\theoremstyle{plain}
\newtheorem{theorem}{Theorem}[section]
\newtheorem*{theorem*}{Theorem}
\newtheorem{lemma}[theorem]{Lemma}
\newtheorem{corollary}[theorem]{Corollary}
\newtheorem{proposition}[theorem]{Proposition}
\theoremstyle{definition}
\newtheorem{definition}[theorem]{Definition}
\newtheorem{example}[theorem]{Example}
\newtheorem{remark}[theorem]{Remark}
\newtheorem{assumption}{Assumption}
\numberwithin{equation}{section}
\DeclareMathOperator{\supp}{supp}
\DeclareMathOperator{\diam}{diam}
\DeclareMathOperator{\dist}{dist}
\DeclareMathOperator{\diverg}{div}
\newcommand{\A}{\mathcal{A}}
\newcommand{\X}{\mathfrak{X}}
\newcommand{\dom}{\mathcal{D}}
\newcommand{\pr}{\mathbf{P}}
\newcommand{\ex}{\mathbf{E}}
\newcommand{\R}{\mathbf{R}}
\newcommand{\M}{\mathcal{M}}
\newcommand{\ind}{\mathbf{1}}
\newcommand{\sub}{\subseteq}
\newcommand{\ph}{\varphi}
\newcommand{\ro}{\varrho}
\newcommand{\eps}{\varepsilon}
\newcommand{\pot}{\mathcal{U}}
\newcommand{\potY}{\mathcal{V}}
\newcommand{\norm}[1]{\left\| #1 \right\|}
\newcommand{\set}[1]{\left\{ #1 \right\}}
\newcommand{\expr}[1]{\left( #1 \right)}
\newcommand{\as}{\text{-a.s.}}
\newcommand{\aevery}{\text{-a.e.}}
\newcommand{\itref}[1]{\ref{#1}}
\begin{document}

%
%

\title{Boundary Harnack inequality \linebreak for Markov processes with jumps}
\author{Krzysztof Bogdan}
\author{Takashi Kumagai}
\author{Mateusz Kwa{\'s}nicki}
\address{Krzysztof Bogdan and Mateusz Kwa{\'s}nicki: Institute of Mathematics \\ Polish Academy of Science \\ ul. {\'S}niadeckich 8 \\ 00-958 Warsaw, Poland, and Institute of Mathematics and Computer Science \\ Wroc{\l}aw University of Technology \\ ul. Wybrze{\.z}e Wyspia{\'n}\-skiego 27 \\ 50-370 Wroc{\l}aw, Poland}
\address{Takashi Kumagai: Research Institute for Mathematical Sciences \\ Kyoto University \\ Kyoto 606-8502, Japan}
\email{krzysztof.bogdan@pwr.wroc.pl, kumagai@kurims.kyoto-u.ac.jp, \newline mateusz.kwasnicki@pwr.wroc.pl}
\thanks{Krzysztof Bogdan was supported in part by grant N N201 397137.}
\thanks{Takashi Kumagai was supported by the Grant-in-Aid for Challenging Exploratory Research 24654033.}
\thanks{Mateusz Kwa\'snicki was supported by the Foundation for Polish Science
and by the Polish Ministry of Science and Higher Education grant N N201 373136.}
\date{\today}
\begin{abstract}
We prove a boundary Harnack inequality for jump-type Markov processes on metric measure state spaces, under comparability estimates of the jump kernel and Urysohn-type property of the domain of the generator of the process. The result holds for positive harmonic functions in arbitrary open sets. It applies, e.g., to many subordinate Brownian motions, L\'evy processes with and without continuous part, stable-like and censored stable processes, jump processes on fractals, and rather general Schr\"odinger, drift and jump perturbations of such processes.
\end{abstract}
\subjclass[2010]{60J50 (Primary), 60J75, 31B05 (Secondary)}
\keywords{Boundary Harnack inequality, jump Markov process}

\maketitle

%
%

\section{Introduction}\label{sec:int}

The boundary Harnack inequality ({BHI}) is a statement about nonnegative functions which are harmonic on an open set and vanish outside the set near a part of its boundary. {BHI} asserts that the functions have a common boundary decay rate. The property requires proper assumptions on the set and the underlying Markov process, ones which secure relatively good communication from near the boundary to the center of the set. By this we mean that the process starting near the boundary visits the center of the set at least as likely as creep far along the boundary before leaving the set.

{BHI} for harmonic functions of the Laplacian $\Delta$ in Lipschitz domains was proved in 1977--78 by B. Dahlberg, A. Ancona and J.-M.~Wu (\cite{MR513885, MR0466593, MR513884}), after a pioneering attempt of J.~Kemper (\cite{MR0293114,MR0422644}). In 1989 R.~Bass and K.~Burdzy proposed an alternative probabilistic proof based on elementary properties of the Brownian motion (\cite{MR1042338}). The resulting `box method' was then applied to more general domains, including H{\"o}lder domains of order $r>1/2$, and to more general second order elliptic operators (\cite{MR1127476, MR1277760}). {BHI} trivially fails for disconnected sets, and counterexamples for H\"older domains with $r {<} 1/2$ are given in \cite{MR1277760}. In 2001--09, H.~Aikawa studied {BHI} for classical harmonic functions in connection to the Carleson estimate and under exterior capacity conditions (\cite{MR1800526, MR2464701, MR2484620}).   

Moving on to nonlocal operators and jump-type Markov processes, in 1997 K. Bogdan proved  {BHI} for the fractional Laplacian $\Delta^{\alpha/2}$ (and the isotropic $\alpha$-stable L\'evy process) for $0<\alpha<2$ and Lipschitz sets (\cite{MR1438304}). In 1999 R.~Song and J.-M.~Wu extended the results to the so-called fat sets (\cite{MR1719233}), and in 2007 K.~Bogdan, T.~Kulczycki and M.~Kwa\'snicki proved {BHI} for $\Delta^{\alpha/2}$ in arbitrary, in particular disconnected, open sets (\cite{MR2365478}). In 2008 P.~Kim, R.~Song and Z.~Vondra\v{c}ek proved {BHI} for subordinate Brownian motions in fat sets (\cite{MR2513121}) and in 2011 extended it to a general class of isotropic L\'evy processes and arbitrary domains (\cite{MR2994122}). Quite recently, BHI for $\Delta+\Delta^{\alpha/2}$ was established by Z.-Q.~Chen, P.~Kim, R.~Song and Z.~Vondra\v{c}ek \cite{0908.1559}. We also like to mention BHI for censored processes \cite{MR2006232, 2007arXiv0705.1614G} by K.~Bogdan, K.~Burdzy, Z.-Q.~Chen and Q.~Guan, and fractal jump processes \cite{MR2792590, MR2261965} by K.~Kaleta, M.~Kwa\'snicki and A.~St\'os.

Generally speaking, 
{BHI} is more a topological issue for diffusion processes, and more a measure-theoretic issue for jump-type Markov processes, which may transport from near the boundary to the center of the set by direct jumps. However, \cite{MR1438304,MR2365478} show in a special setting that such jumps determine the asymptotics of harmonic functions only at those boundary points where the set is rather thin, while at other boundary points the main contribution to the asymptotics comes from gradual `excursions' away from the boundary.

We recall that BHI in particular applies to and may yield an approximate factorization of the Green function. This line of research was completed for Lipschitz domains in 2000 by K.~Bogdan (\cite{MR1741527}) for $\Delta$ and in 2002 by T.~Jakubowski (\cite{MR1991120}) for $\Delta^{\alpha/2}$. It is now a well-established technique (\cite{MR2160104}) and extensions were proved, e.g., for  subordinate Brownian motions by P.~Kim, R.~Song and Z.~Vondra\v{c}ek (\cite{Kim17012012}). We should note that so far the technique is typically restricted to Lipschitz or fat sets. Furthermore, for smooth sets, e.g. $C^{1,1}$ sets, the approximate factorization is usually more explicit. This is so because for smooth sets the decay rate in BHI can often be explicitly expressed in terms of the distance to the {boundary} of the set. The first complete results in this direction were given for $\Delta$ in 1986 by Z.~Zhao (\cite{MR842803}) and for $\Delta^{\alpha/2}$ in 1997  by T.~Kulczycki (\cite{MR1490808}) and in 1998 by Z.-Q.~Chen and R.~Song (\cite{MR1654824}). The estimates are now extended to subordinate Brownian motions, and the renewal function of the subordinator is used in the corresponding formulations (\cite{Kim17012012}). Accordingly, the Green function of smooth sets enjoys approximate factorization for rather general isotropic L\'evy processes (\cite{MR1942325, Kim17012012}). We expect further progress in this direction with applications to perturbation theory via the so-called 3G theorems, and to nonlinear partial differential equations (\cite{springerlink:10.1007/s11118-011-9237-x, MR2160104, MR2294482}). We should also mention estimates and approximate factorization of the Dirichlet heat kernels, which are intensively studied at present. The estimates depend on BHI (\cite{MR2722789}), and reflect the fundamental decay rate in BHI (\cite{MR2677618,MR2807275}).

BHI tends to self-improve and may lead to the existence of the boundary limits of ratios of nonnegative harmonic functions, thanks to oscillation reduction (\cite{MR1042338, MR1438304, MR2365478, MR716504}). The oscillation reduction technique is rather straightforward for local operators. It is more challenging for non-local operators, as it involves subtraction of harmonic functions, which destroys global nonnegativity. The technique requires a certain scale invariance, or uniformity of BHI, and works, e.g., for $\Delta$ in Lipschitz domains  (\cite{MR1042338}) and for $\Delta^{\alpha/2}$ in arbitrary domains (\cite{MR2365478}). We should remark that H\"older continuity of harmonic functions is a similar phenomenon, related to the usual Harnack inequality, and that BHI extends the usual Harnack inequality if, e.g., constant functions are harmonic. H\"older continuity of harmonic functions is crucial in the theory of partial differential equations \cite{MR2735074,MR1918242}, and the existence of limits of ratios of nonnegative harmonic functions leads to the construction of the Martin kernel and to representation of nonnegative harmonic functions (\cite{MR2075671,MR2365478}).

The above summary indicate further directions of research resulting from our development. The main goal of this article is to study the following boundary Harnack inequality. In Section~\ref{sec:amr} we specify notation and assumptions which validate the estimate. 
\medskip
\begin{enumerate}[label={\rm (BHI)},ref={\rm (BHI)}]
\item\label{it:bhi}
Let $x_0 \in \X$, $0 < r < R < R_0$, and let $D \sub B(x_0, R)$ be open. Suppose that nonnegative functions $f, g$ on $\X$ are regular harmonic in $D$ with respect to the process $X_t$, and vanish in $B(x_0, R) \setminus D$. There is $c_{\eqref{eq:ubhi}} = c_{\eqref{eq:ubhi}}(x_0, r, R)$ such that
\begin{align}
\label{eq:ubhi}
 f(x)g(y) & \le c_{\eqref{eq:ubhi}} \, f(y)g(x) \, , & x, y \in B(x_0, r).
\end{align}
\end{enumerate}
\medskip
Here $X_t$ is a Hunt process, having a metric measure space $\X$ as the state space, and $R_0 \in (0, \infty]$ is a localization radius (discussed in Section~\ref{sec:amr}). Also, a nonnegative function $f$ is said to be regular harmonic in $D$ with respect to $X_t$ if 
\begin{align}\label{eq:rharmonicity}
 f(x) & = \ex_x f(X_{\tau_D}) , & x \in D ,
\end{align}
where $\tau_D$ is the time of the first exit of $X_t$ from $D$. To facilitate cross-referencing, in \eqref{eq:ubhi} and later on we let $c_{(i)}$ denote the constant in the displayed formula $(i)$. By $c$ or $c_i$ we denote secondary (temporary) constants in a lemma or a section, and $c = c(a, \dots, z)$, or simply $c(a, \dots, z)$, means a constant $c$ that may be so chosen to depend only on $a, \dots, z$. Throughout the article, all constants are positive.

The present work started with an attempt to obtain \emph{bounded} kernels which reproduce harmonic functions. We were motivated by the so-called regularization of the Poisson kernel for $\Delta^{\alpha/2}$ (\cite{MR1671973}, \cite[Lemma~6]{MR2365478}), which is crucial for the Carleson estimate and BHI for $\Delta^{\alpha/2}$. In the present paper we construct kernels obtained by gradually stopping the Markov process with a specific multiplicative functional before the process approaches the boundary. The construction is the main technical ingredient of our work, and is presented in Section~\ref{sec:reg}. The argument is intrinsically probabilistic and relies on delicate analysis on the path space. At the beginning of Section~\ref{sec:reg} the reader will also find a short informal presentation of the construction. Section~\ref{sec:amr} gives assumptions and auxiliary results. The boundary Harnack inequality (Theorem~\ref{th:bhi}), and the so-called local supremum estimate (Theorem~\ref{th:regexit}) are presented in Section~\ref{sec:bhi}, but the proof of Theorem~\ref{th:regexit} is deferred to Section~\ref{sec:reg}. In Section~\ref{sec:ex} we verify in various settings the scale-invariance of BHI, discuss the relevance of our main assumptions from Section~\ref{sec:amr}, and present many applications, including subordinate Brownian motions, L\'evy processes with or without continuous part, stable-like and censored processes, Schr\"odinger, gradient and jump perturbations, processes on fractals and more.

\section{Assumptions and Preliminaries}
\label{sec:amr}

Let $(\X, d, m)$ be a metric measure space such that all bounded closed sets are compact and $m$ has full support. Let $B(x, r) = \{y \in \X: d(x,y) < r\}$, where $x \in \X$ and $r > 0$. All sets, functions and measures considered in this paper are Borel. Let $R_0 \in (0, \infty]$ (the localization radius) be such that $\X \setminus B(x, 2 r) \ne \varnothing$ for all $x \in \X$ and all $r < R_0$. Let $\X \cup \set{\partial}$ be the one-point compactification of $\X$ (if $\X$ is compact, then we add $\partial$ as an isolated point). Without much mention we extend functions $f$ on $\X$ to $\X \cup \set{\partial}$ by letting $f(\partial) = 0$. In particular, we write $f \in C_0(\X)$ if $f$ is a continuous real-valued function on $\X \cup \set{\partial}$ and $f(\partial) = 0$. If furthermore $f$ has compact support in $\X$, then we write $f \in C_c(\X)$. For a kernel $k(x, dy)$ on $\X$ (\cite{MR939365}) we let $k f(x) = \int f(y) k(x, dy)$, provided the integral makes sense, i.e., $f$ is (measurable and) either nonnegative or absolutely integrable. Similarly, for a kernel density function $k(x, y) \ge 0$, we let $k(x, E) = \int_E k(x, y) m(dy)$ and $k(E, y) = \int_E k(x, y) m(dx)$ for $E \sub \X$. 

Let $(X_t, \zeta, \M_t, \pr_x)$ be a Hunt process with state space $\X$ (see, e.g.,~\cite[I.9]{MR0264757} or~\cite[3.23]{MR0193671}). Here $X_t$ are the random variables, $\M_t$ is the usual right-continuous filtration, $\pr_x$ is the distribution of the process starting from $x \in \X$, and $\ex_x$ is the corresponding expectation. The random variable $\zeta \in (0, \infty]$ is the lifetime of $X_t$, so that $X_t = \partial$ for $t \ge \zeta$. This should be kept in mind when interpreting \eqref{eq:rharmonicity} above, \eqref{eq:semigr} below, etc. The transition operators of $X_t$ are defined by
\begin{align}\label{eq:semigr}
 T_t f(x) & = \ex_x f(X_t), & t \ge 0, \, x \in \X,
\end{align}
whenever the expectation makes sense. We assume that the semigroup $T_t$ is Feller and strong Feller, i.e., for $t > 0$, $T_t$ maps bounded functions into continuous ones and $C_0(\X)$ into $C_0(\X)$. The Feller generator $\A$ of $X_t$ is defined on the set $\dom(\A)$ of all those $f \in C_0(\X)$ for which the limit
\begin{align*}
 \A f(x) & = \lim_{t \searrow 0} \frac{T_t f(x) - f(x)}{t}
\end{align*}
exists uniformly in $x \in \X$. The $\alpha$-potential operator,
\begin{align*}
 \pot_\alpha f(x) & = \ex_x \int_0^\infty f(X_t) e^{-\alpha t} dt = \int_0^\infty e^{-\alpha t} T_t f(x) dt , & \alpha \ge 0, \, x \in \X ,
\end{align*}
is defined whenever the expectation makes sense. We let $\pot = \pot_0$, the potential operator. The kernels of $T_t$, $\pot_\alpha$ and $\pot$ are denoted by $T_t(x, dy)$, $\pot_\alpha(x, dy)$ and $\pot(x, dy)$, respectively.

Recall that a function $f \ge 0$ is called $\alpha$-excessive (with respect to $T_t$) if for all $x \in \X$, $e^{-\alpha t} T_t f(x) \le f(x)$ for $t > 0$, and $e^{-\alpha t} T_t f(x) \to f(x)$ as $t \to 0^+$. When $\alpha = 0$, we simply say that $f$ is excessive.

We enforce a number of conditions, namely Assumptions~\ref{assu:dual}, \ref{assu:gen}, \ref{assu:levy} and~\ref{assu:green} below.
We start with a duality assumption, which builds on our discussion of $X_t$.

\begin{assumption}
\label{assu:dual}
There are Hunt processes $X_t$ and $\hat{X}_t$ which are dual with respect to the measure $m$ (see~\cite[VI.1]{MR0264757} or~\cite[13.1]{MR2152573}). The transition semigroups of $X_t$ and $\hat{X}_t$ are both Feller and strong Feller. Every semi-polar set of $X_t$ is polar.
\end{assumption}

In what follows, objects pertaining to $\hat{X}_t$ are distinguished in notation from those for $X_t$ by adding a hat over the corresponding symbol. For example, $\hat{T}_t$ and $\hat{\pot}_\alpha$ denote the transition and $\alpha$-potential operators of $\hat{X}_t$. The first sentence of Assumption~\ref{assu:dual} means that for all $\alpha > 0$, there are functions $\pot_\alpha(x, y) = \hat{\pot}_\alpha(y, x)$ such that
\begin{align*}
 \pot_\alpha f(x) & = \int_{\X} \pot_\alpha(x, y) f(y) m(dy), & \hat{\pot}_\alpha f(x) & = \int_{\X} \hat{\pot}_\alpha(x, y) f(y) m(dy)
\end{align*}
for all $f \ge 0$ and $x \in \X$, and such that $x \mapsto \pot_\alpha(x, y)$ is $\alpha$-excessive with respect to $T_t$, and $y \mapsto \pot_\alpha(x, y)$ is $\alpha$-excessive with respect to $\hat{T}_t$ (that is, $\alpha$-co-excessive). The $\alpha$-potential kernel $\pot_\alpha(x, y)$ is unique (see \cite[Theorem 13.2]{MR2152573} or remarks after \cite[Proposition~VI.1.3]{MR0264757}). 

The condition in Assumption~\ref{assu:dual} that semi-polar sets are polar is also known as \emph{Hunt's hypothesis} (H). Most notably, it implies that the process $X_t$ never hits irregular points, see, e.g.,~\cite[I.11 and II.3]{MR0264757} or~\cite[Chapter~3]{MR2152573}. The $\alpha$-potential kernel is non-increasing in $\alpha > 0$, and hence the potential kernel $\pot(x, y) = \lim_{t \to 0^+} \pot_\alpha(x, y) \in [0, \infty]$ is well-defined.

We consider an open set $D\subset\X$ and the time of the first exit from $D$ for $X_t$ and $\hat{X}_t$,
\begin{align*}
 \tau_D = \inf \{ t \ge 0 : X_t \notin D \} && \mbox{and} && \hat{\tau}_D = \inf\{ t \ge 0 : \hat{X}_t \notin D \}.
\end{align*}
We define the processes killed at $\tau_D$,
\begin{align*}
 X^D_t = \begin{cases}
  X_t , & \text{if $t < \tau_D$,} \\
  \partial , & \text{if $t \ge \tau_D$,} 
 \end{cases}
 && \mbox{and} && \hat{X}^D_t = 
  \begin{cases}
  \hat{X}_t , & \text{if $t < \hat{\tau}_D$,} \\
  \partial , & \text{if $t \ge \hat{\tau}_D$.}
 \end{cases}
\end{align*} 
We let $T^D_t(x, dy)$ and $\hat{T}^D_t(x, dy)$ be their transition kernels. By~\cite[Remark 13.26]{MR2152573}, $X^D_t$ and $\hat{X}^D_t$ are dual processes with state space $D$. Indeed, for each $x \in D$, $\pr_x\as$ the process $X_t$ only hits regular points of $\X \setminus D$ when it exits $D$. In the nomenclature of~\cite[13.6]{MR2152573}, this means that the left-entrance time and the hitting time of $\X \setminus D$ are equal $\pr^x\as$ for every $x \in D$. In particular, the potential kernel $G_D(x, y)$ of $X^D_t$ exists and is unique, although in general it may be infinite (\cite[pp.~256--257]{MR0264757}). $G_D(x, y)$ is called the Green function for $X_t$ on $D$, and it defines the Green operator $G_D$,
\begin{align*}
 G_D f(x) & = \int_{\X} f(y) G_D(x,y) m(dy) = \ex_x \int_0^{\tau_D} f(X_t) dt , & x \in \X , f \ge 0.
\end{align*}
Note that $\pot(x, y) = G_{\X}(x, y)$. When $X_t$ is symmetric (self-dual) with respect to $m$, then Assumption~\ref{assu:dual} is equivalent to the existence of the $\alpha$-potential kernel $\pot_\alpha(x, y)$ for $X_t$, since then Hunt's hypothesis (H) is automatically satisfied, see~\cite{MR2152573}.

The following Urysohn regularity hypothesis plays a crucial role in our paper, providing enough `smooth' functions on $\X$ to approximate indicator functions of compact sets.

\begin{assumption}
\label{assu:gen}
There is a linear subspace $\dom$ of $\dom(\A) \cap \dom(\hat{\A})$ satisfying the following condition. If $K$ is compact, $D$ is open, and $K \sub D \sub \X$, then there is $f \in \dom$ such that $f(x) = 1$ for $x \in K$, $f(x) = 0$ for $x \in \X \setminus D$, $0 \le f(x) \le 1$ for $x \in \X$, and the boundary of the set $\{x \, : \, f(x) > 0\}$ has measure $m$ zero. We let 
\begin{align}
 \ro(K, D) & = \inf_{f} \sup_{x \in \X} \max(\A f(x), \hat{\A} f(x)),\label{eq:defro}
\end{align}
where the infimum is taken over all such functions $f$. 
\end{assumption}

Thus, nonnegative functions in $\dom(\A) \cap \dom(\hat{\A})$ separate the compact set $K$ from the closed set $\X\setminus D$: there is a Urysohn (bump) function for $K$ and $\X\setminus D$ in the domains. 
Since the supremum in~\eqref{eq:defro} is finite for any $f \in \dom$ and the infimum is taken over a nonempty set, $\varrho(K, D)$ is always finite.

Note that constant functions are not in $\dom(\A)$ nor $\dom(\hat{\A})$ unless $\X$ is compact. In the Euclidean case $\X = \R^d$, $\dom$ can often be taken as the class $C_c^\infty(\R^d)$ of compactly supported smooth functions. The existence of $\dom$ is problematic if $\X$ is more general. However, for the Sierpi{\'n}ski triangle and some other self-similar (p.c.f.) fractals, $\dom$ can be constructed by using the concept of splines on fractals (\cite{MR2792590, MR1765920}). Also, a class of smooth indicator functions was recently constructed in~\cite{MR2465816} for heat kernels satisfying upper sub-Gaussian estimates on $\X$. Further discussion is given in Section~\ref{sec:ex} and Appendix~\ref{sec:app}. Here we note that Assumption~\ref{assu:gen} implies that the jumps of $X_t$ are subject to the following identity, which we call the L\'evy system formula for $X_t$,
\begin{equation}
\label{eq:Ls}
 \ex_x \sum_{s \in [0, t]} f(s, X_{s-}, X_s) = \ex_x \int_0^t \int_\X f(s, X_{s-}, z) \nu(X_{s-}, dz) ds.
\end{equation}
Here $f : [0, \infty) \times \X \times \X \to [0,\infty]$, $f(x, x) = 0$ for all $x \in \X$, and $\nu$ is a kernel on $\X$ (satisfying $\nu(x, \set{x}) = 0$ for all $x \in \X$), called the L\'evy kernel of $X_t$, see~\cite{MR0343375, MR958914, MR0185675}. For more general Markov processes, $ds$ in~\eqref{eq:Ls} is superseded by the differential of a perfect, continuous additive functional, and~\eqref{eq:Ls} defines $\nu(x, \cdot)$ only up to a set of zero potential, that is, for $m$-almost every $x \in \X$. By inspecting the construction in~\cite{MR0343375, MR958914}, and using Assumption~\ref{assu:gen}, one proves in a similar way as in~\cite[Section~5]{MR542886} that the L\'evy kernel $\nu$ satisfies
\begin{align}
\label{eq:lk}
 \nu f(x) & = \lim_{t \searrow 0} \frac{T_t f(x)}{t} \, , && f \in C_c(\X), \, x \in \X \setminus \supp f .
\end{align}
This formula, as opposed to~\eqref{eq:Ls}, defines $\nu(x, dy)$ for all $x \in \X$. With only one exception, to be discussed momentarily, we use~\eqref{eq:lk} and not~\eqref{eq:Ls}, hence we take~\eqref{eq:lk} as the definition of $\nu$. It is easy to see that~\eqref{eq:lk} indeed defines $\nu(x, dy)$: if $f \in \dom(\A)$ and $x \in \X \setminus \supp f$, then $\nu f(x) = \A f(x)$. By Assumption~\ref{assu:gen}, the mapping $f \mapsto \nu f(x)$ is a densely defined, nonnegative linear functional on $C_c(\X \setminus \{x\})$, hence it corresponds to a nonnegative Radon measure $\nu(x, dy)$ on $\X \setminus \{x\}$. As usual, we let $\nu(x, \{x\}) = 0$. {The L{\'e}vy kernel $\hat{\nu}(y, dx)$ for $\hat{X}_t$ is defined in a similar manner. By duality, $\nu(x, dy) m(dx) = \hat{\nu}(y, dx) m(dy)$.}

As an application of \eqref{eq:Ls} we consider the martingale
\begin{align*}
 t \mapsto \sum_{s \in [0, t]} f(s, X_{s-}, X_s) - \int_0^t \int_\X f(s, X_{s-}, z) \nu(X_{s-}, dz) ds,
\end{align*}
where $f(s, y, z) = \ind_A(s) \ind_E(y) \ind_F(z)$. We stop the martingale at $\tau_D$ and we see that
\begin{equation}
\label{eq:iw2}
 \pr_x(\tau_D \in dt, X_{\tau_D-} \in dy, X_{\tau_D} \in dz) = dt \, T^D_t(x, dy) \nu(y, dz),
\end{equation}
on $(0, \infty) \times D \times (\X \setminus D)$. A similar result was first proved in~\cite{MR0142153}. For this reason we refer to~\eqref{eq:iw2} as the Ikeda-Watanabe formula (see also~\eqref{eq:iw} and \eqref{eq:iwe} below). Integrating \eqref{eq:iw2} against $dt$ and $dy$ we obtain
\begin{align}
\label{eq:iwe}
 \pr_x(X_{\tau_D-} \ne X_{\tau_D}, X_{\tau_D} \in E) & = \int_D G_D(x,dy)\nu(y,E), && x \in D, \, E \subset \X \setminus D.
\end{align}

For $x_0 \in \X$ and $0 < r < R$, we consider the open and closed balls $B(x_0, r) = \set{x \in \X : d(x_0, x) < r}$ and $\overline{B}(x_0, r) = \set{x \in \X : d(x_0, x) \le r}$, and the annular regions $A(x_0, r, R) = \set{x \in \X : r < d(x_0, x) < R}$ and $\overline{A}(x_0, r, R) = \set{x \in \X : r \le d(x_0, x) \le R}$. Note that $\overline{B(x_0, r)}$, the closure of $B(x_0,r)$, may be a proper subset of $\overline{B}(x_0, r)$. 

Recall that $R_0$ denotes the localization radius of $\X$. The following assumption is our main condition for the boundary Harnack inequality. It asserts a relative constancy of the density of the L{\'e}vy kernel. This is a natural condition, as seen in Example~\ref{ex:truncated}.

\begin{assumption}
\label{assu:levy}
The L{\'e}vy kernels of the processes $X_t$ and $\hat{X}_t$ have the form $\nu(x, y) m(dy)$ and $\hat{\nu}(x, y) m(dy)$ respectively, where $\nu(x, y) = \hat{\nu}(y, x) > 0$ for all $x, y \in \X$, $x \ne y$. For every $x_0 \in \X$, $0 < r < R < R_0$, $x \in B(x_0, r)$ and $y \in \X \setminus B(x_0, R)$,
\begin{align}
\label{eq:nu}
 c_{\eqref{eq:nu}}^{-1}{\nu(x_0, y)} \le {\nu(x, y)} \le c_{\eqref{eq:nu}}{\nu(x_0, y)} , &&
 c_{\eqref{eq:nu}}^{-1}{\hat\nu(x_0, y)} \le {\hat\nu(x, y)} \le c_{\eqref{eq:nu}}{\hat\nu(x_0, y)} , 
\end{align}
with $c_{\eqref{eq:nu}} = c_{\eqref{eq:nu}}(x_0, r, R)$. 
\end{assumption}

It follows directly from Assumption~\ref{assu:levy} that for $x_0 \in \X$ and $0 < r < R$, 
\begin{equation}
\label{eq:nu2}
 c_{\eqref{eq:nu2}}(x_0, r, R) = \inf_{y \in \overline{A}(x_0, r, R)} \min(\nu(x_0, y), \hat{\nu}(x_0, y)) > 0 
\end{equation} 
where $\overline{A}(x_0, r, R) = \set{x \in \X : r \le d(x_0, x) \le R}$. (Here we do not require that $R < R_0$.) Indeed, we may cover $\overline{A}(x_0, r, R)$ by a finite family of balls $B(y_i, r/2)$, where $y_i \in \overline{A}(x_0, r, R)$. For $y \in B(y_i, r/2)$, $\nu(x_0, y)$ is comparable with $\nu(x_0, y_i)$, and $\hat{\nu}(x_0, y)$ is comparable with $\hat{\nu}(x_0, y_i)$.

\begin{proposition}
\label{prop:tau}
If $x_0 \in \X$ and $0 < r < R_0$, then
\begin{equation}
\label{eq:tau}
 c_{\eqref{eq:tau}}(x_0, r) = \sup_{x \in B(x_0, r)} \max(\ex_x \tau_{B(x_0, r)}, \hat{\ex}_x \hat{\tau}_{B(x_0, r)}) < \infty .
\end{equation}
\end{proposition}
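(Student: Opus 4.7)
The plan is to apply Dynkin's formula to a Urysohn bump function $f$ built via Assumption~\ref{assu:gen}. If $f$ equals one on a compact set $K$ well-separated from $\overline{B}(x_0, r)$ and vanishes on $\overline{B}(x_0, r)$, then for every $x \in B(x_0, r)$ we have $x \notin \supp f$, so $\A f(x) = \nu f(x)$ by the remark following~\eqref{eq:lk}; Assumption~\ref{assu:levy} will then force $\A f(x)$ to be bounded below by a positive constant independent of $x$. The bound on $\ex_x \tau_{B(x_0, r)}$ drops out of a short Dynkin-formula computation, and the argument for $\hat{X}_t$ is identical.

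Concretely, since $r < R_0$, I fix $R \in (r, R_0)$ and, by the definition of the localization radius, a point $y_0 \in \X \setminus B(x_0, 2R)$; set $K = \overline{B}(y_0, R/2)$. Then $K$ is compact because bounded closed sets in $\X$ are compact, and the triangle inequality gives $d(x_0, y) \ge 3R/2 > r$ for every $y \in K$, so $K \sub D := \X \setminus \overline{B}(x_0, r)$. Assumption~\ref{assu:gen} supplies $f \in \dom \sub \dom(\A) \cap \dom(\hat{\A})$ with $f = 1$ on $K$, $f = 0$ on $\overline{B}(x_0, r)$, and $0 \le f \le 1$. Since $K \sub \X \setminus B(x_0, R)$ with $r < R < R_0$, the comparability~\eqref{eq:nu} combined with~\eqref{eq:nu2} applied to an annulus containing $K$ yields a constant $c_0 = c_0(x_0, r) > 0$ such that $\min(\nu(x, y), \hat{\nu}(x, y)) \ge c_0$ for all $x \in B(x_0, r)$ and all $y \in K$. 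Full support of $m$ forces $m(K) > 0$, so integrating the Lévy kernel against $f$ gives $\A f(x), \hat{\A} f(x) \ge c_0 \, m(K) =: c > 0$ for every $x \in B(x_0, r)$.

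Fix $x \in B(x_0, r)$, write $\tau = \tau_{B(x_0, r)}$, and apply Dynkin's formula for the Feller generator $\A$ to $f$ and the bounded stopping time $\tau \wedge n$. Using $f(x) = 0$, $f \le 1$, and the fact that $\A f(X_s) \ge c$ on $\set{s < \tau}$ (since then $X_s \in B(x_0, r)$), one gets $c \, \ex_x(\tau \wedge n) \le \ex_x f(X_{\tau \wedge n}) \le 1$; monotone convergence as $n \to \infty$ then yields $\ex_x \tau \le 1/c$ uniformly in $x \in B(x_0, r)$, and the identical argument for $\hat{X}_t$, $\hat{\A}$, $\hat{\tau}$ supplies the dual estimate. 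The only delicate point is the quantitative lower bound on $\A f$ and $\hat{\A} f$: it uses both parts of Assumption~\ref{assu:levy} --- strict positivity of $\nu, \hat{\nu}$ to invoke~\eqref{eq:nu2} and the comparability~\eqref{eq:nu} to pass from $x_0$ to an arbitrary $x \in B(x_0, r)$ --- and it is the only place where the structural assumptions of the paper enter in an essential way.
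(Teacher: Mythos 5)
Your proof is correct, but it takes a different route from the paper's. The paper applies the Ikeda--Watanabe formula \eqref{eq:iw2} to the survival function $F(t) = \pr_x(\tau_B > t)$: since $\nu(y, \X \setminus B) \ge c > 0$ for all $y \in B$ (by the definition of $R_0$ and \eqref{eq:nu}), one gets the differential inequality $-F'(t) \ge c F(t)$, hence the exponential tail bound $\pr_x(\tau_B > t) \le e^{-ct}$, of which $\ex_x \tau_B \le 1/c$ is a corollary. You instead apply Dynkin's formula \eqref{eq:d} (for the bounded stopping time $\tau \wedge n$, which legitimately sidesteps the circularity of needing $\ex_x\tau < \infty$ in advance) to a Urysohn function from Assumption~\ref{assu:gen} that equals $1$ on a compact set $K$ sitting outside $B(x_0, R)$, and bound $\A f = \nu f \ge c_0\, m(K)$ on $B(x_0,r)$ via \eqref{eq:nu} and \eqref{eq:nu2}; this is essentially the mechanism of Lemma~\ref{lem:escape} run in reverse. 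Your version avoids invoking the L\'evy system/Ikeda--Watanabe machinery and uses Assumption~\ref{assu:gen} more directly, at the cost of an auxiliary bump function and of yielding only the expectation bound rather than the exponential decay of $\pr_x(\tau_B>t)$ that the paper's argument produces as a byproduct. All steps check out, including the dual half of the claim.
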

\begin{proof}
Let $B = B(x_0, r)$, $R \in (r, R_0)$, $x, y \in B$ and $F(t) = \pr_x(\tau_B > t)$. By the definition of $R_0$, $m(\X \setminus B(x_0, R)) > 0$. This and~\eqref{eq:nu} yield $\nu(y, \X \setminus B) \ge \nu(y, \X \setminus B(x_0, R)) \ge (c_{\eqref{eq:nu}}(x_0, r, R))^{-1} \nu(x_0, \X \setminus B(x_0, R)) = c$. By the Ikeda-Watanabe formula~\eqref{eq:iw2},
\begin{align*}
 -F'(t) & = \frac{\pr_x(\tau_B \in dt)}{dt} \ge \frac{\pr_x(\tau_B \in dt, X_{\tau_B-} \ne X_{\tau_B}, X_{\tau_B} \in \X \setminus B)}{dt} \\
 & = \int_\X \nu(y, \X \setminus B) T^B_t(x, dy) \ge c \int_\X T^B_t(x, dy) = c F(t) .
\end{align*}
Hence $\pr_x(\tau_B > t) \le e^{-c t}$. If follows that $\ex_x \tau_B \le 1/c$. 
Considering an analogous argument for $\hat{\ex}_x \hat{\tau}_B$, we see that we may take
\begin{equation*}
 c_{\eqref{eq:tau}}(x_0, r) = \inf_{R \in (r, R_0)} \max\expr{\frac{c_{\eqref{eq:nu}}(x_0, r, R)}{\nu(x_0, \X \setminus B(x_0, R))}, \frac{c_{\eqref{eq:nu}}(x_0, r, R)}{\hat{\nu}(x_0, \X \setminus B(x_0, R))}} . \qedhere
\end{equation*}
\end{proof}

In particular, if $0 < R < R_0$ and $D \sub B(x_0, R)$, then the Green function $G_D(x, y)$ exists (see the discussion following Assumption~\ref{assu:dual}), and for each $x \in \X$ it is finite for all $y$ in $\X$ less a polar set. We need to assume slightly more. The following condition may be viewed as a weak version of Harnack's inequality.

\begin{assumption}
\label{assu:green}
If $x_0 \in \X$, $0 < r < p < R < R_0$ and $B = B(x_0, R)$, then 
\begin{equation}
\label{eq:green}
 c_{\eqref{eq:green}}(x_0, r, p, R) = \sup_{x \in B(x_0, r)} \sup_{y \in \X \setminus B(x_0, p)} \max(G_B(x, y), \hat{G}_B(x, y)) < \infty .
\end{equation}
\end{assumption}

Assumptions~\ref{assu:dual}, \ref{assu:gen}, \ref{assu:levy} and~\ref{assu:green} are tacitly assumed throughout the entire paper. We recall them explicitly only in the statements of BHI and local maximum estimate.

When saying that a statement holds for almost every point of $\X$, we refer to the measure $m$. The following technical result is a simple generalization of \cite[Proposition~II.3.2]{MR0264757}.

\begin{proposition}
\label{prop:aa}
Suppose that $Y_t$ is a standard Markov process such that for every $x \in \X$ and $\alpha > 0$, the $\alpha$-potential kernel $\potY_\alpha(x, dy)$ of $Y_t$ is absolutely continuous with respect to $m(dy)$. Suppose that function $f$ is excessive for the transition semigroup of $Y_t$, and $f$ is not identically infinite. If function $g$ is continuous and $f(x) \le g(x)$ for \emph{almost every} $x \in B(x_0, r)$, then $f(x) \le g(x)$ for \emph{every} $x \in B(x_0, r)$.
\end{proposition}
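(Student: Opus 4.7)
The plan is to adapt the classical proof of Blumenthal and Getoor's Proposition~II.3.2, which asserts that two excessive functions agreeing $m$-a.e.\ agree everywhere under absolute continuity of the $\alpha$-potential, to the \emph{local} setting by replacing $Y_t$ with the process $Y^B_t$ killed upon exiting $B := B(x_0, r)$. Since $\potY^B_\alpha(x, dy) \le \potY_\alpha(x, dy)$, the killed resolvent inherits absolute continuity with respect to $m$, with some density $u^B_\alpha(x, y)$. The representation
\begin{align*}
\alpha \potY^B_\alpha h(x) = \int_B \alpha u^B_\alpha(x, y) h(y) \, m(dy)
\end{align*}
then depends on $h$ only through its $m$-a.e.\ values on $B$. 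Because $\overline{B}$ is compact (bounded closed sets in $\X$ are compact) and $g$ is continuous, $g$ is bounded on $B$ by some $M$, and the hypothesis gives $f \le g \le M$ a.e.\ on $B$. Consequently
\begin{align*}
\alpha \potY^B_\alpha f(x) \le \alpha \potY^B_\alpha g(x), \qquad x \in B, \ \alpha > 0.
\end{align*}

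The core of the argument is to pass to $\alpha \to \infty$. The change of variables $s = \alpha t$ yields
\begin{align*}
\alpha \potY^B_\alpha g(x) = \ex_x \int_0^{\alpha \tau_B} e^{-s} g(Y_{s/\alpha}) \, ds,
\end{align*}
and dominated convergence applies: $\pr_x(\tau_B > 0) = 1$ since $x$ lies in the open set $B$, right-continuity of paths and continuity of $g$ give $g(Y_{s/\alpha}) \to g(x)$ pointwise, and the integrand is bounded by $M$ on $\{s < \alpha \tau_B\}$. Hence $\alpha \potY^B_\alpha g(x) \to g(x)$. The same change of variables applied to $f$, together with Fatou's lemma and fine-continuity of the excessive function $f$ (which yields $f(Y_{s/\alpha}) \to f(x)$ $\pr_x$-a.s.\ for each $s > 0$), produces the matching lower bound $\liminf_\alpha \alpha \potY^B_\alpha f(x) \ge f(x)$. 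Chaining the three inequalities,
\begin{align*}
f(x) \le \liminf_{\alpha \to \infty} \alpha \potY^B_\alpha f(x) \le \lim_{\alpha \to \infty} \alpha \potY^B_\alpha g(x) = g(x).
\end{align*}

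The delicate point is the lower bound $\liminf_\alpha \alpha \potY^B_\alpha f(x) \ge f(x)$. The naive route---showing that $f|_B$ is excessive for $Y^B$ so that $\alpha \potY^B_\alpha f(x) \uparrow f(x)$---is obstructed by the fact that $f$ may be unbounded, or even identically infinite, on $\X \setminus B$, so right-continuity of $T^B_t f(x)$ at $t = 0$ is not immediate from the strong Markov property at $\tau_B$. Invoking fine-continuity of excessive functions of a standard Markov process bypasses this, since one works inside $\{s < \alpha \tau_B\}$, where the trajectories remain in $B$ and only the behavior of $f$ along those trajectories is used. As a byproduct, $f(x) < \infty$ on $B$ comes for free: were $f(x) = \infty$, the Fatou step would force $g(x) = \infty$, contradicting continuity of $g$.
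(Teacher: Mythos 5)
Your proof is correct, but it follows a different route from the paper's. The paper disposes of the statement in three lines via the fine topology: the exceptional set $A=\{x\in B(x_0,r): f(x)>g(x)\}$ has $m$-measure zero, hence (by absolute continuity of $\potY_\alpha(x,dy)$) zero potential, hence its complement is finely dense in $B(x_0,r)$; since $f-g$ is finely continuous (excessive functions of a standard process are finely continuous, and $g$ is continuous), the inequality propagates from the finely dense set to all of $B(x_0,r)$. You instead run a quantitative resolvent argument closer in spirit to the original proof of \cite[Proposition~II.3.2]{MR0264757}: localize by passing to the killed resolvent $\potY^B_\alpha$ (correctly observing that it inherits absolute continuity from $\potY_\alpha$), deduce $\alpha\potY^B_\alpha f\le\alpha\potY^B_\alpha g$ from the a.e.\ inequality, and then let $\alpha\to\infty$, using dominated convergence plus compactness of $\overline{B}(x_0,r)$ for $g$ and Fatou plus a.s.\ right-continuity of $t\mapsto f(Y_t)$ at $t=0$ for $f$. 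Both arguments ultimately rest on the same two inputs (absolute continuity of the resolvent, and fine continuity of excessive functions, which for a standard process is equivalent to the a.s.\ right-continuity you invoke), so neither is deeper than the other; the paper's version is shorter because it quotes the fine-density of complements of zero-potential sets, while yours is more self-contained and makes explicit why the killed resolvent, rather than the global one, is the right object to localize with. Your remark that one cannot simply claim $f|_B$ is excessive for $Y^B$ and use monotone convergence is a fair point of caution, and your Fatou substitute handles it cleanly. One cosmetic point: to have $\potY^B_\alpha g$ well defined without sign worries you may note that $g$ is bounded on the compact set $\overline{B}(x_0,r)$ (and in fact $g\ge 0$ there, since $g\ge f\ge 0$ a.e.\ and $m$ has full support), but this does not affect the argument.
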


\begin{proof}
Let $A = \{x \in B(x_0, r) : f(x) > g(x)\}$. Then $m(A) = 0$, so that $A$ is of zero potential for $Y$. Hence $B(x_0, r) \setminus A$ is finely dense in $B(x_0, r)$. Since $f - g$ is finely continuous, we have $f(x) \le g(x)$ for all $x \in B(x_0, r)$, as desired. (See e.g.~\cite{MR0264757, MR2152573} for the notion of fine topology and fine continuity of excessive functions.)
\end{proof}

If $X_t$ is transient, \eqref{eq:green} often holds even when $G_B$ is replaced by $G_\X = \pot$. In the recurrent case, we can use estimates of $\pot_\alpha$, as follows.

\begin{proposition}
\label{prop:green:pot}
If $x_0 \in \X$, $0 < r < p < R < R_0$, $\alpha > 0$,
\[
 c_1(x_0, r, p, \alpha) = \sup_{x \in B(x_0, r)} \sup_{y \in \X \setminus B(x_0, p)} \max(\pot_\alpha(x, y), \hat{\pot}_\alpha(x, y)) < \infty ,
\]
and $T_t(x, dy) \le c_2(t) m(dy)$ for all $x, y \in \X$, $t > 0$, then in \eqref{eq:green} we may let
\[
 c_{\eqref{eq:green}}(x_0, r, p, R) = \inf_{\alpha, t > 0} \expr{e^{\alpha t} c_1(x_0, r, p, \alpha) + c_2(t) c_{\eqref{eq:tau}}(x_0, R)} .
\]
\end{proposition}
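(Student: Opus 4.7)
The plan is to write $G_B(x,y) = \int_0^\infty T^B_s(x,y)\,ds$, split the integral at an arbitrary time $t > 0$, bound each piece by one of the two terms in the asserted formula, and finally take the infimum over $\alpha, t > 0$. With $B = B(x_0,R)$, the function $G_B(x,y)$ vanishes for $y \notin B$, so the bound is nontrivial only for $y \in B \setminus B(x_0,p)$.

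For the short-time piece, since $0 \le s \le t$ gives $1 \le e^{\alpha(t-s)}$, and $T^B_s(x,y) \le T_s(x,y)$ at the level of densities with respect to $m$ (which exist by the hypothesis $T_s(x,dz) \le c_2(s) m(dz)$), I estimate
\[
 \int_0^t T^B_s(x,y)\,ds \le e^{\alpha t}\int_0^t e^{-\alpha s} T_s(x,y)\,ds \le e^{\alpha t} \pot_\alpha(x,y) \le e^{\alpha t} c_1(x_0,r,p,\alpha).
\]
For the tail, the Markov property of $X^B_t$ applied at time $t$ on $\{t < \tau_B\}$ yields
\[
 \int_t^\infty T^B_s(x,y)\,ds = \int_B G_B(z,y)\, T^B_t(x,dz) \le c_2(t)\int_B G_B(z,y)\,m(dz),
\]
where the last inequality uses $T^B_t(x,dz) \le T_t(x,dz) \le c_2(t) m(dz)$. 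By duality and Fubini together with Proposition~\ref{prop:tau},
\[
 \int_B G_B(z,y)\,m(dz) = \int_B \hat G_B(y,z)\, m(dz) = \hat\ex_y \hat\tau_B \le c_{\eqref{eq:tau}}(x_0,R).
\]

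Summing the two pieces gives the desired bound $G_B(x,y) \le e^{\alpha t} c_1(x_0,r,p,\alpha) + c_2(t) c_{\eqref{eq:tau}}(x_0,R)$, initially for $m$-almost every $y \in B \setminus B(x_0,p)$. The main technical point will be upgrading this to a pointwise estimate: since $y \mapsto G_B(x,y) = \hat G_B(y,x)$ is $\hat X^B_t$-excessive on $B$ and the right-hand side is a constant (hence continuous), Proposition~\ref{prop:aa} applied in a neighborhood of each point of $B\setminus B(x_0,p)$ promotes ``almost every'' to ``every''. The analogous argument with $X$ and $\hat X$ interchanged handles $\hat G_B$. Taking the infimum over $\alpha, t > 0$ then delivers the stated value of $c_{\eqref{eq:green}}(x_0,r,p,R)$.
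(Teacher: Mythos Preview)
Your proof is correct and follows essentially the same route as the paper's: split $G_B = \int_0^\infty T^B_s\,ds$ at time $t$, bound the short-time piece by $e^{\alpha t}\pot_\alpha$, bound the tail using the uniform density bound $T_t(x,dz)\le c_2(t)\,m(dz)$, and upgrade the resulting a.e.\ estimate via Proposition~\ref{prop:aa}. The only minor variation is that for the tail you invoke duality and $\hat\ex_y\hat\tau_B$, whereas the paper uses $\ex_x\tau_B$; both are dominated by $c_{\eqref{eq:tau}}(x_0,R)$, so the conclusion is identical.
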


\begin{proof}
Denote $B = B(x_0, R)$. If $x \in B(x_0,r)$, $t_0 > 0$ and $E \sub B \setminus B(x_0, p)$, then
\begin{align*}
 G_B \ind_E(x) & = \int_0^\infty T^B_t \ind_E(x) dt \\
 & \le e^{\alpha t_0} \int_0^{t_0} e^{-\alpha t} T^B_t \ind_E(x) dt + \int_0^\infty T^B_s (T^B_{t_0} \ind_E)(x) ds \\
 & \le e^{\alpha t_0} \int_0^\infty e^{-\alpha t} T_t \ind_E(x) dt + \expr{\sup_{y \in B} T^B_{t_0} \ind_E(y)} \int_0^\infty T^B_s \ind(y) ds \\
 & \le e^{\alpha t_0} \pot_\alpha \ind_E(x) + \expr{\sup_{y \in B} T_{t_0} \ind_E(y)} G_B \ind(x) \\
 & \le (e^{\alpha t_0} c_1 + c_2 G_B \ind(x)) |E|,
\end{align*}
where $c_1=c_1(x_0, r, p, \alpha)$ and $c_2=c_2(t_0)$. If $y \in B \setminus B(x_0, p)$, then by Proposition~\ref{prop:aa}, 
$G_B(x, y) \le e^{\alpha t_0} c_1 + c_2 G_B \ind(x)$. By Proposition~\ref{prop:tau}, $G_B \ind(x) = \ex_x \tau_B \le c_\eqref{eq:tau}(x_0, R)$. The estimate of $\hat{G}_B(x, y)$ is similar.
\end{proof}

We use the standard notation $\ex_x(Z; E)= \ex_x (Z \ind_E)$. Recall that all functions $f$ on $\X$ are automatically extended to $\X \cup \{\partial\}$ by letting $f(\partial) = 0$. In particular, we understand that $\A f(\partial) = 0$ for all $f \in \dom(\A)$, and $\ex_x \A f(X_\tau) = \ex_x(\A f(X_\tau); \tau < \zeta)$.

The following formula obtained by Dynkin (see \cite[formula~(5.8)]{MR0193671}) plays an important role. If $\tau$ is a Markov time, $\ex_x \tau < \infty$ and $f \in \dom(\A)$, then
\begin{align}
\label{eq:d}
 \ex_x f(X_\tau) & = f(x) + \ex_x \int_0^\tau \A f(X_t) dt, && x \in \X .
\end{align}
If $D \sub B(x_0, R_0)$, $f \in \dom(\A)$ is supported in $\X \setminus D$ and $X_t \in D$ $\pr_y\as$ for $t < \tau$ and $x \in \X$, then
\begin{equation}
\label{eq:iw}
\begin{split}
 \ex_x f(X_\tau) & = \ex_x \int_0^\tau \expr{\int_{\X} \nu(X_t, y) f(y) m(dy)} dt \\
 & = \int_{\X} \ex_x\expr{\int_0^\tau\nu(X_t, y) dt} f(y) m(dy) .
\end{split}
\end{equation}
We note that \eqref{eq:iw} extends to nonnegative functions $f$ on $\X$ which vanish on $\overline{D}$. Indeed, both sides of~\eqref{eq:iw} define nonnegative functionals of $f \in C_0(\X \setminus \overline{D})$, and hence also nonnegative Radon measures on $\X \setminus \overline{D}$. 
By~\eqref{eq:iw}, the two functionals coincide on $\dom \cap C_0(\X \setminus \overline{D})$, and this set is dense in $C_0(\X \setminus \overline{D})$ by the Urysohn regularity hypothesis. This proves that the corresponding measures are equal.
We also note that one cannot in general relax the condition that $f = 0$ on $\overline{D}$. Indeed, even if $m(\partial D) = 0$, $X_\tau$ may hit $\partial D$ with positive probability.

Recall that a function $f \ge 0$ on $\X$ is called regular harmonic in an open set $D \sub \X$ if $f(x) = \ex_x f(X(\tau_D))$ for all $x \in \X$. Here a typical example is $x\mapsto \ex_x \int_0^\infty g(X_t)dt$ if $g\ge 0$ vanishes on $D$.
By the strong Markov property we then have
$f(x) = \ex_x f(X_\tau)$ for all stopping times $\tau\le \tau_D$. Accordingly, we call $f \ge 0$ regular subharmonic in $D$ (for $X_t$), if 
$f(x) \le \ex_x f(X_\tau)$ for all stopping times $\tau\le \tau_D$ and $x\in \X$.
Here a typical example is a regular harmonic function raised to a power $p\ge 1$.
We like to recall that $f \ge 0$ is called harmonic in $D$, if $f(x) = \ex_x f(X(\tau_U))$ for all open and bounded $U$ such that $\overline{U} \sub D$, and all $x \in U$. This condition is satisfied, e.g., by the Green function $G_D(\cdot, y)$ in $D \setminus \{y\}$, and it is weaker than regular harmonicity. In this work however, only the notion of regular harmonicity is used. For further discussion, we refer to \cite{2009arXiv0912.3290C,Hansen2008, MR0193671, MR0150831}.

%
%

\section{Boundary Harnack inequality}
\label{sec:bhi}

Recall that Assumptions~\ref{assu:dual}, \ref{assu:gen}, \ref{assu:levy} and~\ref{assu:green} are in force throughout the entire paper. Some results, however, hold in greater generality. For example, the following Lemma~\ref{lem:escape} relies solely on Assumption~\ref{assu:gen} and \eqref{eq:tau}, and it remains true also when $X_t$ is a diffusion process. Also, Lemma~\ref{lem:factorization} and Corollary~\ref{corr:ubhi} require Assumptions~\ref{assu:gen} and~\ref{assu:levy} but not
~\ref{assu:dual} or \ref{assu:green}.

\begin{lemma}
\label{lem:escape}
If $x_0 \in \X$ and $0 < r < R < \tilde{R}<\infty$, then for all $D \sub B(x_0, R)$ we have
\begin{align}
\label{eq:escape}
 \pr_x(X_{\tau_D} \in \overline{A}(x_0, R, \tilde{R})) & \le c_{\eqref{eq:escape}} \ex_x \tau_D, && x \in B(x_0, r) \cap D,
\end{align}
where $c_{\eqref{eq:escape}} = c_{\eqref{eq:escape}}(x_0, r, R, \tilde{R}) = \inf_{\tilde{r} > \tilde{R}} \ro(\overline{A}(x_0, R, \tilde{R}), A(x_0, r, \tilde{r}))$.
\end{lemma}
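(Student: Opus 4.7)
My plan is to apply Dynkin's formula \eqref{eq:d} to a suitable Urysohn bump function furnished by Assumption~\ref{assu:gen}. Concretely, fix any $\tilde r > \tilde R$ and set
\[
 K = \overline{A}(x_0, R, \tilde R), \qquad U = A(x_0, r, \tilde r).
\]
Since $r < R < \tilde R < \tilde r$, we have $K \subset U$, and $K$ is compact because closed bounded sets in $\X$ are compact. Assumption~\ref{assu:gen} therefore yields, for any $\eps > 0$, a function $f \in \dom$ with $0 \le f \le 1$, $f = 1$ on $K$, $f = 0$ on $\X \setminus U$, and $\sup_\X \A f \le \ro(K, U) + \eps$. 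In particular $f$ vanishes on $\overline{B}(x_0, r)$.

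Next, I would apply the Dynkin formula \eqref{eq:d} with $\tau = \tau_D$. The expectation $\ex_x \tau_D$ is finite: if not, the conclusion \eqref{eq:escape} is trivial, and otherwise \eqref{eq:d} applies. For $x \in B(x_0, r) \cap D$ we have $f(x) = 0$, and because $f \ge 0$ on $\X$ (with $f(\partial) = 0$ by our standing extension convention) and $f \equiv 1$ on $K$,
\[
 \ex_x f(X_{\tau_D}) \ge \pr_x\bigl( X_{\tau_D} \in \overline{A}(x_0, R, \tilde R) \bigr).
\]
On the other hand, Dynkin's formula gives
\[
 \ex_x f(X_{\tau_D}) = \ex_x \int_0^{\tau_D} \A f(X_t) \, dt \le \bigl( \ro(K, U) + \eps \bigr) \ex_x \tau_D.
\]

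Combining the two displays,
\[
 \pr_x\bigl( X_{\tau_D} \in \overline{A}(x_0, R, \tilde R) \bigr) \le \bigl( \ro(K, U) + \eps \bigr) \ex_x \tau_D,
\]
and letting $\eps \searrow 0$ and then taking the infimum over $\tilde r > \tilde R$ yields \eqref{eq:escape} with the asserted constant. The main subtlety I would watch is that $X_{\tau_D}$ may equal the cemetery $\partial$; this is harmless because $f(\partial) = 0$ and because the event in question is automatically contained in $\{\tau_D < \zeta\}$. Apart from that, the argument is essentially the Dynkin-formula bookkeeping, with Assumption~\ref{assu:gen} doing the real work by supplying the bump function whose generator is controlled on all of $\X$.
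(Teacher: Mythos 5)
Your proof is correct and is essentially the paper's own argument: the same Urysohn bump function for $\overline{A}(x_0,R,\tilde R)$ inside $A(x_0,r,\tilde r)$, the same application of Dynkin's formula at $\tau_D$, and the same bound $G_D(\A f)(x)\le \ex_x\tau_D\,\sup_\X\A f$. Your extra remarks on finiteness of $\ex_x\tau_D$ and on the cemetery point are harmless refinements of what the paper leaves implicit.
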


\begin{proof}
We fix an auxiliary number $\tilde{r} > \tilde{R}$ and $x \in B(x_0, r)$.
Let $f \in \dom$ be a bump function from Assumption~\ref{assu:gen} for the compact set $\overline{A}(x_0, R, \tilde{R})$ and the open set $A(x_0, r, \tilde{r})$. Thus, $f \in \dom(\A)$, $f(x) = 0$, $f(y) = 1$ for $y \in \overline{A}(x_0, R, \tilde{R})$ and $0 \le f(y) \le 1$ for all $y \in \X$. By Dynkin's formula~\eqref{eq:d} we have
\begin{align*}
 \pr_x(X_{\tau_D} \in \overline{A}(x_0, R, \tilde{R})) & \le \ex_x(f(X_{\tau_D})) - f(x) 
 = G_D (\A f)(x) \le \smash{G_D \ind(x) \sup_{y \in \X} \A f(y)} .
\end{align*}
Since $G_D \ind(x) = \ex_x \tau_D$, the proof is complete.
\end{proof}

We write $f \approx c g$ if $c^{-1} g \le f \le c g$. We will now clarify the relation between BHI and local supremum estimate.

\begin{lemma}
\label{lem:factorization}
The following conditions are equivalent:
\begin{enumerate}[label={\rm (\alph{*})},ref={\rm (\alph{*})}]
\item\label{it:factorization:a}
If $x_0 \in \X$, $0 < r < R < R_0$, $D \sub B(x_0, R)$ is open, $f$ is nonnegative, regular harmonic in $D$ and vanishes in $B(x_0, R) \setminus D$, then
\begin{align}
\label{eq:upper}
 f(x) & \le c_{\eqref{eq:upper}} \int_{\X \setminus B(x_0, r)} f(y) \nu(x_0, y) m(dy)
\end{align}
for $x \in B(x_0, r) \cap D$, where $c_{\eqref{eq:upper}} = c_{\eqref{eq:upper}}(x_0, r, R)$.
\item\label{it:factorization:b}
If $x_0 \in \X$, $0 < r < p < q < R < R_0$, $D \sub B(x_0, R)$ is open, $f$ is nonnegative, regular harmonic in $D$ and vanishes in $B(x_0, R) \setminus D$, then
\begin{align}
\label{eq:factorization}
 f(x) & \approx c_{\eqref{eq:factorization}} \ex_x(\tau_{D \cap B(x_0, p)}) \int_{\X \setminus B(x_0, q)} f(y) \nu(x_0, y) m(dy)
\end{align}
for $x \in B(x_0, r) \cap D$, where $c_{\eqref{eq:factorization}} = c_{\eqref{eq:factorization}}(x_0, r, p, q, R)$.
\end{enumerate}
In fact, if~\itref{it:factorization:a} holds, then we may let
\begin{align*}
 c_{\eqref{eq:factorization}}(x_0, r, p, q, R) & = c_{\eqref{eq:escape}}(x_0, r, p, q) c_{\eqref{eq:upper}}(x_0, q, R) + c_{\eqref{eq:nu}}(x_0, p, q),
\end{align*}
and if \itref{it:factorization:b} holds, then we may let
\begin{align*}
 c_{\eqref{eq:upper}}(x_0, r, R) & = \inf_{\genfrac{}{}{0pt}{}{p,q}{r < p < q < R}} c_{\eqref{eq:factorization}}(x_0, r, p, q, R) c_{\eqref{eq:tau}}(x_0, R) .
\end{align*}
\end{lemma}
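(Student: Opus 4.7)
The plan is to use the Ikeda-Watanabe formula \eqref{eq:iwe} at the auxiliary set $U = D \cap B(x_0, p)$, together with the L\'evy kernel comparability from Assumption~\ref{assu:levy} and the exit bound from Lemma~\ref{lem:escape}. The direction \itref{it:factorization:b}$\Rightarrow$\itref{it:factorization:a} is pure monotonicity: since $D \cap B(x_0, p) \subset B(x_0, R)$, Proposition~\ref{prop:tau} gives $\ex_x \tau_{D \cap B(x_0, p)} \le c_{\eqref{eq:tau}}(x_0, R)$, while $\X \setminus B(x_0, q) \subset \X \setminus B(x_0, r)$ because $r < q$. Substituting both into the upper half of \eqref{eq:factorization} yields \eqref{eq:upper} with constant at most $c_{\eqref{eq:factorization}}(x_0, r, p, q, R) \, c_{\eqref{eq:tau}}(x_0, R)$, and taking the infimum over admissible $(p, q)$ produces the claimed formula for $c_{\eqref{eq:upper}}(x_0, r, R)$.

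For \itref{it:factorization:a}$\Rightarrow$\itref{it:factorization:b}, fix $x \in B(x_0, r) \cap D$ and put $U = D \cap B(x_0, p)$. Regular harmonicity of $f$ in $D$ and the strong Markov property applied at $\tau_U \le \tau_D$ give $f(x) = \ex_x f(X_{\tau_U})$, since $f(y) = \ex_y f(X_{\tau_D})$ both when $y \in D$ (by \eqref{eq:rharmonicity}) and when $y \notin D$ (trivially, as $\tau_D = 0$). For the lower bound, restrict to $X_{\tau_U} \in \X \setminus B(x_0, q)$; since $X_{\tau_U-} \in \overline{U} \subset \overline{B(x_0, p)}$ and $p < q$, these exits are necessarily jumps into a region disjoint from $\overline{U}$, so \eqref{eq:iwe} applies and, combined with the lower inequality in \eqref{eq:nu}, gives
\begin{align*}
 f(x) & \ge \int_U G_U(x, dz) \int_{\X \setminus B(x_0, q)} f(y) \nu(z, y) m(dy) \\
 & \ge c_{\eqref{eq:nu}}^{-1}(x_0, p, q) \, \ex_x \tau_U \int_{\X \setminus B(x_0, q)} f(y) \nu(x_0, y) m(dy),
\end{align*}
using $\int_U G_U(x, dz) = \ex_x \tau_U$.

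For the upper bound, decompose $f(x) = I_1 + I_2$ according to whether $X_{\tau_U}$ lies in $\X \setminus B(x_0, q)$ or in $B(x_0, q)$. The term $I_1$ is treated as in the lower bound but with the reversed inequality in \eqref{eq:nu}, producing $I_1 \le c_{\eqref{eq:nu}}(x_0, p, q) \, \ex_x \tau_U \int_{\X \setminus B(x_0, q)} f(y) \nu(x_0, y) m(dy)$. For $I_2$, observe that $f$ vanishes on $B(x_0, R) \setminus D$ and that $X_{\tau_U} \notin U$, so the only nonzero contribution comes from $X_{\tau_U} \in D \cap (B(x_0, q) \setminus B(x_0, p)) \subset D \cap B(x_0, q)$. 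On this set, hypothesis \itref{it:factorization:a} applied at the scale $(q, R)$ gives the pointwise bound
\begin{align*}
 f(X_{\tau_U}) \le c_{\eqref{eq:upper}}(x_0, q, R) \int_{\X \setminus B(x_0, q)} f(z) \nu(x_0, z) m(dz).
\end{align*}
Factoring this out and bounding $\pr_x(X_{\tau_U} \in \overline{A}(x_0, p, q)) \le c_{\eqref{eq:escape}}(x_0, r, p, q) \, \ex_x \tau_U$ by Lemma~\ref{lem:escape} (applied to $U \subset B(x_0, p)$ with target annulus $\overline{A}(x_0, p, q)$) yields $I_2 \le c_{\eqref{eq:escape}} \, c_{\eqref{eq:upper}} \, \ex_x \tau_U \int_{\X \setminus B(x_0, q)} f(z) \nu(x_0, z) m(dz)$. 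Summing the estimates for $I_1$ and $I_2$ delivers the announced constant $c_{\eqref{eq:escape}} \, c_{\eqref{eq:upper}} + c_{\eqref{eq:nu}}$.

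The main technical point is the coordination of the four radii $r < p < q < R$: the spacing $p < q$ is needed so that the far target set $\X \setminus B(x_0, q)$ is disjoint from $\overline{U}$ and \eqref{eq:iwe} captures the entire mass of the exit distribution there, while $q < R$ is needed so that hypothesis \itref{it:factorization:a} can be invoked on the intermediate annulus at scale $(q, R)$. Once this ordering is fixed, Assumption~\ref{assu:levy} transports the L\'evy density comparison between the base point $x_0$ and any $z \in U$ uniformly, and the remaining estimates are purely mechanical bookkeeping.
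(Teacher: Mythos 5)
Your proposal is correct and follows essentially the same route as the paper: the easy direction is the same monotonicity argument via Proposition~\ref{prop:tau}, and the hard direction uses the identical decomposition at $\tau_U$ with $U = D \cap B(x_0,p)$, bounding the near part by Lemma~\ref{lem:escape} together with \itref{it:factorization:a} at scale $(q,R)$ and the far part by the Ikeda--Watanabe formula combined with \eqref{eq:nu}, yielding the same constants. The only cosmetic difference is that you assign the sphere $\set{d(x_0,\cdot)=q}$ to the far term rather than the near term, which changes nothing.
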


\begin{proof}
Since $\X\setminus B(x_0,q) \sub \X\setminus B(x_0,r)$ and $\ex_x(\tau_{D \cap B(x_0, p)}) \le \ex_x(\tau_{B(x_0, R)}) \le c_{\eqref{eq:tau}}(x_0, R)$, we see that \itref{it:factorization:b} implies \itref{it:factorization:a} with $c_{\eqref{eq:upper}} = c_{\eqref{eq:factorization}}(x_0, r, p, q, R) c_{\eqref{eq:tau}}(x_0, R)$. Below we prove the converse. Let \itref{it:factorization:a} hold, and $U = D \cap B(x_0, p)$. We have
\begin{align}
\label{eq:decomposition}
 f(x) & = \ex_x(f(X_{\tau_U}) ; X_{\tau_U} \in \overline{B}(x_0, q)) + \ex_x(f(X_{\tau_U}) ; X_{\tau_U} \in \X \setminus \overline{B}(x_0, q)) .
\end{align}
Denote the terms on the right hand side by $I$ and $J$, respectively. By~\eqref{eq:escape} and~\eqref{eq:upper},
\begin{equation}
\label{eq:decomposition:I}
\begin{split}
 0 \le I & \le \pr_x(X_{\tau_U} \in \overline{A}(x_0, p, q)) \sup_{y \in B(x_0, q)} f(y) \\
 & \le c_{\eqref{eq:escape}} c_{\eqref{eq:upper}} \ex_x \tau_U \int_{\X \setminus B(x_0, q)} f(y) \nu(x_0, y) m(dy) ,
\end{split}
\end{equation}
with $c_{\eqref{eq:escape}}(x_0, r, p, q)$ and $c_{\eqref{eq:upper}}(x_0, q, R)$. For $J$, the Ikeda-Watanabe formula~\eqref{eq:iw} yields
\begin{equation}
\label{eq:decomposition:J}
\begin{split}
 J & = \int_{\X \setminus \overline{B}(x_0, q)} \expr{\int_U G_U(x, z) \nu(z, y) f(y) m(dz)} m(dy) \\
 & \approx c_{\eqref{eq:nu}} \int_{\X \setminus \overline{B}(x_0, q)} \expr{\int_U G_U(x, z) \nu(x_0, y) f(y) m(dz)} m(dy) \\
 & = c_{\eqref{eq:nu}} \ex_x \tau_U \int_{\X \setminus \overline{B}(x_0, q)} \nu(x_0, y) f(y) m(dy) ,
\end{split}
\end{equation}
with constant $c_{\eqref{eq:nu}}(x_0, p, q)$. Formula~\eqref{eq:factorization} follows, as we have $c_{\eqref{eq:escape}} c_{\eqref{eq:upper}} + c_{\eqref{eq:nu}}$ in the upper bound and $1 / c_{\eqref{eq:nu}}$ in the lower bound.
\end{proof}

We like to remark that 
BHI boils down to the approximate factorization~\eqref{eq:factorization} of $f(x) = \pr_x(X(\tau_D) \in E)$. We also note that $\pr_x(X(\tau_D) \in E) \approx \nu(x_0, E) \ex_x \tau_D$, if $E$ is far from $B(x_0, R)$, since then $\nu(z, E) \approx \nu(x_0, E)$ in \eqref{eq:iwe}. However, $\nu(z, E)$ in \eqref{eq:iwe} is quite singular and much larger than $\nu(x_0, E)$ if both $z$ and $E$ are close to $\partial B(x_0, R)$. Our main task is to prove that the contribution to~\eqref{eq:iwe} from such points $z$ is compensated by the relatively small time spent there by $X^D_t$ when starting at $x \in D$. In fact, we wish to control~\eqref{eq:iwe} by an integral free from singularities (i.e.~\eqref{eq:upper}), if $x$ and $E$ are not too close.

By substituting~\eqref{eq:factorization} into~\eqref{eq:ubhi}, we obtain the following result.

\begin{corollary}
\label{corr:ubhi}
The conditions \itref{it:factorization:a}, \itref{it:factorization:b} of Lemma~\ref{lem:factorization} imply \itref{it:bhi} with
\begin{align*}
 c_{\eqref{eq:ubhi}}(x_0, r, R) & = \inf_{\genfrac{}{}{0pt}{}{p,q}{r < p < q < R}} (c_{\eqref{eq:factorization}}(x_0, r, p, q, R))^4 . \qed
\end{align*}
\end{corollary}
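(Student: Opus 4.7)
The plan is to substitute the approximate factorization \eqref{eq:factorization} from condition \itref{it:factorization:b} into both sides of \eqref{eq:ubhi} and observe that the $x$- and $y$-dependent factors cancel up to a multiplicative constant. I would fix auxiliary radii $p, q$ with $r < p < q < R$, abbreviate $U = D \cap B(x_0, p)$, $c = c_{\eqref{eq:factorization}}(x_0, r, p, q, R)$, and, for $h \in \{f, g\}$, set $I_h = \int_{\X \setminus B(x_0, q)} h(w) \nu(x_0, w) m(dw)$. Condition \itref{it:factorization:b} then reads
\begin{align*}
 c^{-1} (\ex_z \tau_U) I_h \le h(z) \le c (\ex_z \tau_U) I_h, \qquad z \in B(x_0, r) \cap D, \ h \in \{f, g\}.
\end{align*}

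Multiplying the upper bounds for $f(x)$ and $g(y)$ and, separately, the lower bounds for $f(y)$ and $g(x)$, and chaining the two products through the common quantity $(\ex_x \tau_U)(\ex_y \tau_U) I_f I_g$, I obtain
\begin{align*}
 f(x) g(y) \le c^2 (\ex_x \tau_U)(\ex_y \tau_U) I_f I_g \le c^4 f(y) g(x)
\end{align*}
for every $x, y \in B(x_0, r) \cap D$. For $x$ or $y$ in $B(x_0, r) \setminus D$ both sides of \eqref{eq:ubhi} vanish by the hypothesis on $f$ and $g$. The degenerate cases $I_f = 0$, $I_g = 0$, $\ex_x \tau_U = 0$ or $\ex_y \tau_U = 0$ likewise force one side of \eqref{eq:ubhi} to be zero by the factorization, so the inequality is trivial. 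Taking the infimum over admissible pairs $(p, q)$ with $r < p < q < R$ produces the stated form of $c_{\eqref{eq:ubhi}}$.

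I do not anticipate a substantive obstacle here, as the entire argument is a four-fold algebraic application of \eqref{eq:factorization}. The only things to keep track of are the exponent $4$ arising from two upper and two lower bounds, and the handful of degenerate cases described above, which are handled by direct inspection rather than by the factorization itself.
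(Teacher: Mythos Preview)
Your proposal is correct and matches the paper's approach exactly: the paper simply says ``by substituting~\eqref{eq:factorization} into~\eqref{eq:ubhi}'' and marks the corollary with a \qed, so your explicit write-up of the four-fold application of the factorization, together with the infimum over $p,q$, is precisely what is intended.
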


The main technical result of the paper is the following \emph{local supremum estimate} for sub-harmonic functions, which is of independent interest. The result is proved in Section~\ref{sec:reg}.

\begin{theorem}
\label{th:regexit}
Suppose that Assumptions~\ref{assu:dual}, \ref{assu:gen}, \ref{assu:levy} and~\ref{assu:green} hold true. Let $x_0 \in \X$ and $0 < r < q < R < R_0$, where $R_0$ is the localization radius from Assumptions~\ref{assu:levy} and~\ref{assu:green}. Let function $f$ be nonnegative on $\X$ and regular subharmonic with respect to $X_t$ in $B(x_0, R)$. Then
\begin{align}
\label{eq:regexit1}
 f(x) & \le \int_{\X \setminus B(x_0, q)} f(y) \pi_{x_0, r, q, R}(y) m(dy), && x \in B(x_0, r) ,
\end{align}
where
\begin{align}
\label{eq:regexit2}
 \pi_{x_0, r, q, R}(y) & = \begin{cases} c_{\eqref{eq:regexit3}} \ro & \text{for\/ $y \in B(x_0, R) \setminus B(x_0, q)$,} \\ 2 c_{\eqref{eq:regexit3}} \min(\ro, \hat{\nu}(y, B(x_0, R))) & \text{for\/ $y \in \X \setminus B(x_0, R)$,} \end{cases}
\end{align}
$\ro = \ro(\overline{B}(x_0, q), B(x_0, R))$ (see Assumption~\ref{assu:gen}), and
\begin{align}
\label{eq:regexit3}
 c_{\eqref{eq:regexit3}}(x_0, r, q, R) & = \inf_{p \in (r, q)} \expr{c_{\eqref{eq:green}}(x_0, r, p, R) + \frac{c_{\eqref{eq:tau}}(x_0, R) (c_{\eqref{eq:nu}}(x_0, p, q))^2}{m(B(x_0, p))}} .
\end{align}
\end{theorem}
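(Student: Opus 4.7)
The plan is to bound $f(x)$ by applying subharmonicity at a stopping time whose exit distribution is supported in $\X \setminus B(x_0,q)$, and then estimate the resulting Ikeda--Watanabe density using the three ingredients that appear in the formula for $c_{\eqref{eq:regexit3}}$: the L\'evy-kernel comparability (Assumption~\ref{assu:levy}), the Green function bound (Assumption~\ref{assu:green}), and the Urysohn bump function (Assumption~\ref{assu:gen}). Fix $x \in B(x_0, r)$, an auxiliary radius $p \in (r, q)$, and set $V = B(x_0, p)$, $U = B(x_0, q)$, $B = B(x_0, R)$. Let $\phi \in \dom$ be a bump function separating $\overline B(x_0, q)$ from $\X \setminus B$ with $\A\phi, \hat\A\phi \le \rho + \eps$ for arbitrarily small $\eps > 0$.

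First I would use $f(x) \le \ex_x f(X_{\tau_U})$ (valid since $\tau_U \le \tau_B$ and $f$ is regular subharmonic in $B$) together with~\eqref{eq:iw2} to write $f(x) \le \int_{\X \setminus \overline U} f(y)\, P_U(x,y)\, m(dy)$ with density $P_U(x, y) = \int_U G_U(x, z)\,\nu(z, y)\,m(dz)$. Splitting the $z$-integral at $V$ should produce the two summands of $c_{\eqref{eq:regexit3}}$. On $V$, Assumption~\ref{assu:levy} gives $\nu(z, y) \le c_{\eqref{eq:nu}}(x_0, p, q)\,\nu(x_0, y)$ for $y \in \X \setminus U$, Proposition~\ref{prop:tau} bounds $\int_V G_U(x, z)\, m(dz) \le \ex_x \tau_U \le c_{\eqref{eq:tau}}(x_0, R)$, and the dual integration $\hat\nu(y, V) = \int_V \nu(z, y)\, m(dz) \ge c_{\eqref{eq:nu}}^{-1}\,\nu(x_0, y)\, m(V)$ converts the bound into $(c_{\eqref{eq:nu}})^2\, c_{\eqref{eq:tau}}/m(V) \cdot \hat\nu(y, V)$. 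On $U \setminus V$, Assumption~\ref{assu:green} with big ball $U$ gives $G_U(x, z) \le c_{\eqref{eq:green}}(x_0, r, p, R)$ and the remaining $z$-integral equals $\hat\nu(y, U \setminus V)$.

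For $y \in \X \setminus B$, the bump function then closes the estimate: since $\phi(y) = 0$, one has $\hat\A\phi(y) = \int \hat\nu(y, z)\, \phi(z)\, m(dz) \ge \hat\nu(y, \overline B(x_0, q))$, and letting $\eps \to 0$ yields $\hat\nu(y, V),\,\hat\nu(y, U \setminus V) \le \rho$. Both quantities are also trivially bounded by $\hat\nu(y, B)$, so taking the minimum produces the factor $\min(\rho, \hat\nu(y, B))$ in the second case of~\eqref{eq:regexit2}.

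The principal obstacle is the transition annulus $y \in B \setminus \overline U$, where $\phi(y) \in (0, 1)$ so the clean inequality $\hat\A\phi(y) \ge \hat\nu(y, \overline B(x_0, q))$ fails, and where $\hat\nu(y, U \setminus V)$ is genuinely singular as $y$ approaches $\partial U$. This is precisely the difficulty motivating the kernel-regularization announced in the introduction: rather than stopping abruptly at $\tau_U$, one gradually kills the process by a multiplicative functional built from $\A\phi$ (or an equivalent additive functional obtained from the bump function), which smooths the exit distribution and converts the singular part of $P_U(x, y)$ on $B \setminus \overline U$ into a pointwise bound by $c_{\eqref{eq:regexit3}}\,\rho$. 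I expect this step to be the hard part, requiring the delicate path-space analysis to which Section~\ref{sec:reg} is devoted; I would pattern the resulting estimate on the exterior case above, with the multiplicative functional standing in for the plain stopping time $\tau_U$.
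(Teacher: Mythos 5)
Your plan correctly identifies the overall mechanism (gradual killing via a multiplicative functional built from the Urysohn bump function), and your computation for $y \in \X \setminus B(x_0,R)$ — splitting $\int G(x,z)\nu(z,y)\,m(dz)$ at $B(x_0,p)$, using $c_{\eqref{eq:nu}}$ and $c_{\eqref{eq:tau}}$ on the inner ball and $c_{\eqref{eq:green}}$ on the annulus, then converting to $\min(\ro,\hat\nu(y,B(x_0,R)))$ via $\hat\A\ph(y) \ge \hat\nu(y,\overline B(x_0,q))$ — is essentially the paper's Lemma~\ref{lem:pipsiest} for that range of $y$. But there are two genuine gaps.

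First, your preliminary inequality $f(x) \le \int_{\X\setminus\overline U} f(y) P_U(x,y)\,m(dy)$ with $U = B(x_0,q)$ is false in general: $\ex_x f(X_{\tau_U})$ also contains the contribution from the event $X_{\tau_U-} = X_{\tau_U}$, where the process lands on $\partial B(x_0,q)$; the Ikeda--Watanabe density sees only the jump exits, and the dropped term has the wrong sign for your inequality. This is not a removable technicality — it is one of the reasons the plain stopping time must be replaced. The paper has to prove separately (Lemma~\ref{lem:pipsiint} and Corollary~\ref{corr:boundary2}) that the regularized kernel $\pi_\psi(x,\cdot)$ does not charge $\partial V$.

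Second, and decisively, the first case of~\eqref{eq:regexit2} — the bounded density $c_{\eqref{eq:regexit3}}\ro$ for $y \in B(x_0,R)\setminus B(x_0,q)$ — is the entire content of the theorem, and your proposal defers it wholesale to an unspecified ``delicate path-space analysis,'' to be obtained by ``patterning'' on the exterior case. It cannot be so obtained: the exterior estimate rests on $\hat\A\ph(y) \ge \hat\nu(y,\overline B(x_0,q))$, which, as you note, fails on the annulus. The paper's actual mechanism is different in kind. One chooses $\psi = \max(\A\ph,\hat\A\ph,\delta(1-\ph))/\ph$, which simultaneously guarantees $(\A-\psi)\ph \le 0$, $(\hat\A-\psi)\ph \le 0$, $\psi\ph \le \delta$, and $\psi \ge \delta/\ph - \delta$ (so that all mass is lost before $\tau_V$, Lemma~\ref{lem:boundary1}). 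The superharmonicity of $\ph$ for $\A - \psi$ yields $\pi^\psi_{V\setminus\overline B(x_0,q)}(x,\overline B(x_0,q)) \le \ph(x)$ (Lemma~\ref{lem:pipsiuest}), and from this one proves the key Green function bound $G_\psi(x,y) \le c_{\eqref{eq:gpsiest}}\,\ph(x)$ for $x$ outside $B(x_0,p)$ and $y \in B(x_0,r)$ (Lemma~\ref{lem:gpsi}); by duality, $G_\psi(x,y) \le c_{\eqref{eq:gpsiest}}\,\ph(y)$ for $x \in B(x_0,r)$. Since on $V$ the regularized kernel has density $G_\psi(x,y)\psi(y)$ and $\ph(y)\psi(y) \le \delta$, the bound $c_{\eqref{eq:regexit3}}\ro$ on the annulus follows. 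None of these steps — the specific choice of $\psi$, the superharmonicity argument, the Green function estimate, or the duality that transfers it from the $x$-variable to the $y$-variable — appears in your proposal, so the hard case remains unproven.
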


Theorem~\ref{th:regexit} (to be proved in the next section) and Corollary~\ref{corr:ubhi} lead to BHI. We note that no regularity of the open set $D$ is assumed.

\begin{theorem}
\label{th:bhi}
If assumptions~\ref{assu:dual}, \ref{assu:gen}, \ref{assu:levy} and~\ref{assu:green} are satisfied, then~\itref{it:bhi} holds true with
\begin{align}
\label{eq:ubhic1}
 c_{\eqref{eq:ubhi}}(x_0, r, R) =
 \makebox[2.5em][c]{$\displaystyle \inf_{\genfrac{}{}{0pt}{}{p,q,\tilde{r}}{r < p < q < R < \tilde{r}}}$}
 \expr{\ro(\overline{A}(x_0, p, q), A(x_0, r, \tilde{r})) c_{\eqref{eq:ubhic2}}(x_0, q, R) + c_{\eqref{eq:nu}}(x_0, p, q)}^4 \! ,
\end{align}
\begin{align}
\label{eq:ubhic2}
\begin{split}
 c_{\eqref{eq:ubhic2}}(x_0, q, R) & =
 \makebox[2.5em][c]{$\displaystyle \inf_{\genfrac{}{}{0pt}{}{\tilde{q},\tilde{R}}{q < \tilde{q} < R < \tilde{R}}}$}
 2 c_{\eqref{eq:regexit3}}(x_0, q, \tilde{q}, R) \times \\
 & \quad \times \max \expr{\frac{\ro(\overline{B}(x_0, \tilde{q}), B(x_0, R))}{c_{\eqref{eq:nu2}}(x_0, \tilde{q}, \tilde{R})}, c_{\eqref{eq:nu}}(x_0, R, \tilde{R}) m(B(x_0, R))} .
\end{split}
\end{align}
\end{theorem}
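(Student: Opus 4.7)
The plan is to deduce Theorem~\ref{th:bhi} from Corollary~\ref{corr:ubhi} by verifying condition~\itref{it:factorization:a} of Lemma~\ref{lem:factorization} — i.e., the local supremum estimate~\eqref{eq:upper} — with the constant dominated by $c_{\eqref{eq:ubhic2}}$. The deep input will be Theorem~\ref{th:regexit}; once it is in hand, the remaining task is a pointwise comparison of the kernel $\pi$ it produces against the L\'evy density $\nu(x_0,\cdot)$, together with a careful choice of auxiliary radii.

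The first step will be to check that, under the hypotheses of Lemma~\ref{lem:factorization}\itref{it:factorization:a}, the function $f$ is regular subharmonic in $B = B(x_0,R)$ for $X_t$. For $x \in D$ and any stopping time $\tau \le \tau_B$, setting $\sigma = \tau \wedge \tau_D$, the strong Markov property combined with the regular harmonicity of $f$ in $D$ gives $f(x) = \ex_x f(X_\sigma)$. On $\set{\tau \le \tau_D}$ one has $X_\sigma = X_\tau$, while on $\set{\tau > \tau_D}$ necessarily $\tau_D < \tau_B$, which forces $X_{\tau_D} \in B$ (otherwise $\tau_B \le \tau_D$); since $X_{\tau_D} \in \X \setminus D$, this places $X_{\tau_D}$ in $B \setminus D$, where $f$ vanishes. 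Hence $f(x) = \ex_x[f(X_\tau); \tau \le \tau_D] \le \ex_x f(X_\tau)$, and for $x \in \X \setminus D$ the inequality is trivial since $f(x) = 0$.

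Next, for any auxiliary radii $q < \tilde{q} < R < \tilde{R} < R_0$, I will apply Theorem~\ref{th:regexit} with parameters $(x_0, q, \tilde{q}, R)$ to obtain, for every $x \in B(x_0, q)$,
\[
 f(x) \le \int_{\X \setminus B(x_0, \tilde{q})} f(y) \, \pi(y) \, m(dy), \qquad \pi = \pi_{x_0, q, \tilde{q}, R}.
\]
I will then split $\X \setminus B(x_0, \tilde{q})$ into the annular region $B(x_0, \tilde{R}) \setminus B(x_0, \tilde{q})$ and the far region $\X \setminus B(x_0, \tilde{R})$. On the annular region, \eqref{eq:nu2} yields $\nu(x_0, y) \ge c_{\eqref{eq:nu2}}(x_0, \tilde{q}, \tilde{R})$, so by~\eqref{eq:regexit2},
\[
 \pi(y) \le 2 c_{\eqref{eq:regexit3}} \ro(\overline{B}(x_0, \tilde{q}), B(x_0, R)) \le \frac{2 c_{\eqref{eq:regexit3}} \ro(\overline{B}(x_0, \tilde{q}), B(x_0, R))}{c_{\eqref{eq:nu2}}(x_0, \tilde{q}, \tilde{R})} \nu(x_0, y).
\]
On the far region, Assumption~\ref{assu:levy} gives $\nu(z, y) \le c_{\eqref{eq:nu}}(x_0, R, \tilde{R}) \nu(x_0, y)$ for every $z \in B(x_0, R)$, whence $\hat\nu(y, B(x_0, R)) \le c_{\eqref{eq:nu}}(x_0, R, \tilde{R}) m(B(x_0, R)) \nu(x_0, y)$, and the $\hat\nu$-branch of~\eqref{eq:regexit2} yields
\[
 \pi(y) \le 2 c_{\eqref{eq:regexit3}} c_{\eqref{eq:nu}}(x_0, R, \tilde{R}) m(B(x_0, R)) \nu(x_0, y).
\]
Taking the maximum of the two prefactors produces $\pi(y) \le c_{\eqref{eq:ubhic2}}(x_0, q, R) \nu(x_0, y)$, and since $\X \setminus B(x_0, \tilde{q}) \sub \X \setminus B(x_0, q)$, this establishes~\itref{it:factorization:a} at inner radius $q$ with $c_{\eqref{eq:upper}}(x_0, q, R) \le c_{\eqref{eq:ubhic2}}(x_0, q, R)$.

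Finally, substituting this bound into $c_{\eqref{eq:factorization}} = c_{\eqref{eq:escape}} c_{\eqref{eq:upper}} + c_{\eqref{eq:nu}}$ from Lemma~\ref{lem:factorization}, applying Corollary~\ref{corr:ubhi} (which raises the result to the fourth power), and taking the infimum over $r < p < q < R < \tilde{r}$ delivers~\eqref{eq:ubhic1}; the constraint $\tilde{r} > R$ there merely tightens the infimum in the definition of $c_{\eqref{eq:escape}}$ (which originally ranges over $\tilde{r} > q$) and therefore preserves the validity of the upper bound. The serious obstacle in the whole approach is Theorem~\ref{th:regexit} itself — deferred to Section~\ref{sec:reg}, whose proof requires the delicate path-space construction previewed in the introduction — whereas in the present reduction the only subtle points are the regular subharmonicity argument and the bookkeeping of annular regions when comparing $\pi$ to $\nu(x_0,\cdot)$.
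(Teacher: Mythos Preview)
Your proposal is correct and follows essentially the same route as the paper's proof: apply Theorem~\ref{th:regexit}, split $\X \setminus B(x_0,\tilde q)$ into an annular piece (handled via $c_{\eqref{eq:nu2}}$) and a far piece (handled via the $\hat\nu$-branch of~\eqref{eq:regexit2} and Assumption~\ref{assu:levy}), obtain~\itref{it:factorization:a} with constant $c_{\eqref{eq:ubhic2}}$, and conclude via Corollary~\ref{corr:ubhi}. Your explicit verification that $f$ is regular subharmonic in $B(x_0,R)$, and your remark that restricting the infimum in $c_{\eqref{eq:escape}}$ to $\tilde r > R$ only increases the bound, are details the paper omits but which are welcome additions.
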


\begin{proof}
We only need to prove condition~\itref{it:factorization:a} of Lemma~\ref{lem:factorization} 
with $c_{\eqref{eq:upper}}$ equal to $c_{\eqref{eq:ubhic2}}=c_{\eqref{eq:ubhic2}}(x_0, r, R)$ given above. By~\eqref{eq:regexit1} and~\eqref{eq:regexit2} of Theorem~\ref{th:regexit}, it suffices to prove that 
$
\inf_{q \in (r, R)}\sup_{y\in \X\setminus B(x_0,q)} 
\pi_{x_0, r, q, R}(y)/\nu(x_0, y) \le c_{\eqref{eq:ubhic2}}$. 
For $y \in \overline{A}(x_0, q, \tilde{R})$ we have
\begin{align*}
 \pi_{x_0, r, q, R}(y) & \le 2 c_{\eqref{eq:regexit3}} \ro \le \frac{2 c_{\eqref{eq:regexit3}} \ro}{c_{\eqref{eq:nu2}}} \, \nu(x_0, y) ,
\end{align*}
with $c_{\eqref{eq:regexit3}} = c_{\eqref{eq:regexit3}}(x_0, r, q, R)$, $\ro = \ro(\overline{B}(x_0, q), B(x_0, R))$ and $c_{\eqref{eq:nu2}} = c_{\eqref{eq:nu2}}(x_0, q, \tilde{R})$.
If $y \in \X \setminus \overline{B}(x_0, \tilde{R})$, then
\begin{align*}
 \pi_{x_0, r, q, R}(y) & \le 2 c_{\eqref{eq:regexit3}} \hat{\nu}(y, B(x_0, R)) \le 2 c_{\eqref{eq:regexit3}} c_{\eqref{eq:nu}} m(B(x_0, R)) \nu(x_0, y) ,
\end{align*}
with $c_{\eqref{eq:regexit3}}$ as above and $c_{\eqref{eq:nu}} = c_{\eqref{eq:nu}}(x_0, R, \tilde{R})$. The proof is complete.
\end{proof}

\begin{remark}
\label{rem:si}
\itref{it:bhi} is said to be \emph{scale-invariant} if $c_{\eqref{eq:ubhi}}$ may be so chosen to depend on $r$ and $R$ only through the ratio $r/R$. In some applications, the property plays a crucial role, see, e.g., \cite{MR1127476, MR2365478}. If $X_t$ admits \emph{stable-like scaling}, then $c_{\eqref{eq:ubhi}}$ given by~\eqref{eq:ubhic1} is scale-invariant indeed, as explained in Section~\ref{sec:ex} (see Theorem~\ref{th:sibhi}).
\end{remark}

\begin{remark}
The constant $c_{\eqref{eq:ubhi}}$ in Theorem~\ref{th:bhi} depends only on basic characteristics of $X_t$. Accordingly, in Section~\ref{sec:ex} it is shown that BHI is stable under small perturbations.
\end{remark}

\begin{remark}
BHI applies in particular to hitting probabilities: if $0 < r < R < R_0$, $x, y \in B(x_0, r) \cap D$ and $E_1, E_2 \sub \X \setminus B(x_0, R)$, then
\begin{align*}
 {\pr_x(X_{\tau_D} \in E_1)}{\pr_y(X_{\tau_D} \in E_2)} & \le c_{\eqref{eq:ubhi}} \, {\pr_y(X_{\tau_D} \in E_1)}{\pr_x(X_{\tau_D} \in E_2)} .
\end{align*}
\end{remark}

\begin{remark}
BHI implies the usual Harnack inequality if, e.g., constants are harmonic.
\end{remark}

The approach to BHI via approximate factorization was applied to isotropic stable processes in~\cite{MR2365478}, to stable-like subordinate diffusion on the Sierpi\'nski gasket in~\cite{MR2792590}, and to a wide class of isotropic L\'evy processes in~\cite{MR2994122}. In all these papers, the taming of the intensity of jumps near the boundary was a crucial step. This parallels the connection of the Carleson estimate and BHI in the classical potential theory, see Section~\ref{sec:int}.

%
%

\section{Regularization of the exit distribution}
\label{sec:reg}

In this section we prove Theorem~\ref{th:regexit}. The proof is rather technical, so we begin with a few words of introduction and an intuitive description of the idea of the proof.

In~\cite[Lemma~6]{MR2365478}, an analogue of Theorem~\ref{th:regexit} was obtained for the isotropic $\alpha$-stable L{\'e}vy processes by averaging harmonic measure of the ball against the variable radius of the ball.  The procedure yields a kernel with no singularities and a 
mean value property for 
harmonic functions. In the setting of \cite{MR2365478} the boundedness of the kernel follows from the explicit formula and bounds for the harmonic measure of a ball.  A similar argument is classical for 
harmonic functions of the Laplacian and the Brownian motion. For more general processes $X_t$ this approach is problematic: while the Ikeda-Watanabe formula gives precise bounds for the harmonic measure far from the ball, satisfactory estimates near the boundary of the ball require exact decay rate of the Green function, which is generally unavailable. In fact, resolved cases indicate that sharp estimates of the Green function are equivalent to BHI (\cite{MR1741527}), hence not easier to obtain. Below we use a different method to mollify the harmonic measure.

Recall that the harmonic measure of $B$ is the distribution of $X(\tau_B)$. It may be interpreted as the mass lost by a particle moving along the trajectory of $X_t$, when it is killed at the moment $\tau_B$. In the present paper we let the particle lose the mass \emph{gradually} before time $\tau_B$, with intensity $\psi(X_t)$ for a suitable function $\psi \ge 0$ sharply increasing at $\partial B$. The resulting distribution of the lost mass defines a kernel with a mean value property for harmonic functions, and it is less singular than the distribution of $X(\tau_B)$. 

\medskip

Throughout this section, we fix $x_0 \in \X$ and four numbers $0 < r < p < q < R < R_0$, where $R_0$ is defined in Assumptions~\ref{assu:levy} and~\ref{assu:green}. For the compact set $\overline{B}(x_0, q)$ and the open set $B(x_0, R)$ we consider the bump function $\ph$ provided by Assumption~\ref{assu:gen}. We let
\begin{equation}
\label{eq:ddelta}
 \delta = \sup_{x \in \X} \max(\A \ph(x), \hat{\A} \ph(x)),
\end{equation}
and 
\begin{equation}
\label{eq:dV}
 V = \set{x \in \X : \ph(x) > 0}.
\end{equation}
We have $V \sub B(x_0, R)$, see Figure~\ref{fig:1}. By Assumption~\ref{assu:gen}, $m(\partial V) = 0$. Note that $\A \ph(x) \le 0$ and $\hat{\A} \ph(x) \le 0$ if $x \in B(x_0, q)$, and $\delta$ can be arbitrarily close to $\ro(\overline{B}(x_0, q), B(x_0, R))$.

\begin{figure}
\centering
\def\svgwidth{0.5\textwidth}
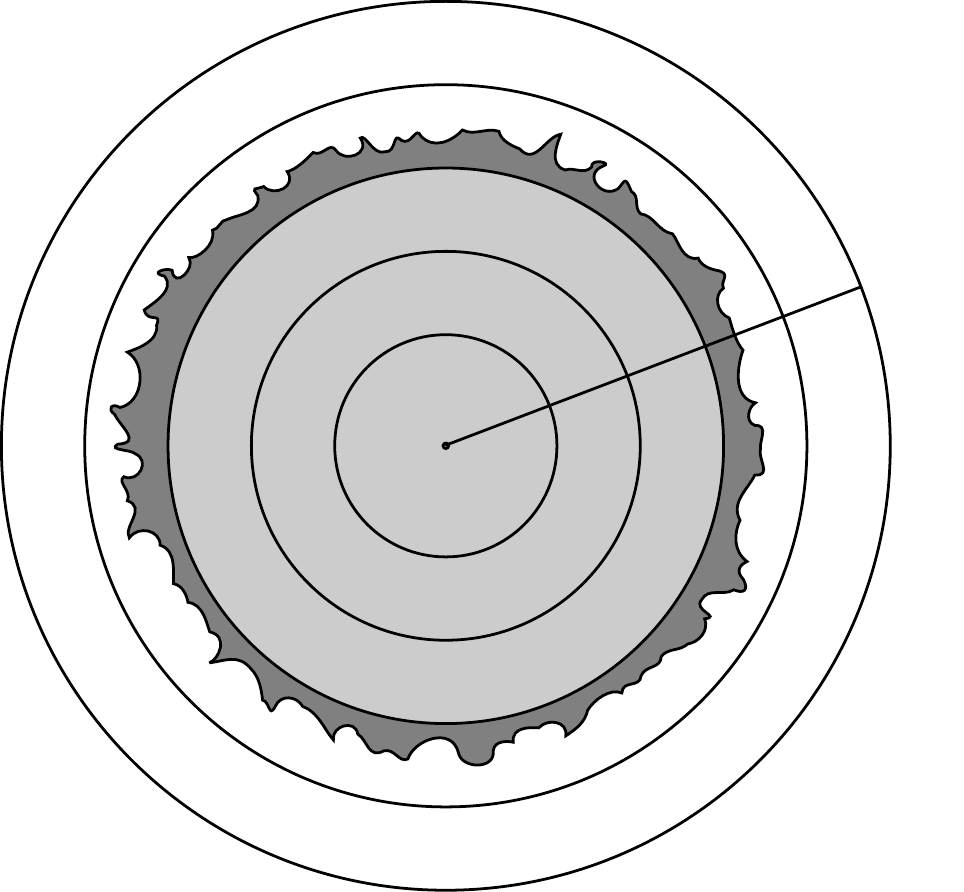
\caption{Notation for Section~\ref{sec:reg}.}
\label{fig:1}
\end{figure}

We consider a function $\psi: \X \cup \{ \partial \} \to [0, \infty]$ continuous in the extended sense and such that $\psi(x) = \infty$ for $x \in (\X \setminus V) \cup \set{\partial}$, and $\psi(x) < \infty$ when $x \in V$. Let
\begin{align}
\label{eq:m}
 A_t & = \lim_{\eps \searrow 0} \int_0^{t + \eps} \psi(X_s) ds , && t \ge 0 .
\end{align}
We see that $A_t$ is a right-continuous, strong Markov, nonnegative (possibly infinite) additive functional, and $A_t = \infty$ for $t \ge \zeta$. We define the right-continuous multiplicative functional 
\begin{align*}
 M_t = e^{-A_t}.
\end{align*}
For $a \in [0, \infty]$, we let $\tau_a$ be the first time when $A_t \ge a$. In particular, $\tau_\infty$ is the time when $A_t$ becomes infinite. Note that $A_t$ and $M_t$ are continuous except perhaps at the single (random) moment $\tau_\infty$ when $A_t$ becomes infinite and the left limit $A(\tau_\infty-)$ is finite. Since $A_t$ is finite for $t < \tau_V$, we have $\tau_\infty \ge \tau_V$. If $\psi$ grows sufficiently fast near $\partial V$, then in fact $\tau_\infty = \tau_V$, as we shall see momentarily.

\begin{lemma}
\label{lem:boundary1}
If $c_1, c_2 > 0$ are such that $\psi(x) \ge c_1 (\ph(x))^{-1} - c_2$ for all $x \in V$, then $A(\tau_V) = \infty$ and $M(\tau_V) = 0$ $\pr_x\as$ for every $x \in \X$. In particular, $\tau_V = \tau_\infty$.
\end{lemma}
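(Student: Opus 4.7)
My plan is to reduce the claim to two complementary regimes by the position of $X_{\tau_V}$, observing that the last assertion $\tau_V=\tau_\infty$ is automatic once $A_{\tau_V}=+\infty$ is established: on $\{t<\tau_V\}$ the trajectory stays in $V$, where $\psi$ is continuous and finite, so the càdlàg image of $[0,t]$ is a relatively compact subset of $V$ on which $\psi$ is bounded, forcing $A_t<\infty$.

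First, on the event $\{X_{\tau_V}\in\X\setminus\overline V\}\cup\{X_{\tau_V}=\partial\}$ the conclusion is immediate. By right-continuity of $s\mapsto X_s$ and openness of $\X\setminus\overline V$ (respectively the convention $X_s=\partial$ for $s\ge\zeta$), there is a random $\eps>0$ such that $X_s\in\X\setminus V$ throughout $[\tau_V,\tau_V+\eps]$. Since $\psi=+\infty$ on $\X\setminus V$, the right-limit built into the definition~\eqref{eq:m} of $A_t$ forces $A_{\tau_V}=+\infty$ at once.

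The remaining event is $\{X_{\tau_V}\in\partial V\}$, where the divergence of $A$ must be extracted from the quantitative lower bound on $\psi$. Here I would apply It\^o's product rule on $[0,\tau_V)$ to $\varphi(X_t)M_t$: since $M_t$ is continuous of finite variation with $dM_t=-\psi(X_t)M_t\,dt$, and $\varphi\in\dom(\A)$ admits the Dynkin decomposition $\varphi(X_t)=\varphi(x)+\int_0^t\A\varphi(X_s)\,ds+N_t$ for a local martingale $N$, one obtains
\[
 \varphi(X_t)M_t=\varphi(x)+\int_0^t M_s\,dN_s+\int_0^t M_s\bigl(\A\varphi(X_s)-\psi(X_s)\varphi(X_s)\bigr)ds.
\]
The hypothesis rewrites as $\psi\varphi\ge c_1-c_2\varphi$, and combined with $\A\varphi\le\delta$ and $\varphi\le 1$ the drift coefficient is majorized by $-(c_1-c_2-\delta)M_s$. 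Localizing along a sequence $\tau_n\nearrow\tau_V$ and taking expectations, in the regime $c_1>c_2+\delta$ this gives $\ex_x\int_0^{\tau_V}M_s\,ds\le\varphi(x)/(c_1-c_2-\delta)$. Combined with the pathwise identity $1-M_{\tau_V-}=\int_0^{\tau_V}\psi(X_s)M_s\,ds$ and the hypothesis $\psi\ge c_1/\varphi - c_2$, this forces $M_{\tau_V-}=0$ almost surely on the boundary-approach event.

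The residual regime $c_1\le c_2+\delta$ is handled by strong-Markov localization at $\sigma_\eta=\inf\{t>0:\varphi(X_t)<\eta\}$: for $\eta<c_1/(2c_2)$ the bound strengthens to $\psi\ge c_1/(2\varphi)$ on $\{\varphi<\eta\}$, so by choosing $\eta$ small the effective coefficient in front of $1/\varphi$ can be made to dominate $c_2+\delta$, reducing to the previous subcase on the excursion after $\sigma_\eta$. The hard part will be justifying It\^o's product rule rigorously in view of the potentially unbounded integrand $\psi(X_s)\varphi(X_s)M_s$ as $s\nearrow\tau_V$, and then patching together the two regimes; both should yield to a monotone-convergence argument along the localizing sequence, using the fact that the drift eventually becomes nonpositive once $\varphi(X_s)$ drops below $c_1/(c_2+\delta)$.
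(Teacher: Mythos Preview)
Your proposal has a genuine gap at the crucial step on the boundary-approach event. You obtain $\ex_x\int_0^{\tau_V}M_s\,ds\le\ph(x)/(c_1-c_2-\delta)$ and then claim that this, combined with the pathwise identity $1-M_{\tau_V-}=\int_0^{\tau_V}\psi(X_s)M_s\,ds$ and the lower bound on $\psi$, forces $M_{\tau_V-}=0$. But it does not. The identity gives $1-M_{\tau_V-}\ge c_1\int_0^{\tau_V}M_s/\ph(X_s)\,ds-c_2\int_0^{\tau_V}M_s\,ds$; to conclude $M_{\tau_V-}=0$ you would need $\int_0^{\tau_V}M_s/\ph(X_s)\,ds=\infty$, yet on the event $\{M_{\tau_V-}>0\}$ this is equivalent to $\int_0^{\tau_V}\ph(X_s)^{-1}\,ds=\infty$, which in turn (since $\tau_V<\infty$ a.s.) is equivalent to $A_{\tau_V-}=\infty$, i.e.\ $M_{\tau_V-}=0$. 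The argument is circular. The finiteness of $\ex_x\int_0^{\tau_V}M_s\,ds$ plays no role in breaking the circle.

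Your localization step is also problematic. Restricting to $\{\ph<\eta\}$ with $\eta<c_1/(2c_2)$ does give $\psi\ge c_1/(2\ph)$ there, so the new constants are $c_1'=c_1/2$, $c_2'=0$; but the condition needed for your first subcase is then $c_1'>\delta$, i.e.\ $c_1>2\delta$, which is no more automatic than the original $c_1>c_2+\delta$. Choosing $\eta$ smaller does not improve $c_1'$, and after time $\sigma_\eta$ the process may well re-enter $\{\ph\ge\eta\}$, so the restricted bound need not persist.

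The paper takes a completely different route that avoids the case distinction on $X_{\tau_V}$. By the strong Markov property it suffices to show that $A_0=\infty$ $\pr_y$-a.s.\ for every $y\in\X\setminus V$. Starting from such $y$ one has $\ph(y)=0$, so Dynkin's formula gives $\ex_y\ph(X_s)\le\delta s$. The key trick is the pathwise Cauchy--Schwarz inequality
\[
\Bigl(\int_\eps^t s^{-1}\,ds\Bigr)^2\le\Bigl(\int_\eps^t s^{-2}\ph(X_s)\,ds\Bigr)\Bigl(\int_\eps^t \ph(X_s)^{-1}\,ds\Bigr),
\]
which after taking expectations and using the Dynkin bound yields $\ex_y\bigl((A_t+c_2t)^{-1}\bigr)\le\delta/(c_1\log(t/\eps))$. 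Sending $\eps\searrow 0$ forces $A_t=\infty$ a.s., hence $A_0=\infty$ a.s. This argument works for every $c_1,c_2>0$ with no regime split.
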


\begin{proof}
We first assume  that $x \in \X \setminus V$. In this case it suffices to prove that $A_0 = \infty$. Since $\A \ph(y) \le \delta$ for all $y \in \X$, and $\ph(x) = 0$, from Dynkin's formula for the (deterministic) time $s$ it follows that $\ex_x(\ph(X_s)) \le \delta s$ for all $s > 0$. By the Schwarz inequality, 
\begin{align*}
 \expr{\int_\eps^t \frac{1}{s} \, ds}^2 & \le \expr{\int_\eps^t \frac{\ph(X_s)}{s^2} \, ds} \expr{\int_\eps^t \frac{1}{\ph(X_s)} \, ds},
\end{align*}
where $0 < \eps < t$. Here we use the conventions $1 / 0 = \infty$ and $0 \cdot \infty = \infty$.
Thus,
\begin{align*}
 \ex_x {\left(\expr{\int_\eps^t \frac{1}{\ph(X_s)} \, ds}^{-1}\right)} & \le \expr{\int_\eps^t \frac{1}{s} \, ds}^{-2} \ex_x \expr{\int_\eps^t \frac{\ph(X_s)}{s^2} \, ds} \\
 & \le \expr{\int_\eps^t \frac{1}{s} \, ds}^{-2} \int_\eps^t \frac{\delta}{s} \, ds = \frac{\delta}{\log(t / \eps)} \, ,
\end{align*}
with the convention $1 / \infty = 0$. Hence,
\begin{equation}
\label{eq:exactest}
\begin{split}
 \ex_x \expr{\frac{1}{A_t + c_2 t}} & \le \ex_x {\left(\expr{\int_\eps^t (\psi(X_s) + c_2) ds}^{-1}\right)} \\
 & \le \ex_x {\left(\expr{\int_\eps^t \frac{c_1}{\ph(X_s)} \, ds}^{-1}\right)} \le \frac{\delta}{c_1 \log(t / \eps)} \, .
\end{split}
\end{equation}
By taking $\eps \searrow 0$, we obtain
\[
 \ex_x \expr{\frac{1}{A_t + c_2 t}} = 0 .
\]
It follows that $A_t = \infty$ $\pr_x\as$ We conclude that $A_0 = \infty$ and $M_0 = 0$ $\pr_x\as$, as desired.

When $x \in V$, the result in the statement of the lemma follows from the strong Markov property. 
Indeed, by the definition~\eqref{eq:m} of $A_t$, $A(\tau_V) = A(\tau_V-) + (A_0 \circ \vartheta_{\tau_V})$, where $\vartheta_{\tau_V}$ is the shift operator on the underlying probability space, which shifts sample paths of $X_t$ by the random time $\tau_V$, and $A(\tau_V-)$ denotes the left limit of $A_t$ at $t = \tau_V$. Hence, $M(\tau_V) = M(\tau_V-) \cdot (M_0 \circ \vartheta_{\tau_V})$. Furthermore, $X(\tau_V)\in \X \setminus V$ $\pr_x\as$, so by the first part of the proof, we have $\ex_{X(\tau_V)}(M_0) = 0$ $\pr_x\as$ Thus,
\begin{align*}
 \ex_x M_{\tau_V} & = \ex_x(M_{\tau_V-} \ex_{X(\tau_V)}(M_0)) = 0,
\end{align*}
which implies that $M(\tau_V) = 0$ $\pr_x\as$ and $A(\tau_V) = \infty$ $\pr_x\as$.\end{proof}

From now on we only consider the case when the assumptions of Lemma~\ref{lem:boundary1} are satisfied, and $c_1$, $c_2$ are reserved for the constants in the condition $\psi(x) \ge c_1 (\ph(x))^{-1} - c_2$. By the definition and right-continuity of paths of $X_t$, $A_t$ and $M_t$ are monotone right-differentiable continuous functions of $t$ on $[0, \tau_V)$, with derivatives $\psi(X_t)$ and $-\psi(X_t) M_t$, respectively. 

Let $\eps_a(\cdot)$ be the Dirac measure at $a$. Lemma~\ref{lem:boundary1} yields the following result.

\begin{corollary}
\label{corr:gpsipsi}
We have $-d M_t = \psi(X_t) M_t dt + M(\tau_V-) \eps_{\tau_V}(dt)$ $\pr_x\as$ In particular,
\begin{align}
\label{eq:gpsipsi}
-\ex_x \int_{[0, \tau)} f(X_t) d M_t 
= \ex_x \expr{\int_0^\tau f(X_t) \psi(X_t) M_t dt} + \ex_x \expr{M_{\tau_V-} f(X_{\tau_V}) ; \tau > \tau_V}
\end{align}
for any measurable random time $\tau$ and nonnegative or bounded function $f$.
\qed
\end{corollary}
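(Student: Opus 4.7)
The plan is to read off the differential of $-M_t$ piecewise at the single singular moment $\tau_V$, and then integrate against $f(X_t)\ind_{[0,\tau)}(t)$ and take expectations.

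On $[0,\tau_V)$ the functional $A_t$ is finite, because $\tau_V=\tau_\infty$ by Lemma~\ref{lem:boundary1}. For $t<\tau_V$ the limit in \eqref{eq:m} collapses to the ordinary Lebesgue integral $A_t=\int_0^t\psi(X_s)\,ds$; this identifies $A_t$ as absolutely continuous on each compact subinterval of $[0,\tau_V)$ with density $\psi(X_t)$, and the chain rule then yields $-dM_t=\psi(X_t)M_t\,dt$ on $[0,\tau_V)$. At $t=\tau_V$, Lemma~\ref{lem:boundary1} forces $M(\tau_V)=0$, so $M_t$ undergoes a single downward jump of size $M(\tau_V-)$, contributing the atom $M(\tau_V-)\eps_{\tau_V}(dt)$ to $-dM_t$. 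For $t>\tau_V$ we have $M_t\equiv 0$, hence $dM_t\equiv 0$, and the product $\psi(X_t)M_t$ vanishes there as well. Summing the three pieces gives $-dM_t=\psi(X_t)M_t\,dt+M(\tau_V-)\eps_{\tau_V}(dt)$ $\pr_x\as$

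For the integral identity \eqref{eq:gpsipsi} I integrate $-f(X_t)\,dM_t$ over $[0,\tau)$ and take $\ex_x$. Fubini's theorem, in the monotone-convergence form for $f\ge 0$ (or the dominated-convergence form for bounded $f$), allows the interchange of integration and expectation. The absolutely continuous part produces $\ex_x\int_0^{\tau\wedge\tau_V} f(X_t)\psi(X_t)M_t\,dt$, which equals $\ex_x\int_0^\tau f(X_t)\psi(X_t)M_t\,dt$ because $M_t=0$ for $t\ge\tau_V$. The atom at $\tau_V$ is picked up precisely on the event $\{\tau>\tau_V\}$ and contributes $\ex_x(M_{\tau_V-}f(X_{\tau_V});\,\tau>\tau_V)$, yielding the stated identity.

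The only genuine technical point is the reduction of \eqref{eq:m} to an ordinary Lebesgue integral on $[0,\tau_V)$: although $\psi$ may blow up at $\partial V$, the finiteness of $A_t$ on $[0,\tau_V)$ already forces local integrability of $s\mapsto\psi(X_s)$ and hence absolute continuity of $A_t$, after which the chain rule and the accounting of the single jump at $\tau_V$ are standard. Everything else is bookkeeping.
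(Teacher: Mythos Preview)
Your argument is correct and matches the paper's reasoning exactly: the paper records (just before the corollary) that $A_t$ and $M_t$ are continuous on $[0,\tau_V)$ with right derivatives $\psi(X_t)$ and $-\psi(X_t)M_t$, and then invokes Lemma~\ref{lem:boundary1} for the jump $M(\tau_V-)-M(\tau_V)=M(\tau_V-)$ at $\tau_V$; the corollary is stated with a \qed and no further proof. Your write-up simply makes this explicit, including the bookkeeping for~\eqref{eq:gpsipsi}.
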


We emphasize that if $M_t$ has a jump at $\tau$, in which case we must have $\tau = \tau_V$, then the jump does not contribute to the Lebesgue-Stieltjes integral $\int_{[0, \tau)} f(X_t) d M_t$ in~\eqref{eq:gpsipsi}. The same remark applies to~\eqref{eq:psid} below.

Recall that $\tau_a = \inf \set{t \ge 0 : A_t \ge a}$. Note that $\tau_a$ are Markov times for $X_t$, $a \mapsto \tau_a$ is the left-continuous inverse of $t \mapsto A_t$, and the events $\{t < \tau_a\}$ and $\{A_t < a\}$ are equal. We have $A(\tau_a) = a$ unless $\tau_a = \tau_V$, and, clearly, $\tau_a \le \tau_\infty = \tau_V$.

The following may be considered as an extension of Dynkin's formula.

\begin{lemma}
\label{lem:psigen}
For $f \in \dom(\A)$, Markov time $\tau$, and $x \in V$, we have
\begin{align}
\label{eq:psid}
 \ex_x \int_0^\tau \A f(X_t) M_t dt & = \ex_x (f(X_\tau) M_{\tau-}) - f(x) - \ex_x \int_{[0,\tau)} f(X_t) d M_t .
\end{align}
If $g = (\A - \psi) f$ and $\tau \le \tau_V$, then
\begin{align}
\label{eq:psigen}
 \ex_x \int_0^\tau g(X_t) M_t dt & = \ex_x(f(X_\tau) M_{\tau-}) - f(x) .
\end{align}
In fact, \eqref{eq:psid} holds for every strong Markov right-continuous multiplicative functional $M_t$.
\end{lemma}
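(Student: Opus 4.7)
The identity~\eqref{eq:psigen} follows from~\eqref{eq:psid} and Corollary~\ref{corr:gpsipsi}. Under the hypothesis $\tau \le \tau_V$, the boundary term $M_{\tau_V-}f(X_{\tau_V})\ind_{\tau > \tau_V}$ in~\eqref{eq:gpsipsi} vanishes identically, so $-\ex_x\int_{[0,\tau)}f(X_t)\,dM_t = \ex_x\int_0^\tau f(X_t)\psi(X_t)M_t\,dt$. Substituting into~\eqref{eq:psid} and combining the two Lebesgue integrals yields $\ex_x\int_0^\tau (\A f - \psi f)(X_t)M_t\,dt = \ex_x(f(X_\tau)M_{\tau-}) - f(x)$, which is~\eqref{eq:psigen} since $g=(\A-\psi)f$.

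The substantive task is therefore~\eqref{eq:psid}. The plan is to read it as an integration-by-parts identity for the product $f(X_t)M_t$. The functional $M_t$ is right-continuous of bounded variation on $[0,\infty)$: nonincreasing on $[0,\tau_V)$, with at most a single jump at $\tau_V$, and $M_t\equiv 0$ for $t>\tau_V$. The process $f(X_t)$ is a semimartingale with the Dynkin decomposition $f(X_t) = f(x) + N^f_t + \int_0^t \A f(X_s)\,ds$, where $N^f$ is a martingale bounded on bounded time intervals, since $f\in\dom(\A)\sub C_0(\X)$ makes both $f$ and $\A f$ bounded. The product rule applied to $f(X_t)M_t$ yields
\begin{align*}
 f(X_t)M_t - f(x) = \int_0^t M_{s-}\,dN^f_s + \int_0^t M_s\,\A f(X_s)\,ds + \int_{(0,t]} f(X_{s-})\,dM_s + [f(X),M]_t.
\end{align*}
Because $M$ jumps only at $\tau_V$, the last two terms collapse to $\int_{(0,t]}f(X_s)\,dM_s$. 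Taking $\ex_x$, the stochastic integral against $N^f$ vanishes (bounded local martingale, hence a true martingale). After matching conventions between $M_t$ and $M_{t-}$ on the left, and between $\int_{(0,t]}$ and $\int_{[0,t)}$ on the right --- both adjustments contribute $\pm\,f(X_t)\Delta M_t\ind_{t=\tau_V}$, which cancel --- one recovers~\eqref{eq:psid} for a deterministic time $\tau=t$.

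To pass to an arbitrary Markov time $\tau$, apply the deterministic identity to $\tau\wedge n$, justified by optional sampling for the martingale contribution, and send $n\to\infty$. All the integrands are dominated: $f$ and $\A f$ are bounded, $M_t\in[0,1]$, and the total variation of $dM$ on $[0,\infty)$ is at most $1$, so dominated convergence concludes the extension.

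The principal obstacle is the careful bookkeeping around the jump of $M$ at $\tau_V$: the distinction of $M_{\tau-}$ from $M_\tau$ on the left, and of $\int_{[0,\tau)}$ from $\int_{[0,\tau]}$ on the right, must be reconciled with the product-rule decomposition, and the offsetting $\tau_V$-corrections must be shown to cancel exactly. The concluding assertion that~\eqref{eq:psid} holds for any strong Markov right-continuous multiplicative functional $M_t$ is automatic from this approach, since the derivation used only the general semimartingale calculus for $M$ and the multiplicativity needed to invoke the strong Markov property, and never the specific exponential form $M_t=e^{-A_t}$.
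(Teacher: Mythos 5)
Your proof is correct, but it follows a genuinely different route from the paper's. The paper never touches the semimartingale product rule: it uses the layer--cake representation $M_t = \int_{A_t}^\infty e^{-a}\,da$ together with $\set{t < \tau_a} = \set{A_t < a}$ and Fubini to rewrite $\ex_x\int_0^\tau \A f(X_t)M_t\,dt$ as $\int_0^\infty\bigl(\ex_x\int_0^{\min(\tau,\tau_a)}\A f(X_t)\,dt\bigr)e^{-a}\,da$, applies the classical Dynkin formula~\eqref{eq:d} at each Markov time $\min(\tau,\tau_a)$, and then undoes the substitution to identify $\int_0^\infty f(X_{\min(\tau,\tau_a)})e^{-a}\,da$ with $-\int_{[0,\infty)}f(X_{\min(\tau,t)})\,dM_t$; the passage to~\eqref{eq:psigen} via Corollary~\ref{corr:gpsipsi} is the same in both arguments. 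The paper's method stays inside the elementary toolkit it has set up (Dynkin's formula plus Fubini) and sidesteps stochastic integration, at the cost of being written for the specific exponential functional; your method (Dynkin martingale $N^f$ plus It\^o's product rule for a semimartingale against a finite-variation process) requires more machinery but makes the closing assertion --- that \eqref{eq:psid} holds for any strong Markov right-continuous multiplicative functional --- immediate, since such functionals are $[0,1]$-valued, hence nonincreasing and of finite variation. Three points in your write-up deserve sharper wording, though none is a gap. First, $\int_0^t M_{s-}\,dN^f_s$ is not a \emph{bounded} local martingale; rather, $N^f$ is bounded on compact time intervals, so the integral of the bounded predictable integrand $M_{s-}$ is an $L^2$-martingale on compacts, which is what optional sampling at $\tau\wedge n$ needs. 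Second, the domination of $\int_0^{\tau\wedge n}\A f(X_s)M_s\,ds$ as $n\to\infty$ uses not just $M\le 1$ but $M_s=0$ for $s\ge\tau_V$ together with $\ex_x\tau_V<\infty$ (Proposition~\ref{prop:tau}), giving the integrable majorant $\norm{\A f}_\infty\,\tau_V$. Third, for the general multiplicative functional the covariation term collapses not because ``$M$ jumps only at $\tau_V$'' but because the continuous part of $dM$ is atomless while $\set{s: X_{s-}\ne X_s}$ is countable, and the jump part of $dM$ pairs with $f(X_s)$ rather than $f(X_{s-})$; this is routine but should be said.
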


\begin{proof}
Since $\int_{A_t}^\infty e^{-a} da = M_t$ and $\{t < \tau_a\} = \{A_t < a\}$, by Fubini,
\begin{align*}
 \ex_x \int_0^\tau \A f(X_t) M_t dt & = \ex_x \int_0^\tau \A f(X_t) \expr{\int_0^\infty \ind_{(0, \tau_a)}(t) e^{-a} da} dt \\
 & = \int_0^\infty \expr{\ex_x \int_0^{\min(\tau, \tau_a)} \A f(X_t) dt} e^{-a} da .
\end{align*}
Since $\min(\tau, \tau_a)$ is a Markov time for $X_t$, we can apply Dynkin's formula. It follows that
\begin{align*}
 \ex_x \int_0^{\min(\tau, \tau_a)} \A f(X_t) dt & = \ex_x(f(X_{\min(\tau, \tau_a)})) - f(x) .
\end{align*}
By Fubini and the substitution $\tau_a = t$, $a = A_t$, $e^{-a} = M_t$,
\begin{align*}
 \ex_x \int_0^\tau \A f(X_t) M_t dt & = \int_0^\infty \expr{\ex_x(f(X_{\min(\tau, \tau_a)})) - f(x)} e^{-a} da \\
 & = \ex_x\expr{\int_0^\infty f(X_{\min(\tau, \tau_a)}) e^{-a} da} - f(x) \\
 & = -\ex_x \expr{\int_{[0, \infty)} f(X_{\min(\tau, t)}) d M_t} - f(x).
\end{align*}
We emphasize that the last equality holds true also if $\tau = \tau_V$ with positive probability. We see that \eqref{eq:psid} holds. By~\eqref{eq:gpsipsi} we obtain~\eqref{eq:psigen}.
\end{proof}

The functional $M_t$ is a Feynman-Kac functional, interpreted as the diminishing mass of a particle started at $x \in \X$. We shall estimate the kernel $\pi_\psi(x, dy)$, defined as the expected amount of mass left by the particle at $dy$. Namely, for any nonnegative or bounded $f$ we define
\begin{align}
\label{eq:pipsi2}
 \pi_\psi f(x) & = -\ex_x \int_{[0, \infty)} f(X_t) d M_t , && x \in \X .
\end{align}
Note that $\pi_\psi f(x) = f(x)$ for $x \in \X \setminus V$. By the substitution $\tau_a = t$, $a = A_t$, $e^{-a} = M_t$ and Fubini, we obtain that
\begin{align}
\label{eq:pipsi1}
 \pi_\psi f(x) & = \ex_x \expr{\int_0^\infty f(X_{\tau_a}) e^{-a} da} = \int_0^\infty \ex_x(f(X_{\tau_a})) e^{-a} da .
\end{align}
The potential kernel $G_\psi(x, dy)$ of the functional $M_t$ will play an important role. Namely, for any nonnegative or bounded $f$ we let
\begin{equation}
\label{eq:gpsi}
 G_\psi f(x) = \ex_x \int_0^\infty f(X_t) M_t dt = \ex_x \int_0^\infty \expr{\int_0^{\tau_a} f(X_t) dt} e^{-a} da .
\end{equation} 
In the second equality above, the identities $M_t = \int_{A_t}^\infty e^{-a} da$ and $\set{t < \tau_a} = \set{A_t < a}$ were used together with Fubini, as in the proof of Lemma~\ref{lem:psigen}. We note that $G_\psi(x,dy)$ measures the expected time spent by the process $X_t$ at $dy$, weighted by the decreasing mass of $X_t$ (compare with the similar role of $G_V(x, y)m(dy)$). There is a semigroup of operators $T^\psi_t f(x) = \ex_x(f(X_t) M_t)$ associated with the multiplicative functional $M_t$. Furthermore, $T^\psi_t$ are transition operators of a Markov process $X^\psi_t$, the \emph{subprocess} of $X_t$ corresponding to $M_t$. With the definitions of~\cite{MR0264757}, $M_t$ is a strong Markov right-continuous multiplicative functional and $V$ is the set of permanent points for $M_t$. Therefore, $X^\psi_t$ is a standard Markov process with state space $V$, see~\cite[III.3.12, III.3.13 and the discussion after III.3.17]{MR0264757}. (From~\eqref{eq:exactest} and~\cite[Proposition~III.5.9]{MR0264757} it follows that $M_t$ is an exact multiplicative functional. Furthermore, since $M_t$ can be discontinuous only at $t = \tau_V$, the functional $M_t$ is quasi-left continuous in the sense of~\cite[III.3.14]{MR0264757}, and therefore $X^\psi_t$ is a Hunt process on $V$. However, we do not use these properties in our development.)

Informally, $X^\psi_t$ is obtained from $X_t$ by terminating the paths of $X_t$ with rate $\psi(X_t) dt$, and $\pi_\psi(x, dy)$ is the distribution of $X_t$ stopped at the time when $X^\psi_t$ is killed. Furthermore, $G_\psi(x, dy)$ is the potential kernel of $X^\psi_t$. To avoid technical difficulties related to subprocesses and the domains of their generators, in what follows we rely mostly on the formalism of additive and multiplicative functionals.

The multiplicative functional $\hat{M}_t$ is defined just as $M_t$, but for the dual process $\hat{X}_t$. We correspondingly define $\hat{\pi}_\psi$ and $\hat{G}_\psi$. Since the paths of $\hat{X}_t$ can be obtained from those of $X_t$ by time-reversal and $M_t$ and $\hat{M}_t$ are defined by integrals invariant 
upon time-reversal, the definition of $\hat{M}_t$ agrees with that of~\cite[formula~(13.24)]{MR2152573}. Hence, by~\cite[Theorem~13.25]{MR2152573}, $M_t$ and $\hat{M}_t$ are dual multiplicative functionals. It follows that the subprocess $\hat{X}^\psi_t$ of $\hat{X}_t$ corresponding to the multiplicative functional $\hat{M}_t$ is the dual process of $X^\psi_t$; see~\cite[13.6 and Remark~13.26]{MR2152573}. Hence, the potential kernel $G_\psi$ of $X^\psi_t$ admits a uniquely determined density function $G_\psi(x, y)$ ($x, y \in V$), which is excessive in $x$ with respect to the transition semigroup $T^\psi_t$ of $X^\psi_t$, and excessive in $y$ with respect to the transition semigroup $\hat{T}^\psi_t$ of $\hat{X}^\psi_t$. Furthermore, $\hat{G}_\psi(x, y) = G_\psi(y, x)$ is the density of the potential kernel of $\hat{X}^\psi_t$. Since $G_\psi(x, dy)$ is concentrated on $V$, we let $G_\psi(x, y) = 0$ if $x \in \X \setminus V$ or $y \in \X \setminus V$. Clearly, $G_\psi(x, dy)$ is dominated by $G_V(x, dy)$ for all $x \in V$, and therefore 
\[
 G_\psi(x, y) \le G_V(x, y),\quad x, y \in \X.
\]
There are important relations between $\pi_\psi$, $G_\psi$, $\psi$ and $\A$. If $f$ is nonnegative or bounded and \emph{vanishes} in $\X \setminus V$, then by Corollary~\ref{corr:gpsipsi} we have
\begin{align}
\label{eq:pig1}
 \pi_\psi f(x) & = G_\psi (\psi f)(x) , && x \in V .
\end{align}
Considering $\tau = \tau_V$, we note that $M(\tau_V) = 0$, and so for bounded or nonnegative $f$ 
\[
  \int_{[0, \tau_V]} f(X_t) d M_t = \int_{[0, \tau_V)} f(X_t) d M_t - f(X_{\tau_V}) M_{\tau_V-}.
\]
If $f \in \dom(\A)$, then formula~\eqref{eq:psid} gives
\begin{align}
\label{eq:pig2}
  G_\psi \A f(x) & = \pi_\psi f(x) - f(x) , && x \in V .
\end{align}
Furthermore, by~\eqref{eq:psigen}, for $f \in \dom(\A)$ we have
\begin{align*}
 G_\psi (\A - \psi) f(x) & = \ex_x (f(X_{\tau_V}) M_{\tau_V-}) - f(x) , && x \in V .
\end{align*}
In particular, if $f \in \dom(\A)$ vanishes outside of $V$, then we have
\begin{align}
\label{eq:pig3}
 G_\psi (\A - \psi) f(x) & = -f(x) , && x \in V
\end{align}
(which also follows directly from~\eqref{eq:pig1} and~\eqref{eq:pig2}). Formula~\eqref{eq:pig3} means that the generator of $X^\psi_t$ agrees with $\A - \psi$ on the intersection of the respective domains.

We now introduce the Green operators $G^\psi_U$ and harmonic measures $\pi^\psi_U$ for $X^\psi_t$. Let $U$ be an open subset of $V$. For nonnegative or bounded $f$ and $x \in V$ we let
\begin{align*}
 \pi^\psi_U f(x) & = \ex_x (f(X_{\tau_U}) M_{\tau_U-}) , & G^\psi_U f(x) & = \ex_x \int_0^{\tau_U} f(X_t) M_t dt .
\end{align*}
We note that $G^\psi_V f = G_\psi f$. Also, $\pi^\psi_V f = \pi_\psi f$, if $f$ vanishes in $V$. Furthermore, $G^\psi_U$ admits a density function $G^\psi_U(x, y)$, and we have $G^\psi_U(x, y) \le G_U(x, y)$, $G^\psi_U(x, y) \le G_\psi(x, y)$. If $f$ vanishes outside of $V$, then we can replace $M(\tau_U-)$ by $M(\tau_U)$ in the definition of $\pi^\psi_U$. By~\eqref{eq:psigen}, for any $f \in \dom(\A)$ we have
\begin{align}
\label{eq:psid2}
 \pi^\psi_U f(x) & = G^\psi_U (\A - \psi) f(x) + f(x) , && x \in V .
\end{align}
In particular, by an approximation argument,
\begin{align}
\label{eq:psiiw}
  \pi^\psi_U(x, E) & = \int_U G^\psi_U(x, y) \nu(y, E) m(dy) , && x \in U , \, E \sub \X \setminus \overline{U} .
\end{align}
Formulas~\eqref{eq:psid2} and~\eqref{eq:psiiw} can be viewed correspondingly as Dynkin's formula applied to the first exit time, and the Ikeda-Watanabe formula for $X^\psi_t$.

Recall that $x_0 \in \X$, $0 < r < p < q < R < R_0$, $B(x_0, q) \sub V \sub B(x_0, R)$, see Figure~\ref{fig:1}, $\ph \in \dom$ is positive in $V$ and vanishes in $\X \setminus V$, and $\ph(x) = 1$ for $x \in B(x_0, q)$.

\begin{lemma}
\label{lem:pipsiuest}
Let $U = V \setminus \overline{B}(x_0, q)$. If $(\A - \psi) \ph(x) \le 0$ for $x \in V$, then
\begin{align}
\label{eq:pipsiuest}
 \pi^\psi_U(x, V \setminus U) & \le \ph(x) , && x \in U .
\end{align}
\end{lemma}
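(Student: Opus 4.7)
The plan is to apply the Dynkin-type identity~\eqref{eq:psid2} with the bump function $\ph$ itself as the test function, and then exploit the hypothesis $(\A - \psi)\ph \le 0$ together with positivity of $G^\psi_U$.

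First I would observe the two key pointwise facts about $\ph$. Since $\ph = 1$ on $\overline{B}(x_0, q)$, and the bump function from Assumption~\ref{assu:gen} satisfies $\overline{B}(x_0, q) \sub V$, we have $V \setminus U = V \cap \overline{B}(x_0, q) = \overline{B}(x_0, q)$, and $\ph \equiv 1$ there. Because also $0 \le \ph \le 1$ everywhere on $\X$, this yields the pointwise domination
\begin{align*}
 \ind_{V \setminus U}(y) & \le \ph(y), & y \in \X.
\end{align*}

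Next, since $\ph \in \dom \sub \dom(\A)$ and $\tau_U \le \tau_V$ (because $U \sub V$), formula~\eqref{eq:psid2} applied to $f = \ph$ gives, for $x \in V$,
\begin{align*}
 \pi^\psi_U \ph(x) & = G^\psi_U (\A - \psi) \ph(x) + \ph(x).
\end{align*}
The process $X_t$ stays in $U \sub V$ on the interval $[0, \tau_U)$, so $(\A - \psi)\ph(X_t) \le 0$ by hypothesis for $t < \tau_U$. Since $M_t \ge 0$, the representation $G^\psi_U g(x) = \ex_x \int_0^{\tau_U} g(X_t) M_t \, dt$ yields $G^\psi_U (\A - \psi)\ph(x) \le 0$, hence $\pi^\psi_U \ph(x) \le \ph(x)$.

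Finally, combining the pointwise bound on $\ph$ with the previous inequality gives, for $x \in U$,
\begin{align*}
 \pi^\psi_U(x, V \setminus U) & = \ex_x\bigl(\ind_{V \setminus U}(X_{\tau_U}) M_{\tau_U-}\bigr) \le \ex_x\bigl(\ph(X_{\tau_U}) M_{\tau_U-}\bigr) = \pi^\psi_U \ph(x) \le \ph(x),
\end{align*}
which is~\eqref{eq:pipsiuest}. The argument is short and not really obstructed; the only things one needs to be careful about are that $\ph$ has the right membership to invoke~\eqref{eq:psid2}, that $\tau_U \le \tau_V$ so the identity is applicable, and that the sign hypothesis $(\A - \psi)\ph \le 0$ is used only inside $V$, which is exactly where the path lives before time $\tau_U$.
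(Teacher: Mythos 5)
Your proof is correct and follows essentially the same route as the paper: apply \eqref{eq:psid2} with $f=\ph$, use the sign hypothesis and positivity of $G^\psi_U$ to get $\pi^\psi_U\ph\le\ph$, and then use $\ind_{V\setminus U}\le\ph$ (since $\ph=1$ on $V\setminus U$ and $\ph\ge0$). The paper's version is just a terser rendering of exactly these steps.
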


\begin{proof}
By~\eqref{eq:psid2}, for $x \in U$ we have
\begin{align*}
 \pi^\psi_U \ph(x) - \ph(x) & = G^\psi_U (\A - \psi) \ph(x) \le 0.
\end{align*}
It remains to note that $\ph = 1$ on $V \setminus U$.
\end{proof}

Essentially, we use here (and later on) superharmonicity of $\ph$ with respect to $\A - \psi$.

\begin{lemma}
\label{lem:gpsi}
If $(\A - \psi) \ph(x) \le 0$ for $x \in V$, then
\begin{align}
\label{eq:gpsiest}
 G_\psi(x, y) & \le c_{\eqref{eq:gpsiest}} \ph(x) , && x \in V \setminus B(x_0, p) , \, y \in B(x_0, r) ,
\end{align}
where
\begin{align*}
 c_{\eqref{eq:gpsiest}}=c_{\eqref{eq:gpsiest}}(x_0, r, p, q, R) & = c_{\eqref{eq:green}}(x_0, r, p, R) + \frac{c_{\eqref{eq:tau}}(x_0, R) (c_{\eqref{eq:nu}}(x_0, p, q))^2}{m(B(x_0, p))} \, .
\end{align*}
\end{lemma}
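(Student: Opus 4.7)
I would split according to whether $x$ belongs to $V \cap (\overline{B}(x_0, q) \setminus B(x_0, p))$ or to $U := V \setminus \overline{B}(x_0, q)$. On the inner annular region, Assumption~\ref{assu:gen} gives $\ph(x) = 1$, and it suffices to use $G_\psi(x, y) \le G_{B(x_0, R)}(x, y) \le c_{\eqref{eq:green}}(x_0, r, p, R)$, which follows from Assumption~\ref{assu:green} (with source and target swapped via $G_B(x,y) = \hat{G}_B(y,x)$); the resulting bound is absorbed into $c_{\eqref{eq:gpsiest}}$. The interesting case is $x \in U$. Here I would combine the strong Markov property at $\sigma = \tau_U$ with the Ikeda--Watanabe formula~\eqref{eq:psiiw} for the subprocess $X^\psi$ (applicable since $B(x_0, q) \sub \X \setminus \overline{U}$, as $q > p$ and limits of points in $U$ satisfy $d(x_0,\cdot) \ge q$), discard the $V \cap \partial B(x_0, q)$ contribution by Hunt's hypothesis, and upgrade the resulting $m$-almost-everywhere identity in $y$ to a pointwise one via the standard identification of $\hat{X}^\psi$-excessive densities, to obtain
\begin{equation*}
G_\psi(x, y) = \int_U G^\psi_U(x, w) \biggl[\int_{V \cap B(x_0, q)} \nu(w, z)\, G_\psi(z, y)\, m(dz)\biggr] m(dw), \quad y \in B(x_0, r).
\end{equation*}

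I would split the inner integral at radius $p$. On the $V$-intersection of the annulus $B(x_0, q) \setminus B(x_0, p)$, Assumption~\ref{assu:green} still gives $G_\psi(z, y) \le c_{\eqref{eq:green}}$, so by Fubini the corresponding contribution is at most $c_{\eqref{eq:green}}\, \pi^\psi_U(x, V \cap (B(x_0, q) \setminus B(x_0, p))) \le c_{\eqref{eq:green}}\, \pi^\psi_U(x, V \setminus U) \le c_{\eqref{eq:green}}\, \ph(x)$ by Lemma~\ref{lem:pipsiuest}. On $V \cap B(x_0, p)$ I would invoke Assumption~\ref{assu:levy} twice. First, for $w \in U \sub \X \setminus B(x_0, q)$ and $z \in B(x_0, p)$, the comparability rewritten in the target variable via $\hat{\nu}(z, w) = \nu(w, z)$ gives $\nu(w, z) \le c_{\eqref{eq:nu}}(x_0, p, q)\, \nu(w, x_0)$, so the $z$-integral factors and
\begin{equation*}
\int_{V \cap B(x_0, p)} G_\psi(z, y)\, m(dz) = \hat{G}_\psi(y, V \cap B(x_0, p)) \le \hat{\ex}_y \hat{\tau}_V \le c_{\eqref{eq:tau}}(x_0, R)
\end{equation*}
by Proposition~\ref{prop:tau}. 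Second, the reverse averaged comparison $\nu(w, x_0) \le c_{\eqref{eq:nu}}(x_0, p, q)\, \nu(w, B(x_0, p))/m(B(x_0, p))$, obtained by integrating the same Assumption~\ref{assu:levy} estimate over $B(x_0, p)$, converts the remaining $w$-integral into $c_{\eqref{eq:tau}}\, (c_{\eqref{eq:nu}})^2\, \pi^\psi_U(x, V \cap B(x_0, p))/m(B(x_0, p)) \le c_{\eqref{eq:tau}}\, (c_{\eqref{eq:nu}})^2\, \ph(x)/m(B(x_0, p))$, once more by Lemma~\ref{lem:pipsiuest}. Summing the two contributions produces exactly the constant $c_{\eqref{eq:gpsiest}}$ defined in the statement.

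The main obstacle is the near-diagonal singularity of $G_\psi(z, y)$ when $z \in B(x_0, p)$ is close to $y \in B(x_0, r)$, where no pointwise bound analogous to Assumption~\ref{assu:green} is available. The escape route is to integrate $z$ out, trading the singular kernel for the harmless expected lifetime $\hat{\ex}_y \hat{\tau}_V$, and to use Assumption~\ref{assu:levy} in two complementary ways: as a pointwise upper bound on $\nu(w, z)$, and as a \emph{reverse} averaged comparison relating $\nu(w, x_0)$ to $\nu(w, B(x_0, p))$. The latter is what enables the second appeal to Lemma~\ref{lem:pipsiuest} and accounts for the $1/m(B(x_0, p))$ factor in~\eqref{eq:gpsiest}: effectively, the contribution from "direct jumps near $y$" is dominated by the probability that the killed path enters $B(x_0, p)$ at all, which is at most $\ph(x)$.
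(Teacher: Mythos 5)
Your overall route is the paper's: split according to whether $x$ lies in $V\cap(\overline{B}(x_0,q)\setminus B(x_0,p))$ or in $U=V\setminus\overline{B}(x_0,q)$; on the annulus use $\ph(x)=1$ and Assumption~\ref{assu:green}; for $x\in U$ apply the strong Markov property at $\tau_U$, treat the exit positions in $\overline{A}(x_0,p,q)$ by the sup bound plus Lemma~\ref{lem:pipsiuest}, and treat the exit positions in $B(x_0,p)$ by the two complementary uses of Assumption~\ref{assu:levy} together with Proposition~\ref{prop:tau} and Lemma~\ref{lem:pipsiuest}. All of those estimates, and the resulting constant, are correct, and your remark about upgrading an $m$-a.e.\ bound in $y$ to a pointwise one matches the paper's use of Proposition~\ref{prop:aa} applied to the regularization $g=G_\psi f$.

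There is, however, one step that fails as stated: the claimed \emph{identity}
\begin{equation*}
G_\psi(x, y) = \int_U G^\psi_U(x, w) \biggl[\int_{V \cap B(x_0, q)} \nu(w, z)\, G_\psi(z, y)\, m(dz)\biggr] m(dw).
\end{equation*}
The strong Markov property gives $G_\psi(x,y)=\pi^\psi_U\bigl(G_\psi(\cdot,y)\bigr)(x)$, but the Ikeda--Watanabe formula~\eqref{eq:psiiw} represents only the part of $\pi^\psi_U(x,\cdot)$ carried by sets in $\X\setminus\overline{U}$, i.e.\ exits that occur by a jump away from $\overline{U}$. The exit position may also lie on $\overline{U}\cap(V\setminus U)$, in particular on the sphere $\set{z : d(x_0,z)=q}$. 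This happens with positive probability whenever $X_t$ has a nondegenerate continuous component (a case the lemma must cover, cf.\ Examples~\ref{ex:levy} and~\ref{ex:brownian}), and even for pure-jump processes the event $X_{\tau_U-}=X_{\tau_U}\in\partial U$ need not be null — the paper warns about exactly this after~\eqref{eq:iw}. Hunt's hypothesis does not rescue the step: it only prevents the process from hitting \emph{irregular} points, and the sphere is in general neither polar nor semipolar. Since the omitted boundary term is nonnegative, your display is really only a lower bound for $G_\psi(x,y)$, so estimating its right-hand side does not bound $G_\psi(x,y)$. The repair is immediate and costs nothing: decompose $\pi^\psi_U\bigl(G_\psi(\cdot,y)\bigr)(x)$ into the contributions from $\overline{A}(x_0,p,q)$ and from $B(x_0,p)$, apply~\eqref{eq:psiiw} only to the latter (legitimate, since $B(x_0,p)\sub\X\setminus\overline{U}$), and absorb the entire former — sphere included — into your annulus estimate $c_{\eqref{eq:green}}\,\pi^\psi_U(x,V\setminus U)\le c_{\eqref{eq:green}}\,\ph(x)$, which needs no Lévy-system representation. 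With that single correction your argument coincides with the paper's proof.
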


\begin{proof}
Let $U = V \setminus \overline{B}(x_0, q)$ and $x \in U$. Let $f$ be a nonnegative function supported in $B(x_0, r)$, $\int f(y) m(dy) = 1$ and $g(z) = G_\psi f(z)$ (this is done to regularize $G_\psi(x, y)$). Using the definition of $G_\psi$, the relation $f(X_t) = 0$ for $t < \tau_U$ and the strong Markov property, we obtain that
\begin{align*}
 g(x) & = \ex_x \expr{\int_{\tau_U}^\infty f(X_t) M_t dt} = \ex_x (g(X_{\tau_U}) M_{\tau_U}) = \pi^\psi_U g(x) .
\end{align*}
We split the last expectation into two parts, corresponding to the events $X(\tau_U) \in B(x_0, p)$ and $X(\tau_U) \in \overline{A}(x_0, p, q)$ respectively. By~\eqref{eq:green} and the inequality $M(\tau_U) \le 1$, we have $g(z) \le c_{\eqref{eq:green}}(x_0, r, p, R)$ for $z \in \overline{A}(x_0, p, q)$. From~\eqref{eq:pipsiuest} it follows that
\begin{align}
\label{eq:gpsi2}
 \pi^\psi_U (g \ind_{\overline{A}(x_0, p, q)})(x) & \le c_{\eqref{eq:green}} \pi^\psi_U(x, \overline{B}(x_0, q)) \le c_{\eqref{eq:green}} \ph(x) .
\end{align}
For the other part, we use~\eqref{eq:psiiw} and~\eqref{eq:nu},
\begin{align*}
 \pi^\psi_U(g \ind_{B(x_0, p)})(x) & = \int_U \expr{\int_{B(x_0, p)} g(z) \nu(y, z) m(dz)} G^\psi_U(x, y) m(dy) \\
 & \le c_{\eqref{eq:nu}} \int_U \nu(y, x_0) G^\psi_U(x, y) m(dy) \cdot \int_{B(x_0, p)} g(z) m(dz) ,
\end{align*}
with constant $c_{\eqref{eq:nu}}(x_0, p, q)$. Using again~\eqref{eq:nu} and~\eqref{eq:psiiw}, and then~\eqref{eq:pipsiuest}, we obtain
\begin{align*}
 \int_U \nu(y, x_0) G^\psi_U(x, y) m(dy) & \le \frac{c_{\eqref{eq:nu}}}{m(B(x_0, p))} \int_U \nu(y, B(x_0, p)) G^\psi_U(x, y) m(dy) \\
 & = \frac{c_{\eqref{eq:nu}}}{m(B(x_0, p))} \, \pi^\psi_U(x, B(x_0, p)) \le \frac{c_{\eqref{eq:nu}} \ph(x)}{m(B(x_0, p))} \, .
\end{align*}
By~\eqref{eq:tau}, we have
\begin{align*}
 \int_{B(x_0, p)} g(z) m(dz) & \le \int_{B(x_0, r)} \expr{\int_{B(x_0, p)} G_V(z, y) m(dz)} f(y) m(dy) \\
 & \le \int_{B(x_0, r)} \hat{\ex}_y(\hat{\tau}_V) f(y) m(dy) \le c_{\eqref{eq:tau}} ,
\end{align*}
with constant $c_{\eqref{eq:tau}}(x_0, R)$. Hence,
\begin{align*}
 \pi^\psi_U(g \ind_{B(x_0, p)})(x) & \le \frac{(c_{\eqref{eq:nu}})^2 c_{\eqref{eq:tau}} \ph(x)}{m(B(x_0, p))} \, .
\end{align*}
This and~\eqref{eq:gpsi2} yield that $g(x) \le c_{\eqref{eq:gpsiest}} \ph(x)$, with $c_{\eqref{eq:gpsiest}}$ given in the statement of the lemma.

Recall that $g = G_\psi f$, where $f$ is an arbitrary nonnegative function vanishing outside $B(x_0, r)$ with integral equal to $1$. Hence, by approximation, for each $x \in \X \setminus \overline{B}(x_0, q)$, formula~\eqref{eq:gpsiest} holds for \emph{almost every} $y \in B(x_0, r)$. By Proposition~\ref{prop:aa} (applied to $X^\psi_t$), \eqref{eq:gpsiest} holds for \emph{every} $y \in B(x_0, r)$.

For $x \in \overline{A}(x_0, p, q)$, the result follows easily from~\eqref{eq:green}. Indeed, we have $G_\psi(x, y) \le G_V(x, y) \le c_{\eqref{eq:green}} = c_{\eqref{eq:green}} \ph(x)$, with constant $c_{\eqref{eq:green}}(x_0, r, p, R)$. Hence, formula~\eqref{eq:gpsiest} holds also for $x \in \overline{A}(x_0, p, q)$, with the same constant.
\end{proof}

The above arguments can be repeated for the dual process $\hat{X}_t$. Hence, the \emph{dual} versions of Lemmas~\ref{lem:pipsiuest} and~\ref{lem:gpsi} hold true, with the same $c_{\eqref{eq:gpsiest}}$.

We are very close to the estimate of $\pi_\psi(x, dy)$ for $x \in B(x_0, r)$. Indeed, for $y \in V$ we have $\pi_\psi(x, dy) = G_\psi(x, y) \psi(y) m(dy)$ (see~\eqref{eq:pig1}). When $y \in \X \setminus \overline{V}$, then, at least heuristically, $\pi_\psi(x, dy) = \hat{\A} G_\psi^x(y) m(dy)$, where $G_\psi^x(y) = G_\psi(x, y)$ vanishes outside of $V$ (see~\eqref{eq:pig2}). This will give satisfactory bounds when $y \in \X \setminus V$. Before we proceed, we first show that $\pi_\psi(x, \partial V) = 0$.

\begin{lemma}
\label{lem:pipsiint}
Suppose that for some $c_3, c_4 > 0$, we have $\psi(x) \ge c_3 + (\ph(x))^{-1} \hat{\A} \ph(x)$ and $\psi(x) \le c_4 / \ph(x)$ for $x \in V$. Then for every nonnegative function $f$ we have
\begin{align}
\label{eq:pipsiint}
 c_3 \int_V \pi_\psi f(x) \ph(x) m(dx) & \le c_4 \int_V f(x) m(dx) + \int_{\X \setminus V} f(x) \hat{\A} \ph(x) m(dx) .
\end{align}
\end{lemma}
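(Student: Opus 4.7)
The approach is to prove the inequality by splitting $\pi_\psi f$ into the mass deposited inside $V$ (continuously at rate $\psi(X_t) M_t\,dt$) and the mass deposited outside $V$ (as a single jump at time $\tau_V$ of size $M_{\tau_V-}$), and to estimate each part by a duality argument combined with a single pointwise bound. The central ingredient is
\[
 c_3\,\hat G_\psi\ph(y) \le \ph(y), \qquad y \in V.
\]
To prove it, I would apply the dual of~\eqref{eq:pig3} to $\ph \in \dom(\hat\A)$, noting that $\ph$ vanishes on $\X\setminus V$ by construction; this yields $\hat G_\psi(\psi\ph - \hat\A\ph) = \ph$ on $V$. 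The hypothesis rewrites as $\psi\ph - \hat\A\ph \ge c_3\ph \ge 0$ on $V$, and since $\hat G_\psi$ is positivity-preserving, the bound follows.

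Next I would decompose $f = f_1 + f_2$ with $f_1 = f\ind_V$ and $f_2 = f\ind_{\X\setminus\overline V}$, using $m(\partial V)=0$ to discard the boundary. For $f_1$, formula~\eqref{eq:pig1} gives $\pi_\psi f_1(x) = G_\psi(\psi f_1)(x)$ on $V$, so by duality $G_\psi(x,y)=\hat G_\psi(y,x)$,
\[
 \int_V \pi_\psi f_1(x)\,\ph(x)\,m(dx) = \int_V \psi(y)\,f(y)\,\hat G_\psi\ph(y)\,m(dy).
\]
Applying the key bound and then the upper bound $\psi\ph\le c_4$ from the hypothesis converts this to at most $(c_4/c_3)\int_V f\,m$. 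For $f_2$, Corollary~\ref{corr:gpsipsi} identifies $\pi_\psi f_2(x)$ with $\ex_x(f_2(X_{\tau_V})\,M_{\tau_V-})$, and the Ikeda--Watanabe formula~\eqref{eq:psiiw} for $X^\psi_t$ with $U=V$ expresses this as $\int_V G_\psi(x,y)\,\nu f_2(y)\,m(dy)$. Integrating against $\ph(x)\,m(dx)$, swapping order, using $\nu(y,z)=\hat\nu(z,y)$ and then the same pointwise bound on $\hat G_\psi\ph$ gives
\[
 \int_V \pi_\psi f_2(x)\,\ph(x)\,m(dx) \le \frac{1}{c_3}\int_{\X\setminus\overline V} f(z)\int_V \hat\nu(z,y)\,\ph(y)\,m(dy)\,m(dz).
\]
For $z\in\X\setminus\overline V = \X\setminus\supp\ph$, the inner integral is $\hat\nu\ph(z)$, which by the dual of~\eqref{eq:lk} (applied as noted just below it: if $f\in\dom(\hat\A)$ and $z\notin\supp f$ then $\hat\nu f(z)=\hat\A f(z)$) equals $\hat\A\ph(z)$. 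Adding the two bounds, multiplying by $c_3$, and using $m(\partial V)=0$ to replace $\X\setminus\overline V$ by $\X\setminus V$ on the right-hand side gives~\eqref{eq:pipsiint}.

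The main obstacle I expect is the contribution of any mass that $\pi_\psi$ places on $\partial V$, coming from continuous exits of $X^\psi_t$ or from jumps with $X_{\tau_V}\in\partial V$, which the Ikeda--Watanabe formula~\eqref{eq:psiiw} (valid only for $E\sub\X\setminus\overline V$) does not describe. Because the right-hand side of~\eqref{eq:pipsiint} is insensitive to the values of $f$ on $\partial V$ (as $m(\partial V)=0$), I must argue that this extra mass does not break the inequality. I would handle this by noting that jumps of $X_t$ land in $\partial V$ with probability zero by the L\'evy-system identity~\eqref{eq:iw2} together with $m(\partial V)=0$, and that values of $f$ on $\partial V$ can be modified on this $m$-null set without changing either side for the intended application in the proof of Theorem~\ref{th:regexit}; equivalently, one proves the inequality first for $f$ vanishing in a neighborhood of $\partial V$ and passes to a monotone limit.
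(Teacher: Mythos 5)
Your computation of the two tractable pieces is sound and is essentially a dualized form of what the paper does: your key bound $c_3\,\hat G_\psi\ph\le\ph$ on $V$ is exactly the paper's identity $\hat G_\psi h=\ph$ with $h=\psi\ph-\hat\A\ph\ge c_3\ph$, and the estimates for $f\ind_V$ via \eqref{eq:pig1} and for $f\ind_{\X\setminus\overline V}$ via \eqref{eq:psiiw} are correct. The genuine gap is precisely the obstacle you flag at the end, and none of your three proposed resolutions closes it. The lemma is stated for \emph{every} nonnegative $f$, and the only place it is used in the paper is the proof of Lemma~\ref{lem:boundary2}, where it is applied to $f=\ind_{\partial V}$: the right-hand side of \eqref{eq:pipsiint} then vanishes, and the conclusion is that $\pi_\psi(\cdot,\partial V)=0$ a.e. That is exactly the case your decomposition discards. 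Concretely: (i) the L\'evy-system argument only shows that \emph{jump} exits land in $\partial V$ with probability zero; it says nothing about continuous exits, where $X_{\tau_V-}=X_{\tau_V}\in\partial V$ and $M_{\tau_V-}$ may a priori be positive. (ii) The claim that the left-hand side is insensitive to the values of $f$ on $\partial V$ presupposes $\pi_\psi(x,\partial V)=0$, which is Lemma~\ref{lem:boundary2} — proved in the paper \emph{from} \eqref{eq:pipsiint} — so this is circular. (iii) The monotone limit of functions vanishing near $\partial V$ converges to $f\ind_{\X\setminus\partial V}$, not to $f$, so it yields only the weakened statement.

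The paper avoids the problem by not decomposing the measure $\pi_\psi(x,\cdot)$ at all. For $f\in\dom(\A)$ it uses the exact Dynkin-type identity \eqref{eq:pig2}, $\pi_\psi f=f+G_\psi\A f$ on $V$, which accounts for \emph{all} the mass, including whatever $\pi_\psi(x,\cdot)$ places on $\partial V$; integrating against $h\,dm$, using duality and $\hat G_\psi h=\ph$, one gets the exact identity $\int_V\pi_\psi f\cdot h\,dm=\int_V f\,\psi\ph\,dm+\int_{\X\setminus V}f\,\hat\A\ph\,dm$, and then the two pointwise hypotheses on $\psi$. The passage to general nonnegative $f$ is by approximation: both sides of the resulting inequality are integrals against Radon measures, the inequality holds for all nonnegative $f\in\dom$, and the Urysohn property of $\dom$ upgrades this to an inequality of measures, hence to all nonnegative Borel $f$ — including $\ind_{\partial V}$. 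To repair your argument you would need to replace your splitting by this identity (or otherwise prove $\ex_x(M_{\tau_V-};X_{\tau_V}\in\partial V)=0$ by some independent means), since as written your proof establishes a statement too weak to feed into Lemma~\ref{lem:boundary2}.
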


\begin{proof}
First, suppose that $f \in \dom(\A)$. Denote $h(x) = -(\hat{\A} - \psi) \ph(x)$ for $x \in V$. Note that $h$ is nonnegative. Let $g(x) = \pi_\psi f(x)$ for $x \in \X$; hence $g(x) = f(x)$ for $x \in \X \setminus V$, see~\eqref{eq:pipsi2}. By~\eqref{eq:pig2}, we have $g(x) = f(x) + G_\psi \A f(x)$ for $x \in V$. Hence,
\begin{align*}
 \int_V g(x) h(x) m(dx) & = \int_V f(x) h(x) m(dx) + \int_V G_\psi \A f(x) h(x) m(dx) .
\end{align*}
For the second term, we have
\begin{align*}
 \int_V G_\psi \A f(x) h(x) m(dx) & = \int_V \A f(x) \hat{G}_\psi h(x) m(dx) .
\end{align*}
By~\eqref{eq:pig3} (dual version), $\hat{G}_\psi h(x) = -\hat{G}_\psi (\hat{\A} - \psi) \ph(x) = \ph(x)$ for $x \in V$. Hence,
\begin{align*}
 \int_V G_\psi \A f(x) h(x) m(dx) & = \int_V \A f(x) \ph(x) m(dx) = \int_\X f(x) \hat{\A} \ph(x) m(dx) .
\end{align*}
In the last equality, we used the fact that $\ph(x) = 0$ for $x \in \X \setminus V$. It follows that
\begin{align*}
 \int_V g(x) h(x) m(dx) & = \int_V f(x) h(x) m(dx) + \int_\X f(x) \hat{\A} \ph(x) m(dx) .
\end{align*}
But $h(x) = -(\hat{\A} - \psi) \ph(x)$, so that finally, after simplification,
\begin{align*}
 \int_V g(x) h(x) m(dx) & = \int_V f(x) \ph(x) \psi(x) m(dx) + \int_{\X \setminus V} f(x) \hat{\A} \ph(x) m(dx) .
\end{align*}
Using the inequalities $\psi(x) \ph(x) \le c_4$ for $x \in V$ and $h(x) = \psi(x) \ph(x) - \hat{\A} \ph(x) \ge c_3 \ph(x)$ for $x \in V$, we obtain~\eqref{eq:pipsiint}. The general case of nonnegative $f$ (not necessarily in $\dom(\A)$) follows by approximation.
\end{proof}

\begin{lemma}
\label{lem:boundary2}
Suppose that for some $c_3, c_4 > 0$, we have $\psi(x) \ge c_3 + (\ph(x))^{-1} \hat{\A} \ph(x)$ and $\psi(x) \le c_4 / \ph(x)$ for $x \in V$. Then $M(\tau_V-) \ind_{\partial V}(X(\tau_V)) = 0$ $\pr_x\as$ and $\pi_\psi(x, \partial V) = 0$ for all $x \in V$.
\end{lemma}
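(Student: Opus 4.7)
The plan is to apply Lemma~\ref{lem:pipsiint} to the indicator $f = \ind_{\partial V}$ and to show that both terms on the right-hand side of \eqref{eq:pipsiint} vanish. The starting point is to rewrite $\pi_\psi \ind_{\partial V}$ in useful form. Since $V$ is open and $X_t$ is right-continuous, $\ind_{\partial V}(X_t)=0$ whenever $t<\tau_V$, so in Corollary~\ref{corr:gpsipsi} the absolutely continuous part $\psi(X_t) M_t\,dt$ contributes nothing. The atomic part at $\tau_V$ therefore produces the identity
\begin{equation*}
 \pi_\psi \ind_{\partial V}(x) = \ex_x\bigl(M(\tau_V-)\,\ind_{\partial V}(X(\tau_V))\bigr),\qquad x\in\X.
\end{equation*}
Thus the two claims of the lemma are equivalent to showing $\pi_\psi \ind_{\partial V}(x)=0$ for every $x\in V$.

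Next I would apply Lemma~\ref{lem:pipsiint} with $f=\ind_{\partial V}$ (justified via the approximation argument invoked at the end of its proof, since $\hat\A\ph$ is bounded and $\pi_\psi$ is positivity-preserving). Because $V$ is open, $\partial V\cap V=\varnothing$, so $\int_V \ind_{\partial V}\,dm=0$; and by Assumption~\ref{assu:gen} we have $m(\partial V)=0$, so $\int_{\X\setminus V}\ind_{\partial V}\,\hat\A\ph\,dm=0$. Hence
\begin{equation*}
 c_3\int_V \pi_\psi\ind_{\partial V}(x)\,\ph(x)\,m(dx)\le 0,
\end{equation*}
which forces $\pi_\psi \ind_{\partial V}(x)=0$ for $m$-a.e.\ $x\in V$, as $\ph>0$ on $V$.

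It remains to promote this almost-everywhere statement to an everywhere statement, which is the main technical point. Set $u(x)=\pi_\psi\ind_{\partial V}(x)$. I would verify that $u$ is excessive with respect to the subprocess $X^\psi_t$: by the strong Markov property applied at a deterministic time $t$, splitting on $\{t<\tau_V\}$ and $\{t\ge\tau_V\}$ gives
\begin{equation*}
 u(x)=\ex_x\bigl(u(X_t)M_t;\,t<\tau_V\bigr)+\ex_x\bigl(M(\tau_V-)\ind_{\partial V}(X(\tau_V));\,t\ge\tau_V\bigr),
\end{equation*}
so $u\ge T^\psi_t u$, and as $t\searrow 0$ the second term vanishes (since $\pr_x(\tau_V=0)=0$ for $x\in V$) by dominated convergence, giving $T^\psi_t u(x)\to u(x)$. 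The $\alpha$-potential kernel of $X^\psi_t$ is dominated by that of the process killed on leaving $V$, hence is absolutely continuous with respect to $m$. Proposition~\ref{prop:aa} applied to $Y_t=X^\psi_t$, to the excessive function $u$, and to the continuous majorant $g\equiv 0$, then yields $u(x)\le 0$, hence $u(x)=0$, on any ball contained in $V$, and thus on all of $V$. Consequently $\ex_x(M(\tau_V-)\ind_{\partial V}(X(\tau_V)))=0$ and $\pi_\psi(x,\partial V)=0$ for every $x\in V$, which are the two claims in the lemma.

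The main obstacle is the final a.e.\ to everywhere upgrade: one needs to identify a process for which $u$ is genuinely excessive and whose potential kernel is absolutely continuous so that Proposition~\ref{prop:aa} applies. The subprocess $X^\psi_t$ is the natural candidate, but some care is needed because $u$ is defined on all of $\X$ while $X^\psi_t$ lives on $V$, and one must correctly handle the left limit $M(\tau_V-)$ at the (possibly positive) mass lost at the boundary.
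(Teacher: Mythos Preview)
Your proposal is correct and follows essentially the same route as the paper: apply Lemma~\ref{lem:pipsiint} to $f=\ind_{\partial V}$ to obtain $\pi_\psi(x,\partial V)=0$ for $m$-a.e.\ $x\in V$, then observe that $g(x)=\pi_\psi(x,\partial V)=\ex_x(M(\tau_V-);X(\tau_V)\in\partial V)$ is excessive for $T^\psi_t$ and invoke Proposition~\ref{prop:aa} (equivalently, \cite[Proposition~II.3.2]{MR0264757}) to upgrade to all $x\in V$. Your concern at the end is not an obstruction: one works entirely on $V$, where $X^\psi_t$ is a standard Markov process with absolutely continuous potential kernels, and the identity $g(x)=\ex_x(M(\tau_V-)\ind_{\partial V}(X(\tau_V)))$ already handles the left limit correctly.
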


\begin{proof}
For $x \in V$ define $g(x) = \pi_\psi(x, \partial V)$. By~\eqref{eq:pipsiint}, $\int_V g(x) \ph(x) m(dx) = 0$, so that $g$ vanishes almost everywhere in $V$. We claim that $g$ is excessive for the transition semigroup $T^\psi_t$ of $X^\psi_t$. Indeed, we have $g(x) = \ex_x(M(\tau_V-); X(\tau_V) \in \partial V)$, so that by the Markov property, for any $t > 0$ and $x \in V$,
\begin{align*}
 \ex_x (M_t g(X_t)) & = \ex_x (M_t g(X_t) ; t < \tau_V) = \ex_x(M_{\tau_V-}; X_{\tau_V} \in \partial V , \, t < \tau_V) .
\end{align*}
The right-hand side does not exceed $g(x)$, and by monotone convergence, it converges to $g(x)$ as $t \searrow 0$. Hence $g$ is an excessive function equal to zero almost everywhere in $V$. By~\cite{MR0264757}, Proposition~II.3.2 (or by Proposition~\ref{prop:aa}), $g(x) = 0$ for all $x \in V$.
\end{proof}

Recall that according to the remark following Lemma~\ref{lem:boundary1}, we keep assuming that $\psi(x) \ge c_1 (\ph(x))^{-1} - c_2$ for $x \in V$. Consider $\tilde{\psi}(x) = c_1^{-1} \delta (\psi(x) + c_2) + c_3$ for some $c_3 > 0$, and let $\tilde{M}_t$ be the multiplicative functional defined in a similar manner as $M_t$, but with $\psi$ replaced by $\tilde{\psi}$. Clearly, for all $t > 0$ we have $M_t = 0$ if and only if $\tilde{M}_t = 0$. Since $\tilde{\psi}(x) \ge c_3 + \delta / \ph(x)$, an application of Lemma~\ref{lem:boundary2} to $\tilde{\psi}$ yields the following result.

\begin{corollary}
\label{corr:boundary2}
Suppose that for some $c > 0$, we have $\psi(x) \le c / \ph(x)$ for $x \in V$. Then $M(\tau_V-) \ind_{\partial V}(X(\tau_V)) = 0$ $\pr_x\as$ for $x \in V$. In particular, $\pi_\psi(x, \partial V) = 0$ for $x \in V$.
\qed
\end{corollary}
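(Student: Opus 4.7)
My plan is to follow the hint given in the paragraph immediately preceding the corollary, namely to reduce to Lemma~\ref{lem:boundary2} by passing to a rescaled weight $\tilde\psi$. Concretely, I would fix some $c_3 > 0$ and define
\[
 \tilde\psi(x) = c_1^{-1} \delta (\psi(x) + c_2) + c_3, \qquad x \in V,
\]
where $c_1, c_2$ are the constants from the standing lower bound $\psi(x) \ge c_1/\ph(x) - c_2$ reserved after Lemma~\ref{lem:boundary1}, and $\delta$ is as in~\eqref{eq:ddelta}. Let $\tilde A_t = \int_0^t \tilde\psi(X_s)ds$ and $\tilde M_t = e^{-\tilde A_t}$ be the corresponding additive and multiplicative functionals.

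Next I would verify that $\tilde\psi$ meets both hypotheses of Lemma~\ref{lem:boundary2}. For the lower bound, the inequality $\psi(x) + c_2 \ge c_1/\ph(x)$ gives $\tilde\psi(x) \ge \delta/\ph(x) + c_3 \ge c_3 + (\ph(x))^{-1}\hat\A\ph(x)$, because $\hat\A\ph \le \delta$ everywhere by~\eqref{eq:ddelta}. For the upper bound, the hypothesis $\psi(x) \le c/\ph(x)$ together with the normalization $\ph \le 1$ yields $\tilde\psi(x) \le c_4/\ph(x)$ for a suitable constant $c_4$. Lemma~\ref{lem:boundary2} then applies to $\tilde\psi$ and produces $\tilde M(\tau_V-)\ind_{\partial V}(X(\tau_V)) = 0$ $\pr_x\as$

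To transfer this back to $M$, I would observe that
\[
 \tilde A_t = c_1^{-1}\delta\, A_t + (c_1^{-1}\delta c_2 + c_3)\, t.
\]
Since $\tau_V < \infty$ $\pr_x\as$ by Proposition~\ref{prop:tau}, the second summand stays finite at $t = \tau_V-$, so $\tilde A(\tau_V-) = \infty$ if and only if $A(\tau_V-) = \infty$, which is equivalent to $\tilde M(\tau_V-) = 0 \iff M(\tau_V-) = 0$. This gives the first assertion. For the second assertion, I would use Corollary~\ref{corr:gpsipsi} applied to $\tau = \infty$ and $f = \ind_{\partial V}$: the Lebesgue integral $\int_0^{\tau_V} \ind_{\partial V}(X_t)\psi(X_t)M_t\,dt$ vanishes because $X_t \in V$ (an open set disjoint from $\partial V$) for $t < \tau_V$, while the remaining term $\ex_x[M(\tau_V-)\ind_{\partial V}(X(\tau_V))]$ is exactly what was shown to be zero, so $\pi_\psi(x,\partial V) = 0$.

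The only real subtlety is bookkeeping the left limits at $\tau_V$: one has to be sure that the argument controls the boundary behaviour of $M_t$ and not only the interior values, and that the rescaling preserves this. The key point that makes everything go through is that $\tilde A_t$ is an affine function of $A_t$ with a finite deterministic correction, so blow-up of one functional at the left limit is exactly blow-up of the other. Once this observation is in place, the rest is a direct invocation of Lemma~\ref{lem:boundary2}.
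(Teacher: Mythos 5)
Your proposal is correct and is exactly the paper's argument: the paper proves this corollary by the same rescaling $\tilde\psi = c_1^{-1}\delta(\psi + c_2) + c_3$ in the paragraph immediately preceding the statement, checking the two hypotheses of Lemma~\ref{lem:boundary2} just as you do and observing that $M_t$ and $\tilde M_t$ vanish simultaneously. Your extra care with the left limit at $\tau_V$ (via the affine relation between $A_t$ and $\tilde A_t$ and the a.s.\ finiteness of $\tau_V$) and the derivation of $\pi_\psi(x,\partial V)=0$ from Corollary~\ref{corr:gpsipsi} merely fill in details the paper leaves implicit.
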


Now we make the actual choice of $\psi$.

\begin{lemma}
\label{lem:pipsiest}
Let $\delta$ be given by \eqref{eq:ddelta}, and
\begin{align}
\label{eq:psi}
 \psi(x) & = \frac{\max(\A \ph(x), \hat{\A} \ph(x), \delta (1 - \ph(x)))}{\ph(x)} \, , && x \in \X \cup \set{\partial} ,
\end{align}
where $1 / 0 = \infty$. For all $x \in B(x_0, r)$ we have $\pi_\psi(x, dy) \le \tilde\pi_\psi(y) m(dy)$, where
\begin{align}
\label{eq:pipsiest}
 \tilde\pi_\psi(y) = c_{\eqref{eq:gpsiest}} \expr{\delta \ind_{V \setminus B(x_0, q)}(y) + 2 \min(\delta, \hat{\nu}(y, V)) \ind_{\X \setminus V}(y)}
\end{align}
with $c_{\eqref{eq:gpsiest}} = c_{\eqref{eq:gpsiest}}(x_0, r, p, q, R)$ given in Lemma~\ref{lem:gpsi}.
\end{lemma}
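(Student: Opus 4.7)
The plan is to verify that the specific $\psi$ given by \eqref{eq:psi} satisfies the hypotheses of Lemma~\ref{lem:gpsi} and its dual counterpart and of Corollary~\ref{corr:boundary2}, and then to bound the density of $\pi_\psi(x,\cdot)$ separately on the three regions $B(x_0,q)$, $V \setminus B(x_0,q)$, and $\X \setminus \overline{V}$. Since $\max(\A\ph,\hat{\A}\ph,\delta(1-\ph)) \le \delta$ pointwise, one has $\psi \le \delta/\ph$ on $V$ (so Corollary~\ref{corr:boundary2} applies and $\pi_\psi(x,\partial V)=0$); the same formula gives $(\A-\psi)\ph \le 0$ and $(\hat{\A}-\psi)\ph \le 0$ on $V$, which are the assumptions of Lemma~\ref{lem:gpsi} and of its dual version. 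Finally, for $z \in B(x_0,q)$ we have $\ph(z)=1$, and since $\ph \le 1$ and the semigroup is sub-Markov, $T_t\ph(z) \le 1 = \ph(z)$, forcing $\A\ph(z) \le 0$ and similarly $\hat{\A}\ph(z) \le 0$; combined with $\delta(1-\ph(z))=0$ this gives $\psi \equiv 0$ on $B(x_0,q)$.

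Given these preliminaries, the case $y \in V$ is handled by \eqref{eq:pig1}, according to which the density of $\pi_\psi(x,dy)$ on $V$ equals $G_\psi(x,y)\psi(y)$. On $B(x_0,q)$ this vanishes, matching $\tilde{\pi}_\psi \equiv 0$ there. On $V \setminus B(x_0,q) \subset V \setminus B(x_0,p)$, the dual version of Lemma~\ref{lem:gpsi} yields $G_\psi(x,y) = \hat{G}_\psi(y,x) \le c_{\eqref{eq:gpsiest}}\ph(y)$, and combining with $\psi(y)\ph(y) \le \delta$ gives the bound $c_{\eqref{eq:gpsiest}}\delta$ asserted by \eqref{eq:pipsiest}.

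The substantive case is $y \in \X \setminus \overline{V}$. The Ikeda--Watanabe formula \eqref{eq:psiiw} applied with $U=V$ (using $G^\psi_V = G_\psi$ on $V \times V$, since $M_t = 0$ for $t \ge \tau_V$) represents the density as
\[
 \pi_\psi(x,y) = \int_V G_\psi(x,z)\,\hat{\nu}(y,z)\,m(dz),
\]
which I would split at radius $p$. On the outer piece $z \in V \setminus B(x_0,p)$ the dual-Lemma~\ref{lem:gpsi} bound $G_\psi(x,z) \le c_{\eqref{eq:gpsiest}}\ph(z)$ collapses the integral to $c_{\eqref{eq:gpsiest}}\hat{\A}\ph(y)$; since $y \notin \overline{V}$, the L\'evy-system identity $\hat{\A}\ph(y) = \int_V \ph(z)\hat{\nu}(y,z)m(dz)$ together with $\ph \le 1$ yields $\hat{\A}\ph(y) \le \min(\delta,\hat{\nu}(y,V))$. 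On the inner piece $z \in B(x_0,p)$ and $y \in \X \setminus B(x_0,q)$, Assumption~\ref{assu:levy} replaces $\hat{\nu}(y,z)$ by $c_{\eqref{eq:nu}}\hat{\nu}(y,x_0)$, and Proposition~\ref{prop:tau} gives $\int_{B(x_0,p)} G_\psi(x,z)\,m(dz) \le \ex_x \tau_V \le c_{\eqref{eq:tau}}$, producing a bound of order $c_{\eqref{eq:nu}} c_{\eqref{eq:tau}}\hat{\nu}(y,x_0)$.

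The main obstacle is controlling the single quantity $\hat{\nu}(y,x_0)$, which a priori blows up as $y$ approaches $\partial V$; the trick is that \emph{two} different lower bounds for $\hat{\nu}(y,V)$ are available. The lower half of Assumption~\ref{assu:levy} gives $\hat{\nu}(y,V) \ge c_{\eqref{eq:nu}}^{-1}\hat{\nu}(y,x_0)m(B(x_0,p))$, while the chain $\delta \ge \hat{\A}\ph(y) \ge \hat{\nu}(y,B(x_0,p)) \ge c_{\eqref{eq:nu}}^{-1}\hat{\nu}(y,x_0)m(B(x_0,p))$ supplies the analogous bound by $\delta$. Together they force $\hat{\nu}(y,x_0) \le c_{\eqref{eq:nu}}\min(\delta,\hat{\nu}(y,V))/m(B(x_0,p))$, so that the inner contribution is at most $c_{\eqref{eq:nu}}^2 c_{\eqref{eq:tau}}\min(\delta,\hat{\nu}(y,V))/m(B(x_0,p))$. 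Summing with the outer contribution and using that the definition of $c_{\eqref{eq:gpsiest}}$ already contains $c_{\eqref{eq:nu}}^2 c_{\eqref{eq:tau}}/m(B(x_0,p))$ as a summand, the total is bounded by $2c_{\eqref{eq:gpsiest}}\min(\delta,\hat{\nu}(y,V))$; the factor $2$ in \eqref{eq:pipsiest} is precisely the sum of these two regional contributions.
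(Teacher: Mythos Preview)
Your proof is correct and follows essentially the same route as the paper: verify the hypotheses on $\psi$, dispose of $B(x_0,q)$ and $\partial V$, use~\eqref{eq:pig1} with the dual of~\eqref{eq:gpsiest} on $V\setminus B(x_0,q)$, and for $\X\setminus\overline{V}$ split the Ikeda--Watanabe integral at radius $p$ and absorb the inner piece into the constant $c_{\eqref{eq:gpsiest}}$. The only cosmetic differences are that you invoke~\eqref{eq:psiiw} with $U=V$ directly (the paper instead re-derives it from~\eqref{eq:pig2} and an approximation argument), and in the inner piece you pass straight to $\min(\delta,\hat{\nu}(y,V))$ rather than routing through the intermediate quantity $\hat{\A}\ph(y)$; the underlying inequalities are identical.
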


\begin{proof}
Note that $\psi(x) \ge \delta (\ph(x))^{-1} - \delta$, $(\A - \psi) \ph(x) \le 0$, $(\hat{\A} - \psi) \ph(x) \le 0$ and $\psi(x) \le \delta / \ph(x)$ for $x \in V$. Hence, we may apply Lemmas~\ref{lem:boundary1}, \ref{lem:pipsiuest} and~\ref{lem:gpsi}, Corollary~\ref{corr:boundary2}, and their dual versions. By Corollary~\ref{corr:boundary2}, $\pi_\psi(x, \partial V) = 0$ for all $x \in V$. Since $\A \ph(x) \le 0$ and $\hat{\A} \ph(x) \le 0$ for $x \in B(x_0, q)$, we have $\psi(x) = 0$ for $x \in B(x_0, q)$, and therefore $\pi_\psi(x, B(x_0, q)) = 0$ for all $x \in V$.

Fix $x \in B(x_0, r)$. If $f$ is nonnegative and vanishes in $B(x_0, q)$ and in $\X \setminus V$, then~\eqref{eq:pig1} yields that
\begin{align*}
 \pi_\psi f(x) & = G_\psi (\psi f)(x) = \int_{V \setminus B(x_0, q)} G_\psi(x, y) \psi(y) f(y) m(dy) .
\end{align*}
Using~\eqref{eq:gpsiest} for $\hat{G}_\psi$ and the inequality $\ph(y) \psi(y) \le \delta$ for $y \in V$, we have
\begin{align}
\label{eq:pipsiest2}
 \pi_\psi f(x) & \le c_{\eqref{eq:gpsiest}} \int_{V \setminus B(x_0, q)} \ph(y) \psi(y) f(y) m(dy) \le c_{\eqref{eq:gpsiest}} \delta \int_{V \setminus B(x_0, q)} f(y) m(dy) ,
\end{align}
with constant $c_{\eqref{eq:gpsiest}}(x_0, r, p, q, R)$. Suppose now that $f \in \dom(\A)$ vanishes in $V$. By~\eqref{eq:pig2},
\begin{align*}
 \pi_\psi f(x) & = G_\psi \A f(x) = \int_V G_\psi(x, y) \expr{\int_{\X \setminus V} f(z) \nu(y, z) m(dz)} m(dy) \\
 & = \int_{\X \setminus V} \expr{\int_V G_\psi(x, y) \nu(y, z) m(dy)} f(z) m(dz) .
\end{align*}
We estimate the inner integral for $z \in \X \setminus V$. Using~\eqref{eq:gpsiest} for $\hat{G}_\psi$, we have
\begin{align*}
 \int_{V \setminus B(x_0, p)} G_\psi(x, y) \nu(y, z) m(dy) & \le c_{\eqref{eq:gpsiest}} \int_{V \setminus B(x_0, p)} \ph(y) \nu(y, z) m(dy) = c_{\eqref{eq:gpsiest}} \hat{\A} \ph(z) .
\end{align*}
The integral over $B(x_0, p)$ is estimated as in the proof of Lemma~\ref{lem:gpsi},
\begin{align*}
 \int_{B(x_0, p)} G_\psi(x, y) \nu(y, z) m(dy) & \le c_{\eqref{eq:nu}} \nu(x_0, z) \int_{B(x_0, p)} G_V(x, y) m(dy) \\
 & \le c_{\eqref{eq:nu}} \nu(x_0, z) \ex_x \tau_V \le c_{\eqref{eq:nu}} c_{\eqref{eq:tau}} \nu(x_0, z) \\
 & \le \frac{c_{\eqref{eq:tau}} (c_{\eqref{eq:nu}})^2}{m(B(x_0, p))} \, \hat{\nu}(z, B(x_0, p)) \le c_{\eqref{eq:gpsiest}} \hat{\A} \ph(z) ,
\end{align*}
with constants $c_{\eqref{eq:nu}}(x_0, p, q)$, $c_{\eqref{eq:tau}}(x_0, R)$ and $c_{\eqref{eq:gpsiest}}(x_0, r, p, q, R)$. Since $\hat{\A} \ph(z) \le \delta$ and $\hat{\A} \ph(z) \le \hat{\nu}(z, V)$, we obtain that
\begin{align}
\label{eq:pipsiest3}
 \pi_\psi f(x) & {\le 2 c_{\eqref{eq:gpsiest}} \int_{\X \setminus V} f(z) \hat{\A} \ph(z) m(dz)} \le {2} c_{\eqref{eq:gpsiest}} \int_{\X \setminus V} f(z) \min(\delta, \hat{\nu}(z, V)) m(dz) .
\end{align}
By approximation, \eqref{eq:pipsiest3} holds for any nonnegative $f$ vanishing in $\overline{V}$. Formula~\eqref{eq:pipsiest} is a combination of~\eqref{eq:pipsiest2}, \eqref{eq:pipsiest3}, $\pi_\psi(x, \partial V) = 0$ and $\pi_\psi(x, B(x_0, q)) = 0$ for all $x \in V$.
\end{proof}

\begin{lemma}
\label{lem:subharmonic}
If a nonnegative function $f$ is regular subharmonic in $B(x_0, R)$, then $f(x) \le \pi_\psi f(x)$ for $x \in B(x_0, r)$. If $f$ is regular harmonic, then equality holds.
\end{lemma}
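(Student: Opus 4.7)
The plan is to exploit the integral representation
\[
 \pi_\psi f(x) = \int_0^\infty \ex_x\bigl(f(X_{\tau_a})\bigr)\,e^{-a}\,da,
\]
which was established in \eqref{eq:pipsi1}, and then apply regular subharmonicity to each $X_{\tau_a}$ separately.

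First I would verify that for every $a\ge 0$ the random time $\tau_a=\inf\{t\ge 0:A_t\ge a\}$ is an $\M_t$-stopping time (this is automatic from $\{\tau_a\le t\}=\{A_t\ge a\}$ and the continuity of $s\mapsto A_s$ on $[0,\tau_V)$), and that $\tau_a\le\tau_V\le\tau_{B(x_0,R)}$. The second inequality holds because $V\sub B(x_0,R)$ by construction (see Figure~\ref{fig:1}); the first is the definition of $\tau_a$ together with Lemma~\ref{lem:boundary1}, which guarantees $A(\tau_V)=\infty$ so that $\tau_a\le\tau_V$. Thus $\tau_a$ is a bona fide stopping time dominated by $\tau_{B(x_0,R)}$.

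Next, since $f$ is nonnegative and regular subharmonic in $B(x_0,R)$, the definition immediately gives
\[
 f(x)\le \ex_x f(X_{\tau_a}),\qquad x\in B(x_0,r),\ a\ge 0.
\]
Multiplying by $e^{-a}$ and integrating against $da$ over $[0,\infty)$, Fubini (legitimate because $f\ge 0$) combined with the representation of $\pi_\psi f$ recalled above yields
\[
 \pi_\psi f(x)=\int_0^\infty \ex_x f(X_{\tau_a})\,e^{-a}\,da\ge f(x)\int_0^\infty e^{-a}\,da = f(x),
\]
which is the desired inequality. If $f$ is in fact regular harmonic in $B(x_0,R)$, then $f(x)=\ex_x f(X_{\tau_a})$ for every $a$, and the same computation gives equality.

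The only point that deserves some care is the representation \eqref{eq:pipsi1} itself in the presence of a possible atom of $dM_t$ at $t=\tau_V$. Since $M(\tau_V)=0$ by Lemma~\ref{lem:boundary1}, this atom contributes $M_{\tau_V-}f(X_{\tau_V})$ to $\pi_\psi f$, which corresponds exactly to the range $a\in[A(\tau_V-),\infty)$ in the integral over $a$, where $\tau_a=\tau_V$. The continuous part of $dM_t$ on $[0,\tau_V)$ corresponds to $a\in[0,A(\tau_V-))$ via the substitution $a=A_t$, $da=\psi(X_t)\,dt$. Hence the formula used at the outset is valid in full generality, and no further adjustment is needed.
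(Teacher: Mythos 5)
Your proof is correct and follows the same route as the paper's: apply the definition of regular subharmonicity to the stopping times $\tau_a \le \tau_V \le \tau_{B(x_0,R)}$ and integrate against $e^{-a}\,da$ using the representation \eqref{eq:pipsi1}. The extra care you take with the atom of $dM_t$ at $\tau_V$ is sound but already built into the derivation of \eqref{eq:pipsi1} in the paper.
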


\begin{proof}
If $f$ is regular subharmonic in $V$, then 
$f(x) \le \ex_x (f(X(\tau_a)))$ for all $a \in [0, \infty]$. If $f$ is regular harmonic in $V$, then equality holds. The result follows by~\eqref{eq:pipsi1}.
\end{proof}

The local maximum estimate is now proved as follows.

\begin{proof}[Proof of Theorem~\ref{th:regexit}]
Fix $p \in (r, q)$. Choose $\eps > 0$ and $\ph$ as in the beginning of this section, and so that $\delta = \sup_{x \in \X} \max(\A \ph(x), \hat{\A} \ph(x)) < \ro(\overline{B}(x_0, q), B(x_0, R)) + \eps$. Define $\psi$ as in~\eqref{eq:psi}. By Lemmas~\ref{lem:pipsiest} and~\ref{lem:subharmonic}, we have~\eqref{eq:regexit1} with $\pi_{x_0, r, q, R}(y)$ bounded from above by $\pi_\psi(y)$ defined in~\eqref{eq:pipsiest}. Note that $\hat{\nu}(y, V) \le \hat{\nu}(y, B(x_0, R))$. Since $\eps > 0$ and $p \in (r, q)$ are arbitrary, formulas~\eqref{eq:regexit2} and~\eqref{eq:regexit3} follow.
\end{proof}

We conclude this section with a result on diffusion processes. The above argument remains valid when $\nu$ vanishes everywhere, i.e., $X_t$ is a diffusion process. In this case~\eqref{eq:tau} is not a consequence of Assumption~\ref{assu:levy}, so we need to add~\eqref{eq:tau} as an assumption. No other changes in the argument are needed, and in fact the proof of Lemma~\ref{lem:gpsi} simplifies significantly, since $X_t$ exits $U$ through the boundary of $U$, and therefore $X(\tau_U)$ is never in $B(x_0, p)$. Therefore, we have proved the following result.

\begin{theorem}
\label{th:diffusions}
Assume that $X_t$ is a diffusion process satisfying Assumptions~\ref{assu:dual}, \ref{assu:gen} and~\ref{assu:green}, and formula~\eqref{eq:tau}. Let $x_0 \in \X$ and $0 < r < q < R < R_0$, where $R_0$ is the localization radius of~\ref{eq:tau} and Assumption~\ref{assu:green}. Let $f$ be a nonnegative function on $B(x_0, R)$, regular subharmonic in $B(x_0, R)$ with respect to $X_t$. Then
\begin{align}
\label{eq:diffusions}
 f(x) & \le c_{\eqref{eq:diffusions}} \int_{\overline{A}(x_0, q, R)} f(y) m(dy) , && x \in B(x_0, r) .
\end{align}
Here $c_{\eqref{eq:diffusions}} = c_{\eqref{eq:diffusions}}(x_0, r, q, R) = c_{\eqref{eq:green}} \delta$, where $\delta = \ro(\overline{B}(x_0, q), B(x_0, R))$ and $c_{\eqref{eq:green}} = c_{\eqref{eq:green}}(x_0, r, q, R)$ are defined in Assumptions~\ref{assu:gen} and~\ref{assu:green}.
\end{theorem}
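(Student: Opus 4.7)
The plan is to reuse the construction of Section~\ref{sec:reg} verbatim and exploit the absence of jumps to show that the sub-kernel $\pi_\psi$ is supported inside $V$ with a bounded density, so no step that needs Assumption~\ref{assu:levy} is ever invoked. First I would pick a bump function $\ph \in \dom$ for the compact-open pair $\overline{B}(x_0, q) \subset B(x_0, R)$, set $\delta = \sup_x \max(\A\ph, \hat{\A}\ph)$ close to $\ro(\overline{B}(x_0,q), B(x_0,R))$, let $V = \{\ph > 0\}$, and define $\psi$, $A_t$, $M_t$, $\pi_\psi$, $G_\psi$ exactly as in~\eqref{eq:psi}, \eqref{eq:m} and the surrounding discussion. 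Lemmas~\ref{lem:boundary1}, \ref{lem:psigen}, \ref{lem:pipsiuest} and Corollary~\ref{corr:boundary2} rely only on Assumption~\ref{assu:gen}, on Dynkin's formula~\eqref{eq:d}, and on~\eqref{eq:tau} (now an assumption), never on the L\'evy kernel, so they transfer without change.

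The first critical point is to locate the support of $\pi_\psi$. Since $X_t$ is a diffusion, $X(\tau_V) \in \partial V$ $\pr_x$-a.s.; combined with $M(\tau_V-) = 0$ from Lemma~\ref{lem:boundary1}, the identity~\eqref{eq:gpsipsi} with $\tau = \infty$ shows that no mass can be deposited outside $V$, giving $\pi_\psi(x, \X \setminus V) = 0$. Corollary~\ref{corr:boundary2} kills the $\partial V$ component, and the fact that $\A\ph, \hat{\A}\ph \le 0$ on $B(x_0, q)$ implies $\psi \equiv 0$ there and hence $\pi_\psi(x, B(x_0, q)) = 0$. What remains is supported in $V \setminus \overline{B}(x_0, q) \subseteq \overline{A}(x_0, q, R)$, with density $G_\psi(x, y)\psi(y)$ by~\eqref{eq:pig1}.

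Next I would establish the diffusion analogue of Lemma~\ref{lem:gpsi}, namely $G_\psi(x, y) \le c_{\eqref{eq:green}}(x_0, r, q, R)\,\ph(y)$ for $x \in B(x_0, r)$ and $y \in V \setminus \overline{B}(x_0, q)$, which by duality is the estimate $\hat{G}_\psi(y, x) \le c_{\eqref{eq:green}}\,\ph(y)$. I would follow the scheme of Lemma~\ref{lem:gpsi} applied to $\hat{X}_t$ with $U = V \setminus \overline{B}(x_0, q)$: for a probability density $f$ on $B(x_0, r)$ the function $g = \hat{G}_\psi f$ satisfies $g(y) = \hat{\pi}^\psi_U g(y)$ because $f$ vanishes on $U$. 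The simplification, as the author's remark anticipates, is that path continuity of $\hat{X}_t$ together with $\hat{M}(\hat{\tau}_V-) = 0$ forces $\hat{X}(\hat{\tau}_U)$ to lie almost surely on $\partial U \setminus \partial V \subseteq V \cap \{d(x_0, \cdot) = q\}$, which is disjoint from $B(x_0, r)$. The dual form of Assumption~\ref{assu:green} then bounds $g$ by $c_{\eqref{eq:green}}(x_0, r, q, R)$ on that set, and Lemma~\ref{lem:pipsiuest} yields $g(y) \le c_{\eqref{eq:green}}\,\ph(y)$; Proposition~\ref{prop:aa} upgrades this from almost every $x$ to every $x$. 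In particular, the ``inner'' term in the original proof of Lemma~\ref{lem:gpsi} (the jump contribution from $B(x_0,p)$) disappears, which is why no auxiliary radius $p$ is needed here and the constant reduces to $c_{\eqref{eq:green}}\,\delta$.

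Combining Lemma~\ref{lem:subharmonic} with these two bounds, for any nonnegative regular subharmonic $f$ and any $x \in B(x_0, r)$,
\[
 f(x) \le \pi_\psi f(x) = \int_{V \setminus \overline{B}(x_0, q)} G_\psi(x, y)\,\psi(y)\,f(y)\,m(dy) \le c_{\eqref{eq:green}} \int_{V \setminus \overline{B}(x_0, q)} \ph(y)\,\psi(y)\,f(y)\,m(dy),
\]
and since $\ph\,\psi \le \delta$ on $V$ and the range of integration lies in $\overline{A}(x_0, q, R)$, letting $\delta \searrow \ro(\overline{B}(x_0, q), B(x_0, R))$ gives~\eqref{eq:diffusions} with $c_{\eqref{eq:diffusions}} = c_{\eqref{eq:green}}\,\delta$. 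The main obstacle that I expect is the rigorous verification that $\pi_\psi(x, \X \setminus V) = 0$ for the diffusion: the intuition (continuous trajectories plus vanishing left-limit of $M_t$ at $\tau_V$) is clear, but one must handle the left-versus-right limits of $M_t$ at $\tau_V$ carefully via Corollary~\ref{corr:gpsipsi} to rule out any residual Dirac mass at the boundary of $V$.
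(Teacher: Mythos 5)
Your proposal is correct and follows essentially the same route as the paper, whose proof of Theorem~\ref{th:diffusions} is precisely the observation that the Section~\ref{sec:reg} construction goes through unchanged, with Lemma~\ref{lem:gpsi} simplifying because a diffusion exits $U$ through $\partial U$ and so $X(\tau_U)$ never lands in $B(x_0,p)$ — exactly the simplification you identify. The only slip is attributing $M(\tau_V-)=0$ to Lemma~\ref{lem:boundary1} (which gives $M(\tau_V)=0$); the vanishing of the boundary contribution is supplied by Corollary~\ref{corr:boundary2}, which you invoke correctly in the next clause.
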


\begin{remark}
\label{rem:dirichlet}
For diffusion processes, local supremum estimate~\eqref{eq:diffusions} for subharmonic functions is typically proved analytically, using Sobolev embeddings and Moser iteration, see, e.g., \cite{MR737190}. Theorem~\ref{th:regexit} requires more regularity of the process $X_t$ as compared to the analytical approach because we assume the existence of bump functions in the domain of the Feller generator (Assumption~\ref{assu:gen}), while Moser iteration is based on the energy form. However, our approach does not depend on Sobolev embeddings, and so it applies also to Sierpi{\'n}ski carpets and some other highly irregular state spaces $\X$. It would be interesting to find an analytical proof of the local supremum estimate for jump-type processes, which would not require Assumption~\ref{assu:gen}. Related results have been recently studied when the L\'evy kernel $\nu(x, y)$ is comparable to $(d(x, y))^{-d - \alpha}$ (see~\cite{MR2448308} and the references therein). Further comments on this subject are given in Example~\ref{ex:stable-like} and Appendix~\ref{sec:app}.
\end{remark}

%
%

\section{Extensions and examples}
\label{sec:ex}

In this section we study several applications of our boundary Harnack inequality, and discuss limitations of Theorem~\ref{th:bhi}. We sketch the range of possible applications by indicating rather general classes of processes satisfying the assumptions of Theorem~\ref{th:bhi}, without getting into technical details. Before that, however, we discuss an important notion of scale-invariance introduced in Remark~\ref{rem:si}. This property can be proved in a fairly general setting, which we call \emph{stable-like scaling}.

\begin{definition}
\label{def:stable-like}
The process $X_t$ is said to have \emph{stable-like scaling} property with dimension $n > 0$, index $\alpha > 0$ and localization radius $R_0 \in (0, \infty]$ ($\alpha$-stable-like scaling in short), if the following conditions are met:
\begin{enumerate}[label={\rm (\alph{*})},ref={\rm (\alph{*})}]
\item \label{it:reg}
$\X$ is locally an Ahlfors regular $n$-space; that is, $c^{-1} r^n \le m(B(x, r)) \le c r^n$ when $0 < r < R_0$ and $x \in \X$;
\item \label{it:nu}
$c_{\eqref{eq:nu}}(x_0, r, R) \le c(r/R)$ when $0 < r < R < R_0$, $x_0 \in \X$ in the relative constancy of the L\'evy measure condition in Assumption~\ref{assu:levy};
\item \label{it:nu2}
$c_{\eqref{eq:nu2}}(x_0, r, R) \ge c(r/R) R^{-n - \alpha}$ when $0 < r < R < R_0$, $x_0 \in \X$, that is, $\nu(x, y) \ge c (d(x, y))^{-n - \alpha}$ when $d(x, y) < R_0$;
\item \label{it:tau}
$c_{\eqref{eq:tau}}(x_0, r) \le c r^\alpha$ when $0 < r < R_0$, $x_0 \in \X$ in the upper bound for mean exit time from a ball;
\item \label{it:green}
$c_{\eqref{eq:green}}(x_0, r, p, R) \le c(r/R, p/R) R^{\alpha - n}$ when $0 < r < p < R < R_0$ and $x_0 \in \X$ in the off-diagonal upper bound for the Green function of a ball;
\item \label{it:gen}
$\ro(\overline{B}(x_0, r), B(x_0, R)) \le c(r/R) R^{-\alpha}$ when $0 < r < R < R_0$ and $x_0 \in \X$, and $\ro(\overline{A}(x_0, p, R), A(x_0, r, \tilde{r})) \le c(r/R, p/R, R/\tilde{r}) R^{-\alpha}$ when $0 < r < p < R < \tilde{r}$ in Assumption~\ref{assu:gen}.
\end{enumerate}
\end{definition}

\begin{proposition}
\label{prop:stable-like}
If the scaling property~\itref{it:reg} is satisfied, then conditions~\itref{it:nu}, \itref{it:nu2} and~\itref{it:tau} are consequences of:
\begin{enumerate}[label={\rm (\alph{*})},ref={\rm (\alph{*})}]
\setcounter{enumi}{6}
\item \label{it:nu3}
the L\'evy kernel of $X_t$ satisfies
\[
 c^{-1} (d(x, y))^{-n - \alpha} \exp(-q d(x, y)) \le \nu(x, y) \le c (d(x, y))^{-n - \alpha} \exp(-q d(x, y))
\]
for some $q \ge 0$ and for all $x, y \in \X$.
\end{enumerate}
\end{proposition}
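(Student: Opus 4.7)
The plan is to derive each of the three conclusions (b), (c), (d) by direct manipulation of the pointwise two-sided bound in item (g), combined with Ahlfors regularity (a). No new probabilistic input is needed beyond what was already used in Proposition~\ref{prop:tau}.

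For (b), fix $x \in B(x_0, r)$ and $y \in \X \setminus B(x_0, R)$. The triangle inequality gives $|d(x,y) - d(x_0, y)| < r$, whence $d(x,y)/d(x_0,y) \in (1 - r/R,\, 1 + r/R)$, so the polynomial factor $d(x,y)^{-n-\alpha}/d(x_0,y)^{-n-\alpha}$ is controlled by a constant depending on $r/R$ (and $n+\alpha$) alone. The exponential factor $\exp\bigl(q|d(x,y) - d(x_0,y)|\bigr) \le e^{qR_0}$ is absorbed into the constant. Inserting into (g) and using $\hat\nu(x,y) = \nu(y,x)$ for the dual inequality yields (b).

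For (c), recall that $c_{\eqref{eq:nu2}}(x_0, r, R)$ is the infimum of $\min(\nu(x_0, y), \hat\nu(x_0, y))$ over $y \in \overline{A}(x_0, r, R)$. Any such $y$ satisfies $d(x_0, y) \le R < R_0$, so the lower bound in (g) gives $\nu(x_0, y) \ge c^{-1} R^{-n-\alpha} e^{-qR_0}$ (and similarly for $\hat\nu$), which is of the form $c(r/R)\, R^{-n-\alpha}$.

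For (d), follow the proof of Proposition~\ref{prop:tau}, which exhibits
\[
 c_{\eqref{eq:tau}}(x_0, r) \le \frac{c_{\eqref{eq:nu}}(x_0, r, R)}{\nu(x_0, \X \setminus B(x_0, R))}
\]
for any $R \in (r, R_0)$. I take $R$ a fixed multiple of $r$, say $R = 2r$ when $2r < R_0$. The numerator is bounded by (b). For the denominator, integrate the lower bound in (g) over the annular shell $A(x_0, R, 2R)$ to obtain
\[
 \nu(x_0, \X \setminus B(x_0, R)) \ge c^{-1} (2R)^{-n-\alpha} e^{-2qR}\, m(A(x_0, R, 2R)) \ge c' R^{-\alpha},
\]
where (a) is used in the last step to estimate the annulus measure from below by $c R^n$. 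Combining the bounds gives $c_{\eqref{eq:tau}}(x_0, r) \le c\, r^\alpha$, which is (d).

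The main obstacle lies in the final step: raw Ahlfors regularity only bounds full balls, so ensuring $m(A(x_0, R, 2R)) \gtrsim R^n$ requires the two constants in (a) to be sufficiently separated (equivalently, a mild reverse-doubling property); if not, one passes to a wider shell $A(x_0, R, 2^k R)$, which is harmless since it only adjusts implicit constants. A similar minor adjustment handles $r$ close to $R_0$, where the factor $2$ in the choice $R = 2r$ must be replaced by something smaller.
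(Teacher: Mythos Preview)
Your proof is correct and follows essentially the same route as the paper. The paper is terser for (b) and (c) (``follow directly from~(g)''), and for (d) it estimates $\nu(x,\X\setminus B(x_0,r))$ directly for every $x\in B(x_0,r)$ by locating a disjoint ball $B(y,r)\subset B(x_0,c_1 r)\setminus B(x_0,r)$, whereas you route the same estimate through the displayed inequality in the proof of Proposition~\ref{prop:tau}; in both cases the annulus--mass issue you flag is resolved by enlarging the outer radius (equivalently, shrinking the localization radius $R_0$), exactly as the paper does when it writes ``there is $R_0>0$ such that\ldots''.
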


Note that the same parameter $q$ appears in the lower and the upper bound.

\begin{proof}
Conditions~\itref{it:nu} and~\itref{it:nu2} follow directly from~\itref{it:nu3}. Furthermore, by~\itref{it:reg} and the triangle inequality, there is $R_0 > 0$ such that if $x_0 \in \X$ and $0 < r < R_0$, then for some $y \in B(x_0, c_1 r)\setminus B(x_0, r)$ where $c_1 > 2$, the balls $B(x_0, r)$ and $B(y, r)$ are disjoint. Hence, for all $x \in B(x_0, r)$ we have by \itref{it:reg} and \itref{it:nu3}, $\nu(x, \X \setminus B(x_0, r)) \ge \nu(x, B(y, r)) \ge c_2 r^{-\alpha}$. As in the proof of Proposition~\ref{prop:tau}, it follows that $\mbox{$\pr_x(\tau_{B(x_0, r)} > t)$} \le \exp(-c_2 r^{-\alpha} t)$, and therefore $\ex_x(\tau_{B(x_0, r)}) \le c_2^{-1} r^\alpha$.
\end{proof}

We also have the following sufficient condition for scaling properties~\itref{it:tau} and~\itref{it:green}.

\begin{proposition}
\label{prop:green}
Assume that scaling property~\itref{it:reg} holds. Suppose that the transition density $T_t(x, y)$ of a Hunt process $X_t$ exists, and that for some $\alpha > 0$, $r_0 > 0$,
\begin{align}
\label{eq:ttest}
 \frac{1}{c_{\eqref{eq:ttest}}} \, \min \expr{t^{-n/\alpha}, \frac{t}{(d(x, y))^{n + \alpha}}} \le T_t(x, y) & \le c_{\eqref{eq:ttest}} \min \expr{t^{-n/\alpha}, \frac{t}{(d(x, y))^{n + \alpha}}}
\end{align}
for $x, y \in \X$ with $d(x, y) < r_0$, and any $t \in (0, r_0^\alpha)$. Then Assumption~\ref{assu:green} and scaling conditions~\itref{it:tau} and~\itref{it:green} hold. The constant $c_{\eqref{eq:green}}$ and the localization radius $R_0$ in~\eqref{eq:green} depend only on the constants in~\eqref{eq:ttest} (including $\alpha$ and $r_0$) and in the Ahlfors regularity condition.
\end{proposition}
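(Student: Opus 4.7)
The plan is to read off both the exit-time scaling~\itref{it:tau} and the Green function estimate (Assumption~\ref{assu:green} together with scaling condition~\itref{it:green}) directly from the two-sided transition density bound~\eqref{eq:ttest}. Since this bound is only available for $t < r_0^\alpha$ and $d(x,y) < r_0$, I will choose the new localization radius $R_0$ to be a fixed fraction of $r_0$ (say $R_0 = r_0/3$), so that every pair $(x,y)$ and every time parameter $t$ entering the proof satisfies $d(x,y) < r_0$ and $t < r_0^\alpha$, respectively. I will then estimate the mean exit time first, and feed this into a semigroup-based splitting of the Green function.

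For the exit-time estimate, fix $x \in B(x_0, r)$ with $0 < r < R_0$ and set $t_1 = c_* r^\alpha$ for a small absolute constant $c_*$. As in the proof of Proposition~\ref{prop:stable-like}, Ahlfors regularity together with the triangle inequality furnishes a ball $B(y_0, \rho)$ with $\rho$ comparable to $r$ that is disjoint from $B(x_0, r)$ and satisfies $d(x, z)$ comparable to $r$ for all $z \in B(y_0, \rho)$. The lower bound in~\eqref{eq:ttest} then gives $T_{t_1}(x, z) \ge c \, t_1 / r^{n+\alpha}$, of order $r^{-n}$, on $B(y_0, \rho)$, whose measure is at least $c \, r^n$, so
\begin{align*}
 \pr_x(\tau_{B(x_0, r)} \le t_1) \ge \pr_x(X_{t_1} \in B(y_0, \rho)) \ge c_0 > 0
\end{align*}
uniformly in $x$. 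The Markov property iterates this to $\pr_x(\tau_{B(x_0, r)} > k t_1) \le (1 - c_0)^k$, yielding $\ex_x \tau_{B(x_0, r)} \le c \, r^\alpha$. The duality identity $\hat T_t(y, x) = T_t(x, y)$ gives the same bound for $\hat X_t$, so scaling condition~\itref{it:tau} holds.

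For the Green function, fix $0 < r < p < R < R_0$, $x \in B(x_0, r)$, $y \in B(x_0, R) \setminus B(x_0, p)$ and set $t_0 = R^\alpha$. I split
\begin{align*}
 G_B(x, y) = \int_0^{t_0} T^B_t(x, y) \, dt + \int_{t_0}^\infty T^B_t(x, y) \, dt =: I_1 + I_2 .
\end{align*}
For $I_1$ I use $T^B_t(x, y) \le T_t(x, y) \le c \min(t^{-n/\alpha}, \, t \, d(x,y)^{-n-\alpha})$; substituting $s = d(x,y)^\alpha$ and splitting the $t$-integral at $\min(s, t_0)$ gives, after handling the three subcases $n > \alpha$, $n = \alpha$, $n < \alpha$ separately, an upper bound of the form $c(r/R, p/R) \, R^{\alpha-n}$, using $p - r \le d(x, y) \le 2 R$. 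For $I_2$ I exploit the semigroup identity $T^B_t(x, y) = \int_B T^B_{t - t_0}(x, z) T^B_{t_0}(z, y) m(dz)$ and the uniform bound $T^B_{t_0}(z, y) \le T_{t_0}(z, y) \le c \, t_0^{-n/\alpha} = c \, R^{-n}$, obtaining
\begin{align*}
 I_2 \le c \, R^{-n} \int_{t_0}^\infty T^B_{t - t_0}(x, B) \, dt \le c \, R^{-n} \, \ex_x \tau_B \le c \, R^{\alpha - n} ,
\end{align*}
with the exit-time bound from the previous paragraph. A parallel argument for $\hat G_B$, together with an application of Proposition~\ref{prop:aa} to promote any almost-everywhere estimate to a pointwise one (using co-excessiveness in $y$), yields Assumption~\ref{assu:green} and scaling condition~\itref{it:green}.

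The main obstacle is that~\eqref{eq:ttest} says nothing about $T_t$ at times exceeding $r_0^\alpha$, so $\int_0^\infty T_t(x,y) \, dt$ need not be finite (indeed $X_t$ may well be recurrent when $n \le \alpha$), and one cannot simply integrate the density bound. The semigroup-based splitting of $I_2$ circumvents this by converting large-time control into the already-established exit-time estimate, and this is really the only subtle point; the case analysis in $I_1$ is routine, with the borderline case $n = \alpha$ producing a logarithm that remains bounded in terms of the ratios $r/R$ and $p/R$.
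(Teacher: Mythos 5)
Your argument is correct and is essentially the paper's: both proofs rest on the same ingredients — the off-diagonal upper bound of \eqref{eq:ttest} for the small-time part of $\int_0^\infty T^B_t(x,y)\,dt$ (with the same three-case analysis in $n$ versus $\alpha$), and the lower bound of \eqref{eq:ttest} on a disjoint ball of comparable radius to extract a uniform survival-probability decay that controls the large-time tail. The only difference is organizational: you establish \itref{it:tau} first by iterating the exit probability and then dispose of the tail $I_2$ with a single Chapman--Kolmogorov step plus the on-diagonal bound, whereas the paper iterates the killed semigroup $T^D_s=(T^D_{t_2})^jT^D_t$ to sum a geometric series and then obtains \itref{it:tau} afterwards by integrating the Green function bound — a cosmetic rearrangement, not a different route.
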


\begin{proof}
Both cases $\alpha > n$ and $\alpha < n$ are very similar (in fact, slightly simpler) to the remaining case $\alpha = n$. Hence we give a detailed argument only when $\alpha = n$.

With no loss of generality we may assume that $r_0 < \diam \X$. We choose $k > 2$ so that $m(B(x_0, k r) \setminus B(x_0, r)) \ge r^n$ for all $x_0 \in \X$ and $r < r_0 / k$. Let $r < r_0 / (1 + k)^{1 + 1/\alpha}$, $x_0 \in \X$, $D = B(x_0, r)$, and let $T_t^D$ be the transition kernel of the killed process $X^D_t$. Recall that $G_D(x, y) = \int_0^\infty T^D_t(x, y) dt$. Let $x, y \in D$ and let $t_1 = (d(x, y))^\alpha$, $t_2 = (2 r)^\alpha$. Since $d(x, y) < 2 r < r_0$, we have
\[
 \int_0^{t_1} T_t^D(x, y) dt \le \frac{c_{\eqref{eq:ttest}}}{(d(x, y))^{n + \alpha}} \int_0^{t_1} t dt = \frac{c_{\eqref{eq:ttest}} (d(x, y))^{\alpha - n}}{2} \, ,
\]
and
\[
 \int_{t_1}^{t_2} T_t^D(x, y) dt \le c_{\eqref{eq:ttest}} \int_{t_1}^{t_2} t^{-n/\alpha} dt = \alpha c_{\eqref{eq:ttest}} \log \frac{2 r}{d(x, y)} \, .
\]
Note that for $\alpha > n$ or $\alpha < n$, we simply have a different expression for the above integral. When $t \in [t_2, 2 t_2]$, we have $t < 2 t_2 < 2^{1 + \alpha} r^\alpha < r_0^\alpha$, and hence $T_t^D(x, y) \le c_{\eqref{eq:ttest}} {t_2}^{-n / \alpha} = c_{\eqref{eq:ttest}} \, (2 r)^{-n}$. Furthermore, since  $T_{t_2}\ind\le \ind$ and $d(x, z) < (1 + k) r < r_0$ for $z \in B(x_0, k r)$,
\begin{align*}
 T_{t_2}^D \ind(x) & \le T_{t_2} \ind_D(x) \le 1 - \int_{\X \setminus D} T_{t_2}(x, z) m(dz) \\
 & \le 1 - \frac{1}{c_{\eqref{eq:ttest}}} \int_{B(x_0, k r) \setminus B(x_0, r)} \frac{t_2}{(d(x, z))^{n + \alpha}} \, m(dz) \\
 & \le 1 - \frac{2^\alpha m(B(x_0, k r) \setminus B(x_0, r))}{c_{\eqref{eq:ttest}} \, (k + 1)^{n+\alpha} r^n} \le 1 - \frac{2^\alpha}{c_{\eqref{eq:ttest}} \, (k + 1)^{n+\alpha}} \, .
\end{align*}
For $s = j t_2 + t$, $t \in [t_2, 2 t_2]$, $j \ge 0$, we have $T_s^D = (T_{t_2}^D)^j T_t^D$. It follows that
\[
 T_s^D(x, y) \le \expr{1 - \frac{2^\alpha}{c_{\eqref{eq:ttest}} \, (k + 1)^{n+\alpha}}}^j \frac{c_{\eqref{eq:ttest}}}{(2 r)^n} \, ,
\]
and therefore, by summing up a geometric series,
\[
 \int_{t_2}^\infty T_t^D(x, y) dt \le 2^{-\alpha}(c_{\eqref{eq:ttest}})^2 (k + 1)^{n+\alpha} t_2 (2 r)^{-n} = 2^{-n} (k + 1)^{n+\alpha} (c_{\eqref{eq:ttest}})^2 r^{\alpha - n}.
\]
We conclude that $G_D(x, y) \le (c_{\eqref{eq:ttest}} / 2) + n c_{\eqref{eq:ttest}}\log (2r / d(x, y)) + 2^{-n} (k + 1)^{2n} (c_{\eqref{eq:ttest}})^2$. This gives Assumption~\ref{assu:green} and property~\itref{it:green}. Property~\itref{it:tau} follows by simple integration.
\end{proof}

If $X_t$ has $\alpha$-stable-like scaling, then, by a simple substitution, in Theorem~\ref{th:bhi} we have
\begin{align*}
 & c_{\eqref{eq:regexit3}}(x_0, r, q, R) \le c(r/R, q/R) R^{\alpha - d} , \\
 & c_{\eqref{eq:ubhic2}}(x_0, q, R) \le c(q/R) R^\alpha , \\
 & c_{\eqref{eq:ubhi}}(x_0, r, R) \le c(r/R) .
\end{align*}
Hence the boundary Harnack inequality is {uniform} in all scales $R \in (0,R_0)$, or \emph{scale-invariant}, as claimed in Remark~\ref{rem:si}. We state this result as a separate theorem for future reference.

\begin{theorem}
\label{th:sibhi}
If the assumptions of Theorem~\ref{th:bhi} are satisfied, and the process $X_t$ has $\alpha$-stable-like scaling, then the boundary Harnack inequality~\itref{it:bhi} is scale-invariant: $c_{\eqref{eq:ubhi}}(x_0, r, R)$ depends only on $r / R$.
\end{theorem}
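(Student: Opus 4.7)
The plan is to substitute the six scaling estimates from Definition~\ref{def:stable-like} into the nested formulas~\eqref{eq:regexit3}, \eqref{eq:ubhic2} and~\eqref{eq:ubhic1}, and then pick the free parameters in each infimum as fixed multiples of $R$ so that all powers of $R$ cancel, leaving a constant depending only on $r/R$.

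First, I would tackle $c_{\eqref{eq:regexit3}}(x_0, r, q, R)$. The scaling assumptions give $c_{\eqref{eq:green}}(x_0, r, p, R) \le c(r/R, p/R) R^{\alpha - n}$, $c_{\eqref{eq:tau}}(x_0, R) \le c R^\alpha$, $c_{\eqref{eq:nu}}(x_0, p, q) \le c(p/q)$, and $m(B(x_0, p)) \ge c^{-1} p^n$. Substituting and taking $p = (r+q)/2$ yields $c_{\eqref{eq:regexit3}}(x_0, r, q, R) \le c(r/R, q/R) R^{\alpha - n}$; here the ratios $p/R$ and $p/q$ depend only on $r/R$ and $q/R$, which is the reason the bound respects the scaling.

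Next I would bound $c_{\eqref{eq:ubhic2}}(x_0, q, R)$ by choosing $\tilde{q} = (q+R)/2$ and $\tilde{R} = 2R$ in~\eqref{eq:ubhic2}. The previous step gives $c_{\eqref{eq:regexit3}}(x_0, q, \tilde{q}, R) \le c(q/R) R^{\alpha - n}$, while $\varrho(\overline{B}(x_0,\tilde{q}), B(x_0,R)) \le c(q/R) R^{-\alpha}$, $c_{\eqref{eq:nu2}}(x_0, \tilde{q}, \tilde{R}) \ge c(q/R) \tilde{R}^{-n-\alpha} = c'(q/R) R^{-n-\alpha}$, $c_{\eqref{eq:nu}}(x_0, R, \tilde{R}) \le c$, and $m(B(x_0, R)) \le c R^n$. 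The first term inside the maximum becomes of order $R^n$, and so does the second; multiplying by $c_{\eqref{eq:regexit3}} \le c(q/R) R^{\alpha-n}$ yields $c_{\eqref{eq:ubhic2}}(x_0, q, R) \le c(q/R) R^\alpha$.

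Finally, for $c_{\eqref{eq:ubhi}}(x_0, r, R)$ I would choose $p = r + (R-r)/3$, $q = r + 2(R-r)/3$, $\tilde{r} = 2R$ in~\eqref{eq:ubhic1}. Then all of $p/R$, $q/R$, $R/\tilde{r}$, $p/q$, $q/R$ depend only on $r/R$. The Urysohn-type bound gives $\varrho(\overline{A}(x_0,p,q), A(x_0,r,\tilde{r})) \le c(r/R) q^{-\alpha}$ (applying condition~\itref{it:gen} to the annulus $\overline{A}(x_0,p,q)$ on the scale $q$), which combined with $c_{\eqref{eq:ubhic2}} \le c(q/R) R^\alpha$ yields a factor $c(r/R)(R/q)^\alpha \le c(r/R)$. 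The term $c_{\eqref{eq:nu}}(x_0,p,q) \le c(r/R)$ presents no difficulty, and raising the sum to the fourth power preserves the form of the dependence.

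The one place that requires some care, and which I expect to be the principal obstacle, is the application of condition~\itref{it:gen} to the annular region $\overline{A}(x_0,p,q)$ inside $A(x_0,r,\tilde{r})$: one must check that the bump-function bound in~\itref{it:gen} applies at the scale $q$ rather than $R$, and that the resulting constant genuinely depends only on $r/R$ (equivalently, only on the ratios $r/q$, $p/q$, $q/\tilde{r}$, each of which is a fixed function of $r/R$ by the above choice). Once this is verified, collecting the three bounds concludes the proof of~\itref{it:bhi} with $c_{\eqref{eq:ubhi}}(x_0,r,R) \le c(r/R)$, as asserted.
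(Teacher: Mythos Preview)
Your proposal is correct and follows exactly the approach the paper takes: the paper simply states that ``by a simple substitution'' one obtains $c_{\eqref{eq:regexit3}}(x_0, r, q, R) \le c(r/R, q/R) R^{\alpha - n}$, then $c_{\eqref{eq:ubhic2}}(x_0, q, R) \le c(q/R) R^\alpha$, and finally $c_{\eqref{eq:ubhi}}(x_0, r, R) \le c(r/R)$; you have merely spelled out that substitution in detail. The ``principal obstacle'' you flag at the end is not one: condition~\itref{it:gen} for annuli is stated precisely so that, with $q$ playing the role of the outer radius of the inner annulus, the bound reads $\ro(\overline{A}(x_0,p,q), A(x_0,r,\tilde r)) \le c(r/q, p/q, q/\tilde r)\, q^{-\alpha}$, and your choices of $p,q,\tilde r$ make each of those ratios a function of $r/R$ alone.
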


In typical applications, one verifies (typically quite straightforward) conditions~\itref{it:reg} and~\itref{it:nu3}, formula~\eqref{eq:ttest} (which has been proved for a fairly general class of processes), and condition~\itref{it:gen}. When dealing with processes given the L\'evy kernel $\nu(x, y)$, condition~\itref{it:gen} turns out to be the most restrictive one.

\begin{example}[L\'evy processes]
\label{ex:levy}
Theorem~\ref{th:bhi} applies to a large class of L\'evy processes. In this case, the notion of processes in duality and properties of the Feller generator simplify significantly, see~\cite{MR1739520}.

Let $X_t$ be a L\'evy process in $\X = \R^k$ (with the Euclidean distance $d$ and Lebesgue measure $m$). Then $X_t$ is always Feller, and it is strong Feller if and only if the distribution of $X_t$ is absolutely continuous (with respect to the Lebesgue measure). If this is the case, Assumption~\ref{assu:dual} is satisfied: the dual of $X_t$ exists, and it is the reflected process, $\hat{X}_t - \hat{X}_0 = -(X_t - X_0)$. Assumption~\ref{assu:gen} is always satisfied with $\dom = C_c^\infty(\R^k)$. The L\'evy kernel of $X_t$ is translation-invariant, $\nu(x, E) = \nu(E - x)$, where $\nu(dz)$ is the L\'evy measure of $X_t$. Therefore, Assumption~\ref{assu:levy} can be restated as follows: the L\'evy measure of $X_t$ is absolutely continuous, and its density function $\nu(z)$ satisfies
\begin{align}
\label{eq:nu:levy}
 0<c_{\eqref{eq:nu}}^{-1} \, \nu(z_0) \le \nu(z) & \le c_{\eqref{eq:nu}} \, \nu(z_0) , && |z_0| > R , \, |z - z_0| < r ,
\end{align}
whenever $0 < r < R$, with constant $c_{\eqref{eq:nu}}(0, r, R)$. 
If, e.g., $\nu(z)$ is isotropic and radially non-increasing, then~\eqref{eq:nu:levy} is equivalent to $\nu(z_2) \ge c \nu(z_1)>0$ being valid whenever {$z_1,z_2\in \R^k$,} $|z_1| \ge 1$ and $|z_2|=|z_1|+1$.
Indeed, let us assume the latter condition. By radial monotonicity, $\nu$ is locally bounded on $\R^k \setminus \{0\}$ from above and below by positive constants. Therefore, $c_1 = c_1(c,\nu,r, R) > 0$ exists such that 
$\nu(z_2) \ge c_1 \nu(z_1)$ if $|z_1| \ge R-r$ and $|z_2|=|z_1|+1$. If follows that 
$(c_1)^n \nu(z_1) \le \nu(z_2) \le \nu(z_1)$ if $R-r \le |z_1| \le |z_2| \le |z_1| + n$ and $n = 1, 2, \ldots$ Taking $n\ge r$ we obtain \eqref{eq:nu:levy}, as desired.
Finally, Assumption~\ref{assu:green} in many cases follows from estimates of the potential kernel $\pot(x, y) = \pot(y - x)$, or, in the recurrent case, the $\alpha$-potential kernel $\pot_\alpha(x, y) = \pot_\alpha(y - x)$, see 
Proposition~\ref{prop:green:pot}.

We conclude that boundary Harnack inequality holds for a L\'evy process $X_t$, provided that its L\'evy measure satisfies~\eqref{eq:nu:levy}, one-dimensional distributions of $X_t$ are absolutely continuous, and the Green functions of balls satisfy Assumption~\ref{assu:green}. This class includes:
\begin{itemize}
\item subordinate Brownian motions which are not compound Poisson processes and have non-zero L\'evy measure density function satisfying $\nu(z_2) \ge c \nu(z_1)$ if $|z_1| \ge 1$ and $|z_2|=|z_1|+1$ (for properties of these processes, see, e.g., \cite{MR2569321, Kim12potentialtheory});
\item (possibly asymmetric) L\'evy processes with non-degenerate Brownian part and L\'evy measure satisfying~\eqref{eq:nu:levy};
\item (possibly asymmetric) strictly stable L\'evy processes, whose L\'evy measure is of the form $|z|^{-d - \alpha} f(z / |z|) dz$ for a function $f$ bounded from below and above by positive constants.
\end{itemize}
Scale-invariance depends on more accurate estimates. We give here some examples and directions.
\begin{itemize}
\item For the class of strictly stable L\'evy processes just mentioned above, scale-invariance follows from the estimates of the transition density given in~\cite[Theorem~1.1]{MR2286060} and Proposition~\ref{prop:green}; see also~\cite{MR2320691} and the references therein for related estimates in the symmetric (but anisotropic) case.
\item Some L\'evy processes for which Theorem~\ref{th:bhi} gives scale-invariant BHI are included in Example~\ref{ex:stable-like} (stable-like L\'evy processes) and Example~\ref{ex:perturbations} (mixtures of isotropic stable processes, relativistic stable processes, etc.).
\item A non-scale-invariant case (mixture of an isotropic stable process and the Brownian motion) is discussed in Example~\ref{ex:brownian}.
\item Our results may be used to recover the recent scale-invariant BHI given in \cite[Theorem~1.1]{MR2994122}.  More specifically, using our results and the first part of~\cite{MR2994122}, one can obtain scale-invariant BHI for the  L\'evy processes considered therein (isotropic L\'evy processes with 
the L\'evy measure comparable to that of a rather general subordinate Brownian motion with some scaling properties), thus replacing the second part of \cite{MR2994122} and significantly simplifying the whole argument of \cite{MR2994122}.
\item Similarly, the estimates given in~\cite{2012arXiv1202.0706K}, combined with Theorem~\ref{th:bhi},  should give a compact proof of scale-invariant BHI for a class of subordinate Brownian motions with L\'evy-Khintchine exponent slowly varying at~$\infty$. 
\end{itemize}
\end{example}

\begin{example}[Stable-like processes]
\label{ex:stable-like}
Let $\X$ be a closed set in $\R^k$, and let $m$ be a measure on $\X$ such that $\X$, with the Euclidean distance, is an Ahlfors regular $n$-space for some $n > 0$. For example, $\X$ can be entire $\R^k$ or the closure of an open set in $\R^k$ (with the Lebesgue measure $m$; then $n = k$). On the other hand, $\X$ can be a fractal set, such as Sierpi\'nski gaskets ($n = \log (k + 1) / \log 2$) or Sierpi\'nski carpets ($n = \log (3^k - 1) / \log 3$) in $\R^2$, equipped with an appropriate Hausdorff measure. By this assumption, scaling property~\itref{it:reg} is satisfied.

Let $\alpha \in (0, 2)$, and suppose that $\nu(x, y) = \nu(y, x)$ and
\begin{align}
\label{eq:stablelevy}
 c_1 |x - y|^{-n - \alpha} \le \nu(x, y) & \le c_2 |x - y|^{-n - \alpha} , && x, y \in \X .
\end{align}
This immediately gives Assumption~\ref{assu:levy} with scaling property~\itref{it:nu3}.

By~\cite[Theorem~1]{MR2008600}, there is a Feller, strong Feller, symmetric pure-jump Hunt process $X_t$ with L{\'e}vy kernel $\nu$, and the continuous transition probability $T_t(x, y)$ of $X_t$ satisfies~\eqref{eq:ttest} for some $r_0$. Assumption~\ref{assu:green} and scaling property~\itref{it:green} follow by Proposition~\ref{prop:green}. Since $X_t$ is symmetric (self-dual) and has continuous transition densities, Assumption~\ref{assu:dual} is also satisfied.

Finally, we assume that Assumption~\ref{assu:gen} holds with scaling property~\itref{it:gen} (see below). Under the above assumptions, scale-invariant boundary Harnack inequality holds with some localization radius. When $\X$ is unbounded, $\alpha \ne n$ and scaling property~\itref{it:reg} holds for all $r > 0$, then~\eqref{eq:ttest} holds for all $t > 0$ and all $x, y \in \X$, see~\cite{MR2008600}, and therefore we can take $R_0 = \infty$.

We list some cases when Assumption~\ref{assu:gen} with scaling property~\itref{it:gen} is known to hold true.
\begin{itemize}
\item When $\X = \R^k$ and $\nu(x, y)$ is a function of $x - y$, then $X_t$ is a symmetric L\'evy process and we can simply take $\dom = C_c^\infty(\R^k)$.
\item More generally, let $\X = \R^k$, and assume that $\nu(x, y) = \kappa(x, y) |y - x|^{-k - \alpha}$ for a $C_b^\infty(\R^k \times \R^k)$ function $\kappa$. We claim that Assumption~\ref{assu:gen} with scaling property~\itref{it:gen} holds for $\dom = C_c^\infty(\R^k)$. Indeed, for $f \in C_c^\infty(\R^k)$ let
\begin{align}
\label{eq:stablegen}
\begin{aligned}
 \tilde{\A} f(x) & = \int_{\R^k} \expr{f(x + z) - f(x) - \frac{z}{1 + |z|^2} \cdot \nabla f(x)} \frac{\kappa(x, x + z)}{|z|^{k + \alpha}} \, dz \\
 & \qquad + \expr{\int_{\R^k} \frac{z}{1 + |z|^2} \, \frac{\kappa(x, x + z) - \kappa(x, x)}{|z|^{k + \alpha}} \, dz} \cdot \nabla f(x) .
\end{aligned}
\end{align}
Then $\tilde{\A}$ is a symmetric pseudo-differential operator with appropriately smooth symbol, and by~\cite[Theorem~5.7]{MR1659620}, the closure of $\tilde{\A}$ is the Feller generator of a symmetric Hunt process $\tilde{X}_t$ (we omit the details). Since the pure-jump Feller processes $X_t$ and $\tilde{X}_t$ have equal L\'evy kernels, they are in fact equal processes, and hence the closure of $\tilde{\A}$ is the Feller generator of $X_t$. Assumption~\ref{assu:gen} with $\dom = C_c^\infty(\R^k)$ follows, and scaling property~\itref{it:gen} is a simple consequence of~\eqref{eq:stablegen}. See also~\cite{Hoh98, MR2718253}.
\item When $\alpha \in (0, 1)$, $\X$ is the closure of an open Lipschitz set, and $\nu(x, y) = c |x - y|^{-k - \alpha}$, then the desired condition is satisfied by $\dom = C_c^\infty(\R^k)$ (see~\cite[Theorem~6.1(i)]{MR2214908}).
\item When $\alpha \in [1, 2)$, $\X$ is the closure of an open set with $C^{1,\beta}$-smooth boundary for some $\beta > \alpha - 1$, and $\nu(x, y) = c |x - y|^{-k - \alpha}$, then one can take $\dom$ to be the class of $C_c^\infty(\R^k)$ functions with normal derivative vanishing everywhere on the boundary of $\X$ (see~\cite[Theorem~6.1(ii)]{MR2214908}).
\item For the case when $X_t$ is a subordinate diffusion on $\X$, see Example~\ref{ex:subordinate}. In this case, when $\X$ is a fractal set, one can even deal with $\alpha$ greater than $2$.
\end{itemize}
Note that an analytical proof of Theorem~\ref{th:regexit} discussed in Remark~\ref{rem:dirichlet} may lead to a generalization of this example, which would not require Assumption~\ref{assu:gen}.
\end{example}

\begin{example}[Stable-like subordinate diffusions in metric measure spaces]
\label{ex:subordinate}
Suppose that $(\X, d, m)$ is an Ahlfors regular $n$-space for some $n > 0$. Assume that the metric $d$ is uniformly equivalent to the shortest-path metric in $\X$. Suppose that there is a diffusion process $Z_t$ with a symmetric, continuous transition density $T^Z_t(x, y)$ satisfying the sub-Gaussian bounds
\begin{equation}
\label{eq:subgaussian}
\begin{aligned}
 \frac{c_1}{t^{n / d_w}} \, \exp \expr{-c_2 \expr{\frac{d(x, y)^{d_w}}{t}}^{1/ (d_w - 1)}} & \le T^Z_t(x, y) \\ & \hspace*{-7em} \le \frac{c_3}{t^{n / d_w}} \, 
 \exp \expr{-c_4 \expr{\frac{d(x, y)^{d_w}}{t}}^{1 / (d_w - 1)}}
\end{aligned}
\end{equation}
for all $x, y \in \X$ and $t \in (0, t_0)$ ($t_0 = \infty$ when $\X$ is unbounded). Here $d_w \ge 2$ is the \emph{walk dimension} of the space $\X$. The existence of such a diffusion process $Z_t$ is well-known when $\X$ is a Riemannian manifold ($d_w = 2$; see~\cite{MR2569498}), the $k$-dimensional Sierpi\'nski gasket ($d_w = \log (k+3) / \log 2 > 2$; see~\cite{MR966175}), more general nested fractals~\cite{MR1301625, MR1227032}, or the Sierpi\'nski carpets~\cite{MR1151799,MR1701339}; see~\cite{MR1840042} for more information.

Let $\alpha \in (0, d_w)$ and let $X_t$ be the stable-like process obtained by subordination of $Z_t$ with the $\alpha / d_w$-stable subordinator $\eta_t$, $X_t = Z(\eta_t)$. These processes were first studied in~\cite{MR2013738, MR2091704, MR1779007}. By the subordination formula, the transition density estimate~\eqref{eq:ttest} holds for some $r_0$ (if $\X$ is unbounded, then it was proved in~\cite{MR2013738} that we can take $r_0 = \infty$).

Since $X_t$ is symmetric and has continuous transition densities, Assumption~\ref{assu:dual} is clearly satisfied. The L{\'e}vy kernel of $X_t$ satisfies $c^{-1} d(x, y)^{-n - \alpha} \le \nu(x, y) \le c d(x, y)^{-n - \alpha}$, see~\cite{MR2013738}, and Assumption~\ref{assu:levy} with scaling property~\itref{it:nu3} follows. Assumption~\ref{assu:green} and scaling property~\itref{it:green} follow from the transition density estimate~\eqref{eq:ttest} by Proposition~\ref{prop:green}; see also~\cite[Lemmas~5.3 and~5.6]{MR2013738}. Finally,  Assumption~\ref{assu:gen} with scaling property~\itref{it:gen} follows by the construction of~\cite[Section~2]{MR2465816}. Roughly speaking, the method of~\cite{MR2465816} yields smooth bump functions in the domain of the generator of the diffusion $Z_t$ with appropriate scaling. By the subordination formula, these bump functions are in the domain of $\A$, and the constants scale appropriately. Since there are some nontrivial issues related to the construction, we repeat the construction with all details in Appendix~\ref{sec:app}. By Corollary~\ref{cor:smoothbump} there, Assumption~\ref{assu:gen} is satisfied with scaling property~\ref{it:gen}.

We conclude that scale-invariant boundary Harnack inequality for $X_t$ holds in the full range of $\alpha \in (0, d_w)$. Noteworthy, we obtain a regularity result also for $\alpha \ge 2$, when Lipshitz functions no longer belong to the domain of the Dirichlet form of $X_t$.

This example can be extended in various directions. Instead of taking $\eta_t$ the $\alpha / d_w$-stable subordinator, one can consider a subordinator $\eta_t$ whose Laplace exponent $\psi$ is a complete Bernstein function regularly varying of order $\alpha / d_w$ ($\alpha \in (0, d_w)$) at infinity. Such subordinators have no drift, and the L\'evy measure with completely monotone density function, regularly varying of order $-1 - \alpha/d_w$ at $0$. Their potential kernel is regularly varying of order $-1 + \alpha/d_w$ at $0$. We refer the reader to~\cite{MR2569321, Kim12potentialtheory, MR2598208} for more information about subordination, complete Bernstein functions and regular variation. By the subordination formula, following the method applied for the Euclidean case $\X = \R^k$ in~\cite{Kim12potentialtheory, MR2994122}, one can obtain two-sided estimates for the L\'evy kernel $\nu(x, y)$ and the potential kernel $\pot(x, y)$ in terms of $\psi$, at least when $\X$ is unbounded and $\alpha < d$. These estimates are sufficient to prove the scale-invariant boundary Harnack inequality.

Similar methods should be applicable also when $X_t$ is recurrent (that is, $\X$ is bounded, or $\alpha \ge d$). In this case, estimates of $\pot(x, y)$ need to be replaced by estimates of the $\lambda$-potential kernel $\pot_\lambda(x, y)$. Another interesting directions are the case of slowly varying $\psi$, which corresponds to $\alpha = 0$, and, on the other hand, the case of pure-jump processes with $\psi$ regularly varying of order $1$ (that is, $\alpha = d_w$). Finally, one can perturb processes considered above, in a similar way as in the next example.
\end{example}

\begin{example}[Stability under small perturbations]
\label{ex:perturbations}
Let $\X = \R^k$, $d$ be the Euclidean distance, $m$ be the Lebesgue measure, and $\alpha \in (0, 2)$. Suppose that $\tilde{\nu}(x, y)$ is a L\'evy kernel of a Hunt process $\tilde{X}_t$ considered in Example~\ref{ex:stable-like}, and $\tilde{\A}$ is the corresponding Feller generator. For example, $\tilde{\nu}(x, y)$ can be any function of $y - x$ satisfying~\eqref{eq:stablelevy}. In this example we consider a perturbation $\nu(x, y)$ of the kernel $\tilde{\nu}(x, y)$.

Although a more general construction is feasible, we are satisfied with the following setting. Let $\nu(x, y) = \tilde{\nu}(x, y) + n(x, y)$, where $n(x, y)$ is chosen so that $\nu(x, y)$ satisfies the scaling property~\itref{it:nu3}, $n(x, y)$ and $\hat{n}(x, y) = n(y, x)$ are kernels of bounded operators on $C_0(\R^k)$, and
\begin{align*}
 & \int_{\R^k} n(x, y) dy = \int_{\R^k} \hat{n}(x, y) dy , && x \in \R^k ;
\end{align*}
the last assumption guarantees that $m$ is an excessive (in fact, invariant) measure for the process $X_t$ defined below.

The formula $\mathcal{N} f(x) = \int_{\R^k} (f(y) - f(x)) n(x, y) dy$ defines a bounded linear operator on $C_0(\R^k)$, and $\A = \tilde{\A} + \mathcal{N}$ (defined on the domain of $\tilde{\A}$) has the positive maximum property. By a standard perturbation argument, $\A$ is the Feller generator of a Hunt process $X_t$, and $\nu(x, y)$ is the L\'evy kernel of $X_t$. The process $\hat{X}_t$ and its Feller generator $\hat{\A}$ are constructed in a similar manner, using the Feller generator of the dual of $\tilde{X}_t$ and the kernel $\hat{n}(x, y)$. It is easy to see that $\int_{\R^k} \A f(x) g(x) dx = \int_{\R^k} f(x) \hat{\A} g(x) dx$ for $f, g \in C_c^\infty(\R^k)$, from which it follows that $\hat{X}_t$ is indeed the dual of $X_t$.

The transition density of $\tilde{X}_t$ satisfies~\eqref{eq:ttest} (see Example~\ref{ex:stable-like}). The process $X_t$ can be constructed probabilistically using $\tilde{X}_t$ and Meyer's method of adding and removing jumps. Hence, by~\cite[Lemma~3.6]{MR2465826} and~\cite[Lemma~3.1(c)]{MR2492992}, the transition density of $X_t$ exists and also satisfies~\eqref{eq:ttest} for smaller $r_0$ (see also~\cite[Proposition~2.1]{MR2443765}).

It follows that Assumption~\ref{assu:dual} is satisfied. Assumption~\ref{assu:gen} holds with $\dom = C_c^\infty(\R^k)$, and scaling property~\itref{it:gen} (with finite $R_0$) follows from the $\alpha$-stable-like scaling of $\tilde{\A}$ and boundedness of $\mathcal{N}$. Since we assumed that~\itref{it:nu3} holds true, Assumption~\ref{assu:levy} is satisfied with scaling properties~\itref{it:nu}, \itref{it:nu2}. Assumption~\ref{assu:green} and scaling properties~\itref{it:tau}, \itref{it:green} follow from transition density estimate~\eqref{eq:ttest} by Proposition~\ref{prop:green}. Hence, scale-invariant boundary Harnack inequality holds true for $X_t$.

The above setting includes mixtures of isotropic stable processes (L{\'e}vy processes generated by $\A = -(-\Delta)^{\alpha/2} - c (-\Delta)^{\beta/2}$ with $0 < \beta < \alpha < 2$ and $c > 0$) and relativistic stable processes (L{\'e}vy processes generated by $\A = m - (-\Delta + m^{2/\alpha})^{\alpha / 2}$ with $m > 0$). Also, the dependence of constants on the parameters $c$, $\beta$, $m$ can be easily tracked. Since the perturbation $n(x, y)$ can be asymmetric, many non-symmetric processes are included. Finally, this example can be adapted to the setting of Ahlfors-regular $n$-sets in $\R^k$, as in Example~\ref{ex:stable-like}.
\end{example}

\begin{example}[Processes killed by a Sch{\"o}dinger potential]
\label{ex:schrodinger}
Suppose that the assumptions for the boundary Harnack inequality in Theorem~\ref{th:bhi} are satisfied. Let $\X'$ be an open set in $\X$. Let $M_t$ be a strong right-continuous multiplicative functional quasi-left continuous on $[0, \infty)$, for which all points of $\X'$ are permanent, and such that $M_t = 0$ for $t \ge \tau_{\X'}$. Finally, let $X^M_t$ be the subprocess corresponding to $M_t$ (in a similar way as in Section~\ref{sec:reg}; see~\cite{MR0264757} for definitions). Then $X^M_t$ is a Hunt process on $\X'$, uniquely determined by the relation $\pr^M_x(X^M_t \in E) = \ex_x(M_t; X_t \in E)$ for any $E \sub \X'$ and $x \in \X'$.

Assume that $M_t$ is a continuous function of $t \in [0, \tau_{\X'})$. We claim that in this case the L{\'e}vy kernel $\nu^M(x, y)$ of $X^M_t$ is again given by $\nu(x, y)$, restricted to $\X' \times \X'$. Indeed, by formula~\eqref{eq:psid} of Lemma~\ref{lem:psigen}, for $x \in \X'$ and $f \in \dom(\A)$ vanishing in a neighborhood of $x$, we have
\begin{align*}
 \ex^M_x(f(X^M_t)) - f(x) & = \ex_x(f(X_t) M_t) - f(x) \\
 & = \ex_x\expr{\int_0^t \A f(X_t) M_t dt} + \ex_x\expr{\int_0^t f(X_t) dM_t} .
\end{align*}
When divided by $t$, this converges (for a fixed $x$) to $\A f(x)$ as $t \to 0^+$. Hence, $\nu^M f(x) = \nu f(x)$. By an approximation argument, this holds for any $f \in C_c(\X')$ vanishing in a neighborhood of $x$, proving our claim. (Note that, however, in general, functions in $\dom(\A)$ need not belong to the domain of the generator of $X^M_t$, even if $\X' = \X$.)

We remark that many such functionals $M_t$ are related to Schr{\"o}dinger potentials $V$: for a nonnegative function $V$, we have $M_t = \exp(-\int_0^t V(X_s) ds)$ for $t < \tau_{\X'}$, see~\cite{MR0264757}. A similar construction was used in Section~\ref{sec:reg} for a particular choice of $V$. In some applications, the potential $V$ can take negative values, the case not covered by this example.

Let $D \sub \X$ be an open set. By the definition of a subharmonic function, a nonnegative function $f$ regular subharmonic on $D \cap \X'$ with respect to the process $X^M_t$, extended by $f(x) = 0$ for $x \in \X \setminus \X'$, is also regular subharmonic in $D$ with respect to $X_t$. Hence, the hypothesis of Theorem~\ref{th:regexit} holds for $X^M_t$ with the same constant $c_{\eqref{eq:regexit3}}$. Of course, one needs to replace the sets in the statement of Theorem~\ref{th:regexit} by their intersections with $\X'$.

We claim that also Lemma~\ref{lem:escape} holds for $X^M_t$ with the same constant. Indeed, with the definitions of the proof of Lemma~\ref{lem:escape} and $D' = D \cap \X'$, for $x \in B(x_0, r) \cap \X'$ we have
\begin{align*}
 \pr^M_x(X^M_{\tau_{D'}} \in \X' \setminus B(x_0, R)) & = \ex_x(M_{\tau_D}; X_{\tau_D} \in \X \setminus B(x_0, R)) \\
 & \le \ex_x(f(X_{\tau_D}) M_{\tau_D}) - f(x) .
\end{align*}
By formula~\eqref{eq:psid} of Lemma~\ref{lem:psigen},
\[
 \ex_x(f(X_{\tau_D}) M_{\tau_D}) - f(x) = \ex_x\expr{\int_0^{\tau_D} \A f(X_t) M_t dt} + \ex_x\expr{\int_{[0, \tau_D+]} f(X_t) d M_t} .
\]
The second summand on the right hand side is nonpositive. It follows that,
\begin{align*}
 \pr_x^M(X^M_{\tau_D} \in \X' \setminus B(x_0, R)) & \le \ex_x\expr{\int_0^{\tau_D} M_t dt} \sup_{y \in \X} \A f(y) \\
 & = \ex_x^M(\tau_{D'}) \sup_{y \in \X} \A f(y) ,
\end{align*}
as desired.

In Lemma~\ref{lem:factorization}, only the estimates of the L{\'e}vy measure and mean exit time are used. Therefore, also Lemma~\ref{lem:factorization} holds for the process $X^M_t$ with unaltered constants. In a similar way, the proof of Theorem~\ref{th:bhi} works for the process $X^M_t$ without modifications. We conclude that the boundary Harnack inequality holds for $X^M_t$ with the same constants. For convenience, we state this result as a separate theorem.

\begin{theorem}
\label{th:sub}
Suppose that Assumptions~\ref{assu:dual}, \ref{assu:gen}, \ref{assu:levy} and~\ref{assu:green} hold true. Let $\X'$ be an open subset of $\X$, and let $X^M_t$ be a subprocess of $X_t$, with state space $\X'$, corresponding to a strong right-continuous multiplicative functional for $X_t$, continuous before $X_t$ hits $\X \setminus \X'$, vanishing after that time, and quasi-left continuous on $[0, \infty)$. Then the boundary Harnack inequality holds true for the process $X^M_t$ with the same constant $c_{\eqref{eq:ubhi}}$ given by~\eqref{eq:ubhic1}. More precisely, if $x_0 \in \X$, $0 < r < R < R_0$, $D \sub B(x_0, R)$ is open, $f, g$ are nonnegative regular harmonic functions in $D \cap \X'$ (with respect to the process $X^M_t$), and $f, g$ vanish in $(B(x_0, R) \setminus D) \cap \X'$, then we have
\begin{align*}
 {f(x)}g(y) & \le c_{\eqref{eq:ubhi}} \, {g(x)}{f(y)} \, , && x, y \in B(x_0, r) \cap D \cap \X',
\end{align*}
where $c_{\eqref{eq:ubhi}} = c_{\eqref{eq:ubhi}}(x_0, r, R)$ does not depend on $M_t$.
\end{theorem}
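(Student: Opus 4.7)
The plan is to show that every ingredient used in the derivation of~\eqref{eq:ubhic1} for $X_t$ carries over to the subprocess $X^M_t$ with identical constants, so that applying the existing machinery (Lemma~\ref{lem:factorization}, Corollary~\ref{corr:ubhi}, Theorem~\ref{th:regexit}) yields BHI for $X^M_t$ with the very same $c_{\eqref{eq:ubhi}}$. Three facts drive everything: (i) $M_t$ is continuous on $[0,\tau_{\X'})$ and vanishes afterwards, so the L\'evy kernel of $X^M_t$ coincides with $\nu(x,y)$ restricted to $\X'\times\X'$; (ii) $0\le M_t\le 1$, hence expectations under $\pr^M_x$ are dominated by those under $\pr_x$; (iii) regular subharmonicity under $X^M_t$ (for nonnegative functions vanishing on $\X\setminus\X'$) implies regular subharmonicity under $X_t$.

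First, I would identify the L\'evy kernel of $X^M_t$. For $x\in\X'$ and $f\in\dom(\A)$ vanishing in a neighborhood of $x$, Lemma~\ref{lem:psigen} applied at the deterministic time $\tau=t$ gives
\begin{align*}
 \ex^M_x f(X^M_t)-f(x) \;=\; \ex_x\!\int_0^t \A f(X_s) M_s\,ds + \ex_x\!\int_{[0,t)} f(X_s)\,dM_s.
\end{align*}
Since $M$ is continuous before $\tau_{\X'}$ and $f$ vanishes near $x$, dividing by $t$ and letting $t\searrow 0$ yields $\nu^M f(x)=\A f(x)=\nu f(x)$. Thus Assumption~\ref{assu:levy} (with the same $c_{\eqref{eq:nu}}$ and $c_{\eqref{eq:nu2}}$) holds for $X^M_t$, restricted to $\X'$. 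The domination $M_t\le 1$ then gives $\ex^M_x\tau^M_{D'}\le\ex_x\tau_D$, so Proposition~\ref{prop:tau} holds for $X^M_t$ with an unchanged (in fact smaller) $c_{\eqref{eq:tau}}$; similarly $G^M_B(x,y)\le G_B(x,y)$ gives Assumption~\ref{assu:green} for $X^M_t$ with the same $c_{\eqref{eq:green}}$.

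Second, I would verify Lemma~\ref{lem:escape} for $X^M_t$ with the same $c_{\eqref{eq:escape}}$. Applying~\eqref{eq:psid} to the bump function $f$ (from Assumption~\ref{assu:gen}) at $\tau_D$, and using that $f\ge 0$ together with $M_t$ nonincreasing makes the $dM_t$-integral nonpositive, one obtains
\begin{align*}
 \pr^M_x(X^M_{\tau_{D'}}\in\overline{A}(x_0,R,\tilde R)) \;\le\; \ex_x\!\int_0^{\tau_D}\!\A f(X_s)M_s\,ds \;\le\; \ex^M_x(\tau_{D'})\,\sup_{y\in\X}\A f(y),
\end{align*}
and taking the infimum over admissible $f$ gives the bound with $c_{\eqref{eq:escape}}$ unchanged. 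Third, I note that any nonnegative function $f$ which is regular subharmonic in $D\cap\X'$ for $X^M_t$ and vanishes on $\X\setminus\X'$ satisfies $f(x)\le \ex^M_x f(X^M_\tau)=\ex_x(f(X_\tau)M_\tau)\le\ex_x f(X_\tau)$, hence is regular subharmonic in $D\cap\X'$ for $X_t$ in the extended sense. Theorem~\ref{th:regexit} therefore applies verbatim to such $f$ and yields the local supremum estimate for $X^M_t$ with the same $c_{\eqref{eq:regexit3}}$.

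With these pieces in place, Lemma~\ref{lem:factorization} and Corollary~\ref{corr:ubhi} go through unchanged for $X^M_t$: the decomposition~\eqref{eq:decomposition} and the Ikeda--Watanabe bound~\eqref{eq:decomposition:J} use only the identified L\'evy kernel and Green operator of $X^M_t$, both controlled by the same constants $c_{\eqref{eq:nu}}$ and $c_{\eqref{eq:escape}}c_{\eqref{eq:upper}}$. Chaining through the proof of Theorem~\ref{th:bhi} then delivers~\itref{it:bhi} for $X^M_t$ with exactly $c_{\eqref{eq:ubhi}}$ given by~\eqref{eq:ubhic1}. The main obstacle is not an estimate but a bookkeeping point: one must keep the implicit ``vanishing outside $\X'$'' convention straight, so that the Ikeda--Watanabe formula is applied to functions supported in $\X\setminus\overline{U\cap\X'}$ intersected with $\X'$, and the extension of regular subharmonic functions across $\partial\X'$ by zero does not destroy the subharmonicity used in Theorem~\ref{th:regexit}. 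Once this is handled, no constant changes.
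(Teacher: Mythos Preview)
Your proposal is correct and follows essentially the same route as the paper: identify the L\'evy kernel of $X^M_t$ via~\eqref{eq:psid}, transfer subharmonicity from $X^M_t$ to $X_t$ by extending by zero on $\X\setminus\X'$ so that Theorem~\ref{th:regexit} applies with the same $c_{\eqref{eq:regexit3}}$, verify Lemma~\ref{lem:escape} for $X^M_t$ via~\eqref{eq:psid} with unchanged $c_{\eqref{eq:escape}}$, and then run Lemma~\ref{lem:factorization} and the proof of Theorem~\ref{th:bhi} verbatim. Your additional remarks that $c_{\eqref{eq:tau}}$ and $c_{\eqref{eq:green}}$ hold for $X^M_t$ by domination are true but not actually needed, since Theorem~\ref{th:regexit} is applied to $X_t$ itself rather than re-derived for the subprocess; and your phrase ``regular subharmonic in $D\cap\X'$ for $X_t$'' should read ``in $D$'' (the extension by zero on $\X\setminus\X'$ makes the function subharmonic on the larger set), but this is cosmetic.
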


We remark that the continuity assumption for $M_t$ is essential. If, for example, $M_t$ is equal to $1$ until the first jump larger than $1$, and then $0$, the boundary Harnack inequality typically does not hold, by an argument similar to one in Example~\ref{ex:truncated} below.
\end{example}

\begin{example}[Actively reflected and censored stable processes]
\label{ex:censored}
Let $\X' \sub \R^k$ be open and let $\X$ be the closure of $\X'$ in $\R^k$. Suppose that $\X$ satisfies property~\itref{it:reg}. Let $\nu(x, y) = c |x - y|^{-n - \alpha}$. As in Example~\ref{ex:stable-like}, under suitable assumptions on $\X$, there is a stable-like process $X_t$ with the L{\'e}vy kernel $\nu(x, y)$, and scale-invariant BHI holds for $X_t$. In~\cite{MR2006232}, the process $X_t$ is called actively reflected $\alpha$-stable process in $\X$, and the process $X'_t$, obtained from $X_t$ by killing it upon hitting $\X \setminus \X'$, is named censored $\alpha$-stable process in $\X'$ (see~\cite[Remark~2.1]{MR2006232}). Clearly, the boundary Harnack inequality for $X'_t$ is the special case of the boundary Harnack inequality for $X_t$, corresponding to open sets $D$ contained in $\X'$. (Note that this is in fact a special case of Theorem~\ref{th:sub}, with $M_t = 1$ for $t < \tau_{\X'}$.) Hence, we have scale-invariant BHI for the actively reflected $\alpha$-stable process $X_t$ and the censored $\alpha$-stable process $X'_t$, whenever $\X'$ is a Lipschitz set in the case $\alpha \in (0, 1)$, and $\X'$ is an open set with $C^{1,\beta}$-smooth boundary for some $\beta > \alpha - 1$ in the case $\alpha \in [1, 2)$. The above extends the results of~\cite{MR2006232, 2007arXiv0705.1614G}.
\end{example}

\begin{example}[Gradient-type perturbations of stable processes]
\label{ex:gradient}
Let $\alpha \in (1, 2)$. If $b : \R^k \to \R^k$ is bounded and differentiable, partial derivatives of $b$ are bounded, and $\diverg b = 0$, then the process $X_t$ generated by $-(-\Delta)^{\alpha/2} + b \cdot \nabla$, and the process $\hat{X}_t$ generated by $-(-\Delta)^{\alpha/2} - b \cdot \nabla$ are mutually dual. Such processes are considered in the recent paper~\cite{MR2875353}. The L{\'e}vy kernels of $X_t$ and $\hat{X}_t$ are the same as that of the isotropic $\alpha$-stable L{\'e}vy process generated by $(-\Delta)^{\alpha/2}$, see~\cite{springerlink:10.1007/s11118-011-9237-x}. Furthermore, $\dom = C_c^\infty(\R^k)$ is contained in the domains of $\A$ and $\hat{\A}$. Therefore, a scale-invariant (with finite $R_0$) boundary Harnack inequality holds for the process $X_t$.
\end{example}

We conclude this article with some negative or partially negative examples.

\begin{example}[L{\'e}vy processes with Brownian component]
\label{ex:brownian}
Let $\X = \R^k$, $d$ be the Euclidean distance, $m$ be the Lebesgue measure, and $\alpha \in (0, 2)$. Let $X_t$ be the sum of two independent processes, the Brownian motion and the isotropic $\alpha$-stable L{\'e}vy process. That is, $X_t$ is the L{\'e}vy process with generator $\A = c_1 \Delta - c_2 (-\Delta)^{\alpha/2}$.

Clearly, $X_t$ is symmetric and has transition densities, so Assumption~\ref{assu:dual} is satisfied. Furthermore, $\dom(\A)$ contains $C_c^\infty(\R^k)$, and hence Assumption~\ref{assu:gen} is satisfied with $2$-stable-like scaling: the property~\itref{it:gen} holds with $\alpha$ replaced by $2$. On the other hand, Assumption~\ref{assu:levy} clearly holds with $\alpha$-stable-like scaling~\itref{it:nu3}. Furthermore,  detailed estimates for the transition density of $X_t$ can be established (\cite{MR2677007}), from which Assumption~\ref{assu:green} follows as in Proposition~\ref{prop:green}, with $2$-stable scaling.

It follows that boundary Harnack inequality holds despite the diffusion component. However, the constant $c_{\eqref{eq:ubhi}}(x_0, r, R)$ is not bounded when, for example, $R = 2 r$ and $r \to 0^+$. This is a typical behavior for processes comprising both jump and diffusion part, and for general open sets one cannot expect a scale-invariant result: the boundary Harnack inequality in the form given in~\itref{it:bhi} does not hold for the Brownian motion without some regularity assumptions on the boundary of $D$, cf.~\cite{MR1127476}. On the other hand, the scale-invariant boundary Harnack inequality for $X_t$ in more smooth domains was established in~\cite{0908.1559}.
\end{example}

\begin{example}[Truncated stable processes]
\label{ex:truncated}
This example shows why Assumption~\ref{assu:levy} is essential for the boundary Harnack inequality in the form given in~\itref{it:bhi}. Consider the truncated isotropic $\alpha$-stable L{\'e}vy process $X_t$ in $\X = \R^k$, $\alpha \in (0, 2)$, $n \ge 1$. This is a pure-jump L{\'e}vy process with L{\'e}vy kernel $\nu(x, y) = c |x - y|^{-n - \alpha} \ind_{B(x, 1)}(y)$. Clearly, Assumptions~\ref{assu:dual}, \ref{assu:gen} and~\ref{assu:green}, as well as formula~\eqref{eq:tau}, hold true with $\alpha$-stable-like scaling and $R_0 = 1$, but Assumption~\ref{assu:levy} is violated.

We examine two specific harmonic functions. Let $v$ be a vector in $\R^d$ with $|v| = 2/3$, let $r \in (0, 1/6)$ be a small number, and define $B_1 = B(x_1, r)$ and $B_2 = B(x_2, r)$, where $x_1, x_2 \in \R^k$ are arbitrary points satisfying $x_1 - x_2 = v$. Let $D = B_1 \cup B_2$, $E_1 = B_1 + v$, $E_2 = B_2 - v$, and let $f_j(x) = \pr_x(X(\tau_D) \in E_j)$. Suppose that $x \in B_1$. By~\eqref{eq:iw}, we have
\begin{align*}
 3^{-n-\alpha} c |E_1| \ex_x \tau_{B_1} & \le f_1(x) \le 3^{n + \alpha} c |E_1| \ex_x \tau_D .
\end{align*}
When $x \in B_2$, then, again by~\eqref{eq:iw},
\begin{align*}
 f_1(x) & \le \pr_x(X(\tau_{B_2}) \in B_1) \cdot \sup_{y \in B_1} f_1(y) \\
 & \le c 3^{n + \alpha} |B_1| \ex_x \tau_{B_2} \cdot 3^{n + \alpha} c |E_1| \sup_{y \in B_2} \ex_y \tau_D .
\end{align*}
Similar estimates hold true for $f_2$. It follows that
\begin{align*}
 \frac{f_1(x_2) f_2(x_1)}{f_1(x_1) f_2(x_2)} & \le c_{n,\alpha}^2 (3^{n + \alpha})^6 |B_1| \, |B_2| \expr{\sup_{y \in D} \ex_y \tau_D}^2 \\
 & \le c_{n,\alpha}^2 (3^{n + \alpha})^6 |B(0, 1)|^2 r^{2n} \expr{\sup_{y \in B(0, 1)} \ex_y \tau_{B(0, 1)}}^2 .
\end{align*}
This ratio can be arbitrarily small when $r \to 0$, and therefore~\itref{it:bhi} cannot hold for truncated stable process uniformly with respect to the domain. We remark that by an appropriate modification of the above example, one can even construct a single domain (an infinite union of balls) for which~\itref{it:bhi} is false. Also, modifications of the above example for other truncated processes, or for processes with super-exponential decay of the density of the L{\'e}vy measure can be given.

On the other hand, if the regular harmonic functions $f$ and $g$ (of the truncated $\alpha$-stable process $X_t$) vanish outside a unit ball, then clearly $f$ and $g$ are harmonic in $D$ also with respect to the standard (that is, non-truncated) isotropic $\alpha$-stable process in $\R^k$. Therefore, the boundary Harnack inequality actually holds true for such functions. A different version of boundary Harnack inequality was proved for $X_t$ under some regularity assumptions on the domain of harmonicity in~\cite{MR2282263, MR2391246}.
\end{example}

%
%

\appendix

\section{Smooth bump functions on metric measure spaces with sub-Gaussian heat kernels}
\label{sec:app}

In this part we repeat the construction of smooth bump functions of~\cite{MR2465816}. We adopt the setting of Example~\ref{ex:subordinate}: $Z_t$ is a diffusion process on an Ahlfors-regular $n$-space $\X$, the transition semigroup $T^Z_t$ of $Z_t$ satisfies sub-Gaussian bounds~\eqref{eq:subgaussian}, and $X_t$ is defined to be the process $Z_t$ subordinated by an independent $\alpha/d_w$-stable subordinator, $\alpha \in (0, d_w)$. The generator of $Z_t$ serves as the (Neumann) Laplacian $\Delta$ on $\X$, and $T^Z_t$ is the heat semigroup.

Let $h = T^Z_t g$ for some $t > 0$ and $g \in L^2(\X)$. One of the main results of~\cite{MR2465816}, Theorem~2.2, states that given any compact $K$ and $\eps > 0$, there is a function $f$ such that $f \in \dom(\Delta^l)$ for all $l > 0$, $f(x) = h(x)$ on $K$ and $f(x) = 0$ when $\dist(x, K) \ge \eps$. There are at least three issues when one tries to apply this result in our setting.

First, Theorem~2.2 in~\cite{MR2465816} is given under the assumption that the spectral gap of $\Delta$ is positive. However, this assumption is used only in the proof of Lemma~2.6, which contains a flaw: positivity of the spectral gap $\lambda$ does not imply the inequality $\|P_t f - f\|_{L^2(\X)} \le \lambda t \|f\|_{L^2(\X)}$ (see line~3 on page~1769 and line~12 on page~1773 in~\cite{MR2465816}). This issue has been resolved by the authors of~\cite{MR2465816} in an unpublished note, containing a corrected version of the proof of Lemma~2.6. The new argument does not involve the condition on the spectral gap, which therefore turns out to be superfluous. For future reference, we provide the corrected version of the proof of Lemma~2.6 below.

Second, to get Assumption~\ref{assu:gen}, we need to apply the above theorem with $h(x) = 1$ for $x \in K$, where $h = T^Z_t g$. This condition is satisfied when $g(x) = 1$ for all $x \in \X$. However, such a function $g$ is in $L^2(\X)$ only when $m$ is a finite measure, and the general case is not covered by~\cite{MR2465816}. For that reason, we choose to repeat the construction of~\cite{MR2465816} in $L^\infty(\X)$ (instead of $L^2(\X)$) setting. 

Finally, for a scale-invariant boundary Harnack inequality, we need an upper bound for $\|\Delta f\|_{L^\infty(\X)}$ with explicit dependence on scale, that is, explicit in $\eps$ and the size (e.g. the diameter) of $K$. Such properties of the estimates are irrelevant in~\cite{MR2465816}, but it turns out that they can be obtained by carefully following the proof of Theorem~2.2 in~\cite{MR2465816}.

For the above reasons, we decide to give a complete proof of an $L^\infty(\X)$ version of Theorem~2.2 in~\cite{MR2465816}. However, it should be emphasized that method was completely developed in~\cite{MR2465816}. Although we only need the result for $g(x) = h(x) = 1$ for all $x \in \X$, for future reference we consider the general case. 

\begin{theorem}[{a variant of~\cite[Theorem~2.2]{MR2465816}}]
\label{th:smoothbump}
Suppose that $K \sub \X$ is a compact set, $\eps, s > 0$ and $h = T^Z_s g$ for some $g \in L^\infty(\X)$. Then there is a function $f \in L^\infty(\X)$ such that $f(x) = h(x)$ for $x \in K$, $f(x) = 0$ when $\dist(x, K) \ge \eps$, and $f \in \dom(\Delta^l)$ for any $l > 0$. Furthermore, the $L^\infty(\X)$ norm of $f$ is bounded by the $L^\infty(\X)$ norm of $g$, $f$ is nonnegative if $g$ is nonnegative, and for all $l > 0$ we have
\begin{equation}
\label{eq:smoothbump}
 \norm{\Delta^l f}_{L^\infty(\X)} \le \frac{c_{\eqref{eq:smoothbump}} (\diam K + \eps)^{n/2}}{\eps^{l d_w + n/2}} \, \norm{g}_{L^\infty(\X)} ,
\end{equation}
where $c_{\eqref{eq:smoothbump}} = c_{\eqref{eq:smoothbump}}(l, \eps^{d_w} / s, Z_t)$.
\end{theorem}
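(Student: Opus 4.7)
My approach follows Kajino~\cite{MR2465816}, who constructs such bump functions in an $L^2$ setting; the task here is to give an $L^\infty$ version with explicit dependence on $\eps$ and $\diam K$. The plan is to build $f$ as a rapidly convergent telescoping sum of heat-kernel smoothings at geometrically decreasing spatial scales, exploiting the sub-Gaussian bounds~\eqref{eq:subgaussian} to prevent mass injected in one annular shell from leaking into the next.

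For the setup, I would fix geometric scales $\eta_j = 2^{-j-1}\eps$ (so that $\sum_j \eta_j = \eps$), nested open neighborhoods $K_j = \{x : \dist(x, K) < \sum_{i<j}\eta_i\}$ with $\bigcup_j K_j \subset \{x : \dist(x,K)<\eps\}$, and matching time scales $t_j = c_0\eta_j^{d_w}$ for a small constant $c_0$ to be chosen. A preliminary reduction rewrites $h = T^Z_{s_0}T^Z_{s-s_0}g$ with $s_0 \asymp \eps^{d_w}$, absorbing the ratio $\eps^{d_w}/s$ into the final constant $c_{\eqref{eq:smoothbump}}$.

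Next I would define $f = \sum_{j=0}^\infty T^Z_{t_j}u_j$, where the $u_j \in L^\infty(\X)$ are supported in the annular regions $K_{j+1}\setminus K_{j-1}$ and chosen inductively so that each partial sum $f_N = \sum_{j<N}T^Z_{t_j}u_j$ agrees with $h$ on $K$ up to an error of order $\exp(-c(\eta_{N-1}^{d_w}/t_{N-1})^{1/(d_w-1)})\|g\|_\infty$, is supported inside $K_N$, and is pointwise dominated by $T^Z_s|g|$. Applied with $d(x,y) \ge \eta_j$ and $t = t_j \asymp \eta_j^{d_w}$, the sub-Gaussian estimate~\eqref{eq:subgaussian} gives this error a doubly geometric decay in $N$, yielding $L^\infty$ convergence of the series and automatic preservation of $\|f\|_\infty \le \|g\|_\infty$ and of positivity.

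For the regularity bound~\eqref{eq:smoothbump}, I would combine the spectral estimate $\|\Delta^l T^Z_{t_j}\|_{L^2\to L^2}\le c(l) t_j^{-l}$ with the ultracontractive estimate $\|T^Z_{t_j}\|_{L^2\to L^\infty}\le c t_j^{-n/(2d_w)}$ extracted from~\eqref{eq:subgaussian}, together with $\|u_j\|_{L^2}\le m(K_\infty)^{1/2}\|u_j\|_\infty$ and $m(K_\infty)\le c(\diam K+\eps)^n$ from Ahlfors regularity. Summing the resulting geometric series and using $t_j\asymp\eta_j^{d_w}\le\eps^{d_w}$ produces the factor $(\diam K+\eps)^{n/2}\eps^{-l d_w-n/2}$ in~\eqref{eq:smoothbump}. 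The main obstacle I anticipate lies here: Kajino's original argument is purely $L^2$ and does not track scale-dependent constants, and the factor $(\diam K+\eps)^{n/2}$ arises only at the $L^\infty\hookrightarrow L^2$ step applied to the compactly supported corrections $u_j$. One must verify that no additional scale-dependent factors accumulate through the iteration, and simultaneously arrange the inductive scheme so that $\|f_N\|_\infty\le\|g\|_\infty$ and positivity hold at every finite stage rather than only in the limit.
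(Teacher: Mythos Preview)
Your sketch has the right flavor but contains two concrete errors that make the scheme as written fail.

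\textbf{The time scale $t_j=c_0\eta_j^{d_w}$ does not give the decay you need.} With this choice the exponent in the sub-Gaussian tail is $(\eta_j^{d_w}/t_j)^{1/(d_w-1)}=c_0^{-1/(d_w-1)}$, a constant independent of $j$. A single application of the off-diagonal estimate therefore shrinks the error by a fixed factor $q<1$, and iterating gives at best geometric decay $q^N$, not ``doubly geometric'' decay. But for $f\in\dom(\Delta^l)$ you must sum $\sum_j t_j^{-l-n/(2d_w)}\|u_j\|_{L^2}$, and $t_j^{-l-n/(2d_w)}\asymp 2^{j(ld_w+n/2)}$ grows geometrically at a rate depending on $l$; no single $q$ works for all $l$. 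The paper fixes this by taking $t_j$ \emph{much} smaller than $\eta_j^{d_w}$: with $s_j=(1-4^{-d_w j})s$ one has $t_j\asymp 4^{-d_w j}s$ while the spatial scale is $2^{-j}\eps$, so the ratio $(2^{-j}\eps)^{d_w}/t_j\asymp 2^{jd_w}\eps^{d_w}/s$ blows up and $D(2^{-j}\eps,t_j)$ decays super-exponentially, beating any power of $t_{j+1}^{-1}$.

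\textbf{A pure sum of heat-smoothed corrections cannot simultaneously give exact vanishing outside the $\eps$-neighborhood, exact agreement with $h$ on $K$, and positivity.} Each $T_{t_j}u_j$ is strictly positive everywhere (the heat kernel is positive), so $\sum_j T_{t_j}u_j(x)=0$ on an open set forces sign changes in the $u_j$, which then conflicts with your positivity requirement. The paper avoids this by a different architecture: it sets $u_j=\ind_{K_j}T_{s_j}g+\ind_{A_j}T_{t_j}u_{j-1}$ (a hard cutoff of a good approximant, not an annular correction) and defines $f$ as the \emph{limit} $\lim_j T_{s-s_j}u_j$ with $s-s_j\to 0$. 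The limit representation directly yields $0\le f\le\|g\|_\infty$, $f=h$ on $K$, and $f=0$ for $\dist(\cdot,K)\ge\eps$ (since $u_j$ is supported in $\{\dist(\cdot,K)<(1-2^{-j})\eps\}$ and the smoothing time vanishes). Regularity is then obtained by rewriting the same limit as the telescoping sum $\sum_i T_{s-s_i}(u_i-T_{t_i}u_{i-1})$ and applying the $L^2\to L^\infty$ bound $\|\Delta^l T_t\|_{L^2\to L^\infty}\le c\,t^{-l-n/(2d_w)}$ term by term; here the increments $u_i-T_{t_i}u_{i-1}$ are supported in $K_i\cup L_i$ (not annuli) and have $L^2$ norm controlled by $D(2^{-i}\eps,t_i)\sqrt{m(K_0)}\|g\|_\infty$. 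The factor $\sqrt{m(K_0)}\asymp(\diam K+\eps)^{n/2}$ enters exactly at the $L^\infty\hookrightarrow L^2$ step you identified.
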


\begin{proof}
We divide the argument into five steps. All constants in this proof may depend not only on the parameters given in parentheses, but also on the space $\X$ and the process $Z_t$. Since we never refer to the semigroup of the subordinate process $X_t$, in this proof for simplicity we write $T_t = T^Z_t$. Furthermore, also in this proof only, we extend $\Delta$ to the $L^\infty(\X)$ generator of $T_t$ (recall that originally $\Delta$ was defined as the $C_0(\X)$ generator), and denote by $\Delta_{L^2(\X)}$ the $L^2(\X)$ generator of $T_t$, that is, the generator of the semigroup of operators $T_t$ acting on $L^2(\X)$. Clearly, $\Delta f = \Delta_{L^2(\X)} f$ $m\aevery$ whenever $f \in \dom(\Delta) \cap \dom(\Delta_{L^2(\X)})$.

\textit{Step 1.} We begin with some general estimates. By the spectral theorem and the inequality $\lambda^l e^{-\lambda t} \le (l e / t)^l$, for any $f \in L^2(\X)$ and $l \ge 0$, we have $T_t f \in \dom((\Delta_{L^2(\X)})^l)$, and
\begin{align*}
 \norm{(\Delta_{L^2(\X)})^l T_t f}_{L^2(\X)} & \le (l e / t)^l \norm{f}_{L^2(\X)} .
\end{align*}
Furthermore, by sub-Gaussian estimates~\eqref{eq:subgaussian}, $\|T_t(x, \cdot)\|_{L^2(\X)} = (T_{2t}(x, x))^{1/2} \le c_1 t^{-n / (2 d_w)}$. Hence,
\begin{align*}
 \norm{T_t f}_{L^\infty(\X)} & \le c_1 t^{-n / (2 d_w)} \norm{f}_{L^2(\X)} .
\end{align*}
We find that
\begin{align*}
 & \norm{\frac{T_s T_t f - T_t f}{s} - \Delta_{L^2(\X)} T_t f}_{L^\infty(\X)} \\
 & \qquad \le \frac{c_1}{(t/2)^{n / (2 d_w)}} \norm{\frac{T_s T_{t/2} f - T_{t/2} f}{s} - \Delta_{L^2(\X)} T_{t/2} f}_{L^2(\X)} \to 0
\end{align*}
as $s \to 0^+$. It follows that $T_t f \in \dom(\Delta)$, with $\Delta T_t f = \Delta_{L^2(\X)} T_t f$. By a similar argument, $T_t f \in \dom(\Delta^l)$ for any $l \ge 0$, and
\begin{align*}
 \norm{\Delta^l T_t f}_{L^\infty(\X)} & = \norm{T_{t/2} \Delta^l T_{t/2} f}_{L^\infty(\X)} \le \frac{c_2}{t^{n / (2 d_w)}} \, \norm{\Delta^l T_{t/2} f}_{L^2(\X)} \le \frac{c_3(l)}{t^{l + n / (2 d_w)}} \norm{f}_{L^2(\X)} .
\end{align*}
Sub-Gaussian estimate~\eqref{eq:subgaussian} and Ahlfors regularity of $\X$ also give the following estimate: for any set $E \sub \X$, any $\eps > 0$ and any $f \in L^\infty(\X)$ or $f \in L^1(\X)$ vanishing in the $\eps$-neighborhood of $E$, we have
\begin{align*}
 \norm{T_t f}_{L^\infty(E)} & \le D(\eps, t) \norm{f}_{L^\infty(\X)} , && \text{and} & \norm{T_t f}_{L^1(E)} & \le D(\eps, t) \norm{f}_{L^1(\X)} ,
\end{align*}
where
\begin{align*}
 D(\eps, t) & = \sup_{x \in \X} \int_{\X \setminus B(x, \eps)} T_t(x, y) m(dy) \le c_4 \exp(-c_5 (\eps^{d_w} / t)^{1 / (d_w - 1)}) .
\end{align*}
In particular, given any $s > 0$ and $\eps > 0$ it is possible to choose a strictly increasing sequence $s_j > 0$ convergent to $s$, with $s_0 = 0$, such that if $t_j = s_j - s_{j-1}$ ($j \ge 1$), then
\begin{align*}
 \lim_{j \to \infty} D(2^{-j} \eps, s - s_j) & = 0 && \text{and} & \sum_{i = 1}^\infty \frac{D(2^{-i} \eps, t_i)}{t_{i+1}^l} & \le \frac{c_6(l, \eps^{d_w} / s)}{\eps^{l d_w}} < \infty
\end{align*}
for any $\eps > 0$, $l \ge 0$. For example, one can take $s_j = (1 - 4^{-d_w j}) s$. Note, however, that the above series would diverge if $t_j$ decreased either too slowly or too rapidly.

\textit{Step 2.} Let $g \in L^\infty(\X)$, $\eps, s > 0$, and $h(x) = T_s g(x)$, as in the statement of the theorem. Following~\cite{MR2465816}, for $j \ge 0$ we define
\begin{align*}
 K_j & = \{ x \in \X : \dist(x, K) < 2^{-j} \eps \} , & L_j & = \{ x \in \X : \dist(x, K) > (1 - 2^{-j}) \eps \} ,
\end{align*}
and $A_j = \X \setminus (K_j \cup L_j)$. Furthermore, let $s_j$ and $t_j$ be chosen as in Step~1. For $j \ge 1$ we define
\begin{align*}
 u_0(x) & = 0 , & u_j(x) & = \ind_{K_j}(x) T_{s_j} g(x) + \ind_{A_j}(x) T_{t_j} u_{j-1}(x) .
\end{align*}
Below we prove that $T_{s - s_j} u_j$ converges to a function $f$ with the desired properties.

\textit{Step 3.}
By induction, $\|u_j\|_{L^\infty(\X)} \le \|g\|_{L^\infty(\X)}$ for any $j \ge 0$. For $j \ge 1$ we have $u_{j-1}(x) = T_{s_{j-1}} g(x)$ for $x \in K_{j-1}$, and $\dist(K_j, \X \setminus K_{j-1}) \ge 2^{-j} \eps$. Hence,
\begin{equation}
\label{eq:kbound}
\begin{aligned}
 \norm{u_j - T_{t_j} u_{j-1}}_{L^\infty(K_j)} & = \norm{T_{t_j} (T_{s_{j-1}} g - u_{j-1})}_{L^\infty(K_j)} \\
 & \hspace*{-5em} \le D(2^{-j} \eps, t_j) \norm{T_{s_{j-1}} g - u_{j-1}}_{L^\infty(\X)} \le 2 D(2^{-j} \eps, t_j) \norm{g}_{L^\infty(\X)} ,
\end{aligned}
\end{equation}
where $D(2^{-j} \eps, t_j)$ is as in Step~1 (cf.~\cite[Lemma~2.3]{MR2465816}). Also, $u_j$ vanishes on $L_j$, $u_{j-1}$ vanishes on $L_{j-1}$, and $\dist(L_j, \X \setminus L_{j-1}) \ge 2^{-j} \eps$, so that
\begin{align*}
 \norm{u_j - T_{t_j} u_{j-1}}_{L^\infty(L_j)} & = \norm{T_{t_j} u_{j-1}}_{L^\infty(L_j)} \le D(2^{-j} \eps, t_j) \norm{u_{j-1}}_{L^\infty(\X)} ,
\end{align*}
and (using $\X \setminus L_{j-1} \sub K_0$)
\begin{align*}
 \norm{u_j - T_{t_j} u_{j-1}}_{L^1(L_j)} & = \norm{T_{t_j} u_{j-1}}_{L^1(L_j)} \\
 & \le D(2^{-j} \eps, t_j) \norm{u_{j-1}}_{L^1(\X)} \le D(2^{-j} \eps, t_j) m(K_0) \norm{u_{j-1}}_{L^\infty(\X)} .
\end{align*}
Hence, using also $\|u_{j-1}\|_{L^\infty(\X)} \le \|g\|_{L^\infty(\X)}$, we obtain (cf.~\cite[Lemma~2.5]{MR2465816})
\begin{equation}
\label{eq:lbound}
\begin{aligned}
 \norm{u_j - T_{t_j} u_{j-1}}_{L^2(L_j)} & \le \sqrt{\norm{u_j - T_{t_j} u_{j-1}}_{L^\infty(L_j)} \norm{u_j - T_{t_j} u_{j-1}}_{L^1(L_j)}} \\
 & \le D(2^{-j} \eps, t_j) \sqrt{m(K_0)} \, \norm{g}_{L^\infty(\X)} .
\end{aligned}
\end{equation}

\textit{Step 4.}
We follow the corrected version of the proof of~\cite[Lemma~2.6]{MR2465816}. Let $l \ge 0$. For $j \ge 1$ we have
\begin{align*}
 \Delta^l T_{s - s_j} u_j & = \sum_{i = 1}^j \Delta^l T_{s - s_i} (u_i - T_{t_i} u_{i-1}) .
\end{align*}
Observe that the results of Step~1 and the equality $u_i(x) = T_{t_i} u_{i-1}(x)$ for $x \in A_i$ give
\begin{align*}
 \sum_{i = 1}^\infty \norm{\Delta^l T_{s - s_i} (u_i - T_{t_i} u_{i-1})}_{L^\infty(\X)} & \le \sum_{i = 1}^\infty \frac{c_3(l)}{(s - s_i)^{l + n / (2 d_w)}} \norm{u_i - T_{t_i} u_{i-1}}_{L^2(\X)} \\
 & \hspace*{-11em} \le \sum_{i = 1}^\infty \frac{c_3(l)}{t_{i+1}^{l + n / (2 d_w)}} \expr{\norm{u_i - T_{t_i} u_{i-1}}_{L^2(K_i)} + \norm{u_i - T_{t_i} u_{i-1}}_{L^2(L_i)}} .
\end{align*}
Hence, by~\eqref{eq:kbound} and~\eqref{eq:lbound},
\begin{align*}
 \sum_{i = 1}^\infty \norm{\Delta^l T_{s - s_i} (u_i - T_{t_i} u_{i-1})}_{L^\infty(\X)} & \le 3 c_3(l) \sqrt{m(K_0)} \, \norm{g}_{L^\infty(\X)} \sum_{i = 1}^\infty \frac{D(2^{-i} \eps, t_i)}{t_{i+1}^{l + n / (2 d_w)}} \\
 & \le 3 c_3(l) \sqrt{m(K_0)} \, \norm{g}_{L^\infty(\X)} \, \frac{c_6(l + n / (2 d_w), \eps^{d_w} / s)}{\eps^{l d_w + n/2}} \, .
\end{align*}
It follows that the sequence $\Delta^l T_{s - s_j} u_j$ converges in $L^\infty(\X)$ as $j \to \infty$ for every $l \ge 0$. Therefore, if $f(x) = \lim_{j \to \infty} T_{s - s_j} u_j(x)$, then for all $l \ge 0$ we have $f \in \dom(\Delta^l)$ and
\begin{align*}
 \norm{\Delta^l f}_{L^\infty(\X)} & \le \sum_{i = 1}^\infty \norm{\Delta^l T_{s - s_i} (u_i - T_{t_i} u_{i-1})}_{L^\infty(\X)} \\
 & \le \frac{c_7(l, \eps^{d_w} / s)}{\eps^{l d_w + n/2}} \, \sqrt{m(K_0)} \, \norm{g}_{L^\infty(\X)} ,
\end{align*}
as desired.

\textit{Step 5.} By the definition of $u_j$, for $j \ge 1$ we have
\begin{align*}
 T_{s - s_j} u_j & = T_{s - s_j} (\ind_{K_j} T_{s_j} g + \ind_{A_j} T_{t_j} u_{j-1}) \\
 & = T_s g + T_{s - s_j} (\ind_{A_j} T_{t_j} u_{j-1} - \ind_{\X \setminus K_j} T_{s_j} g) .
\end{align*}
It follows that
\begin{align*}
 \norm{T_{s - s_j} u_j - T_s g}_{L^\infty(K)} & = \norm{T_{s - s_j} (\ind_{A_j} T_{t_j} u_{j-1} - \ind_{\X \setminus K_j} T_{s_j} g)}_{L^\infty(K)} \\
 & \le D(2^{-j} \eps, s - s_j) \norm{\ind_{A_j} T_{t_j} u_{j-1} - \ind_{\X \setminus K_j} T_{s_j} g}_{L^\infty(\X)} \\
 & \le 2 D(2^{-j} \eps, s - s_j) \norm{g}_{L^\infty(\X)} .
\end{align*}
The right hand side converges to $0$ as $j \to \infty$. Hence, $f(x) = T_s g(x) = h(x)$ for $x \in K$. Furthermore, $\|u_j\|_{L^\infty(\X)} \le \|g\|_{L^\infty(\X)}$, and therefore also $\|f\|_{L^\infty(\X)} \le \|g\|_{L^\infty(\X)}$. Finally, if $g \ge 0$, then $u_j \ge 0$ for all $j \ge 1$, and so $f \ge 0$.
\end{proof}

By choosing $g(x) = h(x) = 1$ and $s = \eps^{d_w}$, we obtain the following result.

\begin{corollary}
\label{cor:smoothcutoff}
Suppose that $K \sub \X$ is a compact set and $\eps > 0$. Then there is a function $f \in L^\infty(\X)$ such that $f(x) = 1$ for $x \in K$, $f(x) = 0$ when $\dist(x, K) \ge \eps$, and $f \in \dom(\Delta^l)$ for any $l > 0$. Furthermore, $0 \le f(x) \le 1$ for all $x \in \X$, and for all $l > 0$ we have
\begin{equation}
\label{eq:smoothcutoff}
 \norm{\Delta^l f}_{L^\infty(\X)} \le \frac{c_{\eqref{eq:smoothcutoff}} (\diam K + \eps)^{n/2}}{\eps^{l d_w + n/2}} \, ,
\end{equation}
where $c_{\eqref{eq:smoothcutoff}} = c_{\eqref{eq:smoothcutoff}}(l, Z_t)$.
\qed
\end{corollary}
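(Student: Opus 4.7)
The plan is to deduce this corollary as a direct specialization of Theorem~\ref{th:smoothbump}, feeding in the constant function $g \equiv 1$ as input. First I would check that $g \equiv 1$ lies in $L^\infty(\X)$ with $\norm{g}_{L^\infty(\X)} = 1$ and is nonnegative. Since $Z_t$ is a conservative diffusion (this is implicit in the standing assumption that $Z_t$ is a Hunt process with continuous transition density obeying the two-sided sub-Gaussian bounds~\eqref{eq:subgaussian}; conservativeness could also be recovered from those bounds together with Ahlfors regularity by integrating $T^Z_t(x, y)$ against $m(dy)$), we have $T^Z_s 1 \equiv 1$ for all $s > 0$, so that $h = T^Z_s g$ equals $1$ everywhere on $\X$, and in particular $h(x) = 1$ for every $x \in K$.

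Next I would choose the scale parameter $s = \eps^{d_w}$. With this choice, the ratio $\eps^{d_w}/s$ that appears in the constant $c_{\eqref{eq:smoothbump}}(l, \eps^{d_w}/s, Z_t)$ collapses to the fixed value~$1$, so the constant depends only on $l$ and $Z_t$. Applying Theorem~\ref{th:smoothbump} then furnishes a function $f \in L^\infty(\X)$ with $f(x) = h(x) = 1$ for $x \in K$, $f(x) = 0$ whenever $\dist(x, K) \ge \eps$, and $f \in \dom(\Delta^l)$ for every $l > 0$. The theorem also guarantees $\norm{f}_{L^\infty(\X)} \le \norm{g}_{L^\infty(\X)} = 1$ and $f \ge 0$ (because $g \ge 0$), which together yield the pointwise bound $0 \le f(x) \le 1$ asked for in the statement.

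Finally, substituting $\norm{g}_{L^\infty(\X)} = 1$ and $s = \eps^{d_w}$ into the estimate~\eqref{eq:smoothbump} produces
\begin{align*}
 \norm{\Delta^l f}_{L^\infty(\X)} & \le \frac{c_{\eqref{eq:smoothbump}}(l, 1, Z_t) \, (\diam K + \eps)^{n/2}}{\eps^{l d_w + n/2}} \, ,
\end{align*}
which is exactly~\eqref{eq:smoothcutoff} upon setting $c_{\eqref{eq:smoothcutoff}}(l, Z_t) := c_{\eqref{eq:smoothbump}}(l, 1, Z_t)$. There is no real obstacle here: the entire corollary is a one-line specialization of Theorem~\ref{th:smoothbump}. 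The only point that deserves any attention is the conservativeness of $Z_t$, since without $T^Z_s 1 = 1$ we would lose the crucial equality $f = 1$ on $K$; in the setting of Example~\ref{ex:subordinate} this is automatic from the construction of $Z_t$ as the canonical Brownian-type diffusion on the Ahlfors regular space~$\X$.
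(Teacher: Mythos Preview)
Your proposal is correct and matches the paper's own proof essentially verbatim: the paper introduces the corollary with the single line ``By choosing $g(x) = h(x) = 1$ and $s = \eps^{d_w}$, we obtain the following result,'' which is exactly your specialization of Theorem~\ref{th:smoothbump}. Your added remark on conservativeness of $Z_t$ (needed for $T^Z_s 1 = 1$) is a welcome clarification that the paper leaves implicit.
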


In general, the boundary of the set $\{x \in \X : f(x) > 0\}$ might be highly irregular. However, when we relax the smoothness hypothesis on $f$, we can require $f$ to be positive on an arbitrary given open set.

\begin{proposition}
\label{prop:smoothcutoff}
Suppose that $K \sub \X$ is a compact set, $\eps > 0$ and $L > 0$. Then there is a function $f \in L^\infty(\X)$ such that $f(x) = 1$ for $x \in K$, $f(x) = 0$ when $\dist(x, K) \ge \eps$, and $f \in \dom(\Delta^l)$ for $l = 1, 2, \ldots, L$. Furthermore, $0 \le f(x) \le 1$ for all $x \in \X$, the boundary of the set $\{x \in \X : f(x) > 0\}$ has zero $m$ measure, and for all $l = 1, 2, \ldots, L$ we have
\begin{equation}
\label{eq:smoothcutoff2}
 \norm{\Delta^l f}_{L^\infty(\X)} \le \frac{c_{\eqref{eq:smoothcutoff2}} (\diam K + \eps)^{n/2}}{\eps^{l d_w + n/2}} \, ,
\end{equation}
where $c_{\eqref{eq:smoothcutoff2}} = c_{\eqref{eq:smoothcutoff2}}(L, Z_t)$.
\end{proposition}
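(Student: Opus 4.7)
The strategy is to post-compose the infinitely smooth bump function from Corollary~\ref{cor:smoothcutoff} with a one-dimensional $C^\infty$ cut-off $\phi:\R\to[0,1]$. Composition kills the superfluous $C^\infty$-regularity (only the first $L$ iterated applications of $\Delta$ will be controlled, matching the statement), but by picking the cut-off level through a Sard--Fubini argument the boundary of the positivity set of $f=\phi\circ f_0$ becomes $m$-null.

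First, I would apply Corollary~\ref{cor:smoothcutoff} with the enlarged compact set $K'=\{x\in\X:\dist(x,K)\le\eps/3\}$ and parameter $\eps'=\eps/3$. This produces $f_0:\X\to[0,1]$ equal to $1$ on $K'$ (in particular on $K$), vanishing outside the $(2\eps/3)$-neighborhood of $K$, belonging to $\dom(\Delta^l)$ for every $l>0$, and satisfying $\|\Delta^l f_0\|_{L^\infty(\X)}\le c(l,Z_t)(\diam K+\eps)^{n/2}/\eps^{l d_w+n/2}$. Moreover $f_0$ is continuous, being the uniform limit $\lim_j T_{s-s_j}u_j$ from the construction in Theorem~\ref{th:smoothbump} of functions that are continuous thanks to joint continuity of the heat kernel $T^Z_t(x,y)$.

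Second, since the level sets $\{f_0=a\}$, $a\in(0,1)$, are pairwise disjoint subsets of the finite-$m$-measure set $\{f_0>0\}$, one has $\int_0^1 m(\{f_0=a\})\,da=0$, so $m(\{f_0=a\})=0$ for Lebesgue-a.e.\ $a\in(0,1)$. Fix such an $a\in(0,1/2)$ and pick a non-decreasing $\phi\in C^\infty(\R;[0,1])$ with $\phi\equiv 0$ on $(-\infty,a]$, $\phi\equiv 1$ on $[(1+a)/2,\infty)$, and $\|\phi^{(k)}\|_\infty$ bounded by a constant depending only on $L$ for $k\le 2L$. Set $f=\phi\circ f_0$. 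Then $f=1$ on $K$, $f=0$ whenever $\dist(\cdot,K)\ge\eps$, and $0\le f\le 1$. The set $\{f>0\}=\{f_0>a\}$ is open (by continuity of $f_0$) and its topological boundary is contained in $\{f_0=a\}$, a set of $m$-measure zero by the choice of $a$.

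Third and most substantially, I would establish $f\in\dom(\Delta^l)$ for $l=1,\ldots,L$ together with the estimate~\eqref{eq:smoothcutoff2}. Since $Z_t$ is a diffusion, the chain rule $\Delta(\phi\circ f_0)=\phi'(f_0)\,\Delta f_0+\phi''(f_0)\,\Gamma(f_0,f_0)$ holds, where $\Gamma(g,g)=\tfrac{1}{2}\Delta(g^2)-g\,\Delta g$ is the carr\'e du champ; combined with the Leibniz identity $\Delta(gh)=g\,\Delta h+h\,\Delta g+2\Gamma(g,h)$ and induction on $l$, this expresses $\Delta^l f$ as a finite linear combination of $\phi^{(k)}(f_0)$ ($k\le 2l$) times monomials in $\Delta^{j_1}f_0,\ldots,\Delta^{j_r}f_0$ and in $\Gamma$-factors of total order at most $l$. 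Each $\Gamma$-term is controlled by the Corollary~\ref{cor:smoothcutoff} bounds applied to $f_0$ and to $f_0^2$ (itself $\psi\circ f_0$ with $\psi(t)=t^2$, so the same construction applies), while the factors $\phi^{(k)}(f_0)$ contribute $\|\phi^{(k)}\|_\infty\le C(L)$. Collecting the estimates yields $\|\Delta^l f\|_{L^\infty(\X)}\le c(L,Z_t)(\diam K+\eps)^{n/2}/\eps^{l d_w+n/2}$ for every $l\le L$. The main obstacle is precisely this iterated chain-rule bookkeeping on a state space which, for fractals, admits no pointwise gradient: one must rely on the strongly local carr\'e du champ and a Fa\`a di Bruno-type expansion, and track carefully how each iteration of $\Delta$ contributes the factor $\eps^{-d_w}$ needed to reach the scale-free bound~\eqref{eq:smoothcutoff2}. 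The remaining ingredients---continuity of $f_0$, the Sard--Fubini choice of level, and the $m$-null boundary---are essentially immediate once the right $f_0$ and $\phi$ have been fixed.
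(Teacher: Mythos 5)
Your choice of level $a$ by the Fubini argument and the inclusion $\partial\{f_0>a\}\sub\{f_0=a\}$ are fine, and would indeed give a null boundary \emph{if} the composition $f=\phi\circ f_0$ stayed in $\dom(\Delta^l)$. That is where the proposal breaks down. The chain rule $\Delta(\phi\circ f_0)=\phi'(f_0)\,\Delta f_0+\phi''(f_0)\,\Gamma(f_0,f_0)$, with $\Gamma(f_0,f_0)$ a bounded function, is simply not available on the state spaces this appendix is written for. On the Sierpi\'nski gasket (and more general fractals with $d_w>2$, which are the main targets of Example~\ref{ex:subordinate}), the energy measure $\Gamma(f_0,f_0)$ of a function in the domain of the Laplacian is in general \emph{singular} with respect to the reference measure $m$ (Kusuoka's theorem), and $\dom(\Delta)$ is not an algebra: already $f_0^2\notin\dom(\Delta)$ for non-harmonic $f_0$ (Ben-Bassat--Strichartz--Teplyaev). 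So for a genuinely nonlinear $\phi$ the function $\phi\circ f_0$ typically fails to lie in $\dom(\Delta)$ at all, let alone in $\dom(\Delta^L)$ with the scale-correct bound \eqref{eq:smoothcutoff2}. Your side remark that $f_0^2=\psi\circ f_0$ ``so the same construction applies'' is circular for the same reason: Corollary~\ref{cor:smoothcutoff} produces bump functions, not squares of prescribed elements of $\dom(\Delta)$. This is precisely why the whole appendix manufactures smoothness through the heat semigroup rather than through composition with smooth real functions.

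The paper's proof sidesteps every nonlinear operation. It first fixes an open set $V$ with $\{f_0>0\}\sub V\sub\{x:\dist(x,K)<2\eps\}$ and $m(\partial V)=0$ (e.g.\ $V=\{\dist(\cdot,K)<r\}$ for a good radius $r$), then covers $V\cap\{f_0<1/2\}$ by balls $B_j'$, takes the bump functions $f_j$ of Corollary~\ref{cor:smoothcutoff} equal to $1$ on $B_j'$ and supported in the doubled balls $B_j\sub V\cap\{f_0<1/2\}$, and sets $f=f_0+\sum_i\eps_i f_i$ with $\eps_i$ small enough that $\sum_i\eps_i\norm{\Delta^l f_i}_{L^\infty(\X)}<\tfrac12\norm{\Delta^l f_0}_{L^\infty(\X)}$ for $l=0,1,\dots,L$. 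Then $\{f>0\}=V$ by construction, the series converges in each graph norm because $\Delta^l$ is closed and linear, and the bound \eqref{eq:smoothcutoff2} follows by the triangle inequality. If you want to salvage your argument you would have to replace the composition step by some such purely additive device; as written, the third step cannot be repaired on fractal state spaces.
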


\begin{proof}
Let $f_0$ be the function constructed in Theorem~\ref{th:smoothbump} for $h(x) = g(x) = 1$, and denote by $V$ an arbitrary open set with the following properties: $\{x \in \X : f(x) > 0\} \sub V \sub \{x \in \X : \dist(x, K) < 2\eps\}$, and $m(\partial V) = 0$. For example, one can take $V = \{x \in \X : \dist(x, K) < r\}$ for a suitable $r \in (\eps, 2 \eps)$.

Let $B_j$, $j = 1, 2, \ldots$, be a family of balls contained in $V \cap \{x \in \X : f_0(x) < 1/2\}$ such that twice smaller balls $B_j'$ form a countable covering of $V \cap \{x \in \X : f_0(x) < 1/2\}$, and let $f_j$ be the function as in Corollary~\ref{cor:smoothcutoff}, equal to $1$ on $B_j'$ and vanishing on $\X \setminus B_j$. Finally, choose $\eps_j > 0$ so that for $l = 0, 1, \ldots, L$,
\begin{align*}
 \sum_{i = 1}^\infty \eps_i \norm{\Delta^l f_i}_{L^\infty(\X)} < \frac{1}{2} \, \norm{\Delta^l f_0}_{L^\infty(\X)} .
\end{align*}
Then $f = f_0 + \sum_{i = 1}^\infty \eps_i f_i$ has all the desired properties, with $\eps$ replaced by $2 \eps$.
\end{proof}

\begin{corollary}
\label{cor:smoothbump}
Assumption~\ref{assu:gen} holds with $\alpha$-stable scaling.
\end{corollary}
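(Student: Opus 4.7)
The plan is to apply Proposition~\ref{prop:smoothcutoff} to produce the required Urysohn bump function and then transfer its bound on $\Delta f$ into a bound on $\A f$ via the Bochner--Phillips subordination formula. Since $X_t$ is a subordinate symmetric diffusion it is itself symmetric, so $\hat{X}_t = X_t$ and $\hat{\A}=\A$; it therefore suffices to control $\A f$.

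Given compact $K$ and open $D$ with $K \sub D$, I will set $\eps = \tfrac{1}{2} \dist(K, \X \setminus D) > 0$ and apply Proposition~\ref{prop:smoothcutoff} with these data and $L = 1$. This provides a continuous, compactly supported $f$ with $0 \le f \le 1$, $f = 1$ on $K$, $f = 0$ outside the $\eps$-neighborhood of $K$, $m(\partial\set{f > 0}) = 0$, and
\[
 \norm{\Delta f}_{L^\infty(\X)} \le c\,(\diam K + \eps)^{n/2}\, \eps^{-d_w - n/2}.
\]
Next, because the Laplace exponent of the subordinator is $\lambda^{\alpha/d_w}$ with $\alpha/d_w \in (0,1)$, I will invoke the classical Phillips subordination formula, which in our situation reads
\[
 \A f(x) = \int_0^\infty \bigl(T^Z_t f(x) - f(x)\bigr)\, \mu(dt), \quad \mu(dt) = \frac{\alpha/d_w}{\Gamma(1 - \alpha/d_w)}\, t^{-1-\alpha/d_w}\, dt,
\]
valid for $f \in \dom(\Delta)$, and giving both $f \in \dom(\A)$ and $\A f \in C_0(\X)$.

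To bound $\A f$, I will split the integral at $t_0 = \eps^{d_w}$. For $t \le t_0$ I will use $|T^Z_t f - f| \le t\,\norm{\Delta f}_{L^\infty(\X)}$ (itself from $T^Z_t f - f = \int_0^t \Delta T^Z_s f\, ds$), and for $t > t_0$ the trivial bound $|T^Z_t f - f| \le 2\norm{f}_{L^\infty(\X)} \le 2$. Elementary integration of $t^{-\alpha/d_w}$ on $(0,t_0)$ and $t^{-1-\alpha/d_w}$ on $(t_0,\infty)$ then yields
\[
 \norm{\A f}_{L^\infty(\X)} \le c\,\norm{\Delta f}_{L^\infty(\X)}\, \eps^{d_w - \alpha} + c\, \eps^{-\alpha} \le c\,(\diam K + \eps)^{n/2}\, \eps^{-\alpha - n/2} + c\,\eps^{-\alpha}.
\]

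Finally, I will verify the scale-invariant bounds required by property~\itref{it:gen}. For $K = \overline{B}(x_0, r) \sub B(x_0, R) = D$, taking $\eps = (R-r)/2$ gives $\diam K + \eps \le cR$ and $\eps \asymp R(1 - r/R)$, so both summands above are dominated by $c(r/R)\, R^{-\alpha}$. For the annular case $K = \overline{A}(x_0, p, R) \sub A(x_0, r, \tilde{r}) = D$, I choose $\eps = \tfrac12 \min(p-r, \tilde{r} - R)$ and use $\diam K \le 2R$; the parallel computation yields the required scale-invariant bound $c(r/R, p/R, R/\tilde{r})\, R^{-\alpha}$. I expect the main technical hurdle to be the verification that the $f$ produced by Proposition~\ref{prop:smoothcutoff} — originally an $L^\infty$-domain object — actually lies in the Feller domain of $\Delta$, so that subordination genuinely yields $\A f \in C_0(\X)$. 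This should follow because $f$ is continuous with compact support and $\Delta f$ is also continuous (being expressed through $T^Z_t$ applied to continuous data, with the continuity of transition densities ensured by the sub-Gaussian bound); consequently the $L^\infty$ and Feller generators of $T^Z_t$ agree on $f$.
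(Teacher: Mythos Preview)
Your proof is correct and follows essentially the same route as the paper: apply Proposition~\ref{prop:smoothcutoff} with $L=1$, then use the subordination formula to pass from a bound on $\|\Delta f\|_{L^\infty}$ to one on $\|\A f\|_{L^\infty}$, and specialize to balls and annuli for the scaling. The only cosmetic difference is that the paper replaces your explicit splitting at $t_0=\eps^{d_w}$ by the inequality $\min(\lambda s,2)\le c(1-e^{-\lambda s})$ and the identity $\int_0^\infty(1-e^{-\lambda s})\,\nu_\eta(s)\,ds=\lambda^{\alpha/d_w}$, yielding the slightly cleaner bound $\|\A f\|_{L^\infty}\le c\,\|\Delta f\|_{L^\infty}^{\alpha/d_w}$; your attention to the $L^\infty$-versus-Feller domain issue is in fact more careful than the paper's.
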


\begin{proof}
Given any compact subset $K$ of an open set $D \sub \X$, choose $\eps > 0$ such that $\dist(\X \setminus D, K) \ge \eps$. Since $\dom(\Delta) \sub \dom(\A)$, the function $f$ given in Proposition~\ref{prop:smoothcutoff} (for $L = 1$) satisfies all conditions of Assumption~\ref{assu:gen}. Furthermore, if $\nu_\eta(s) ds$ is the L{\'e}vy measure of the subordinator $\eta_t$, then
\begin{align*}
 \norm{\A f}_{L^\infty(\X)} & = \norm{\int_0^\infty (T^Z_s f - f) \nu_\eta(s) ds}_{L^\infty(\X)} \le \int_0^\infty \norm{T^Z_s f - f}_{L^\infty(\X)} \nu_\eta(s) ds \\
 & \le \int_0^\infty \min\expr{s \norm{\Delta f}_{L^\infty(\X)}, 2 \norm{f}_{L^\infty(\X)}} \nu_\eta(s) ds .
\end{align*}
Note that $\|f\|_{L^\infty(\X)} = 1$. Furthermore, $\min(\lambda s, 2) \le c_1 (1 - e^{-\lambda s})$ (with $c_1 = 2 e^2 / (e^2 - 1)$), and
\begin{align*}
 \int_0^\infty (1 - e^{-\lambda s}) \nu_\eta(s) ds = \lambda^{\alpha / d_w} .
\end{align*}
Therefore,
\begin{align*}
 \norm{\A f}_{L^\infty(\X)} & \le c_1 \expr{\norm{\Delta f}_{L^\infty(\X)}}^{\alpha / d_w} .
\end{align*}
Let $0 < r < R$, and take $K = \overline{B}(x_0, r)$, $D = B(x_0, R)$, $\eps = R - r$. We see that
\begin{align*}
 \norm{\A f}_{L^\infty(\X)} & \le c_1 \expr{\frac{c_{\eqref{eq:smoothcutoff2}}(1, Z_t) (2 R)^{n/2}}{(R - r)^{d_w + n/2}}}^{\alpha / d_w} = c_2(r/R, Z_t) R^{-\alpha} .
\end{align*}
This gives half of the $\alpha$-stable scaling property~\ref{it:gen}, and the other half is proved in a similar manner.
\end{proof}

%
%

\subsection*{Acknowledgments}

We express our gratitude to Moritz Ka{\ss}mann for many discussions on the subject of supremum bounds for subharmonic functions. We thank Tomasz Grzywny for pointing out errors in the preliminary version of the article. We also thank Luke Rogers for providing us with a new proof of Theorem~2.2 in~\cite{MR2465816}. We thank the referee for insightful suggestions.

%
%

\def\cprime{$'$}

%
%

\end{document}